\documentclass[11pt,reqno]{amsart}
\usepackage{amssymb,amscd,amsfonts,amsbsy,enumerate}
\usepackage{latexsym}
\usepackage{exscale}
\usepackage{amsmath,amsthm,amsfonts}
\usepackage{mathrsfs}
\usepackage{epsf,epsfig}

\usepackage[colorlinks=true,linkcolor=blue,citecolor=red,urlcolor=red]{hyperref} % Add coloured links

\usepackage{esint} % For \fint symbol

\newcommand{\meanint}{{\int{\mkern-19mu}-}}

\usepackage{color}

\numberwithin{equation}{section}

\def\Xint#1{\mathchoice
   {\XXint\displaystyle\textstyle{#1}}%
   {\XXint\textstyle\scriptstyle{#1}}%
   {\XXint\scriptstyle\scriptscriptstyle{#1}}%
   {\XXint\scriptscriptstyle\scriptscriptstyle{#1}}%
   \!\int}
\def\XXint#1#2#3{{\setbox0=\hbox{$#1{#2#3}{\int}$}
     \vcenter{\hbox{$#2#3$}}\kern-.5\wd0}}
\def\aver#1{\Xint-_{#1}}

\allowdisplaybreaks

\newtheorem{theorem}{Theorem}[section]
\newtheorem{lemma}[theorem]{Lemma}
\newtheorem{corollary}[theorem]{Corollary}
\newtheorem{proposition}[theorem]{Proposition}
\newtheorem{definition}[theorem]{Definition}

\theoremstyle{remark}
\newtheorem{remark}[theorem]{Remark}

\begin{document}
\allowdisplaybreaks

\title[Green Function]
{The Green Function for Elliptic Systems in the Upper-Half Space}

\author{Martin Dindo\v{s}}
\address{Martin Dindo\v{s}
\\
School of Mathematics, 
\\
The University of Edinburgh and 
\\
Maxwell Institute of Mathematical Sciences, UK} \email{M.Dindos@ed.ac.uk}

\author{Dorina Mitrea}
\address{Dorina Mitrea
\\
Department of Mathematics
\\
Baylor University
\\
Sid Richardson Bldg., 1410 S.~4th Street
\\
Waco, TX 76706, USA} \email{Dorina\_\,Mitrea@baylor.edu}

\author{Irina Mitrea}
\address{Irina Mitrea
\\
Department of Mathematics
\\
Temple University\!
\\
1805\,N.\,Broad\,Street
\\
Philadelphia, PA 19122, USA} \email{imitrea@temple.edu}

\author{Marius Mitrea\\ }
\address{Marius Mitrea
\\
Department of Mathematics
\\
Baylor University
\\
Sid Richardson Bldg., 1410 S.~4th Street
\\
Waco, TX 76706, USA} \email{Marius\_\,Mitrea@baylor.edu}

\thanks{The first author has been supported in part by EPSRC grant EP/Y033078/1.
The second author has been supported in part by Simons Foundation grant $\#\,$958374. The third author 
has been supported in part by Simons Foundation grant $\#\,$00002820.
The fourth author has been supported in part by the Simons Foundation grant $\#\,$637481. 
This work has been possible thanks to the support and hospitality
of \textit{Temple University} (USA) and \textit{Baylor University} (USA).
The authors express their gratitude to these institutions.}

%\date{October 11, 2024} %\textit{Revised}: \today}

\subjclass[2010]{Primary: 31A20, 35C15, 35J57, 42B37. Secondary: 42B25.}

\keywords{Green function, second-order elliptic system, Poisson kernel,
nontangential maximal function.}

\begin{abstract}
Let $L$ be a second-order, homogeneous, constant (complex) coefficient elliptic system in 
${\mathbb{R}}^n$. The goal of this article is provide a qualitative and quantitative study 
of the nature of the Green function associated with the system $L$ in the upper-half space. 
Starting with a definition of the Green function which brings forth the minimal features
which identify this object uniquely, we establish optimal nontangential maximal function estimates
and regularity results up to the boundary for the said Green function. The main tools employed in the 
proof include the Agmon-Douglis-Nirenberg construction of a Poisson kernel for the system $L$, 
the Agmon-Douglis-Nirenberg a priori regularity estimates near the boundary, and the brand of Divergence Theorem 
from \cite{GHA.I} in which the boundary trace of the corresponding vector field is taken in 
nontangential pointwise sense.
\vskip -0.40in \null
\end{abstract}

\maketitle

\allowdisplaybreaks

\section{Introduction}
\setcounter{equation}{0}
\label{S-1}

Fix a space dimension $n\in{\mathbb{N}}$ and, unless otherwise stated, assume $n\geq 2$.
Pick an integer $M\in{\mathbb{N}}$ and consider the second-order, homogeneous, $M\times M$ system,
with constant complex coefficients, written (with the usual convention of summation over
repeated indices in place) as
\begin{equation}\label{L-def}
L:=\Big(a^{\alpha\beta}_{rs}\partial_r\partial_s\Big)_{1\leq\alpha,\beta\leq M},
\end{equation}
naturally acting on vector-valued (or matrix-valued) distributions defined in (open subsets of) ${\mathbb{R}}^n$. 
Examples to keep in mind are the Laplacian and the Lam\'e system. Typically, two types of ellipticity conditions 
are considered. First, we shall say that $L$ expressed as in \eqref{L-def} is {\tt strongly} {\tt elliptic} provided 
there exists a real number $c>0$ such that the following Legendre-Hadamard condition is satisfied:
\begin{equation}\label{L-ell.X}
\begin{array}{c}
{\rm Re}\,\big[a^{\alpha\beta}_{rs}\xi_r\xi_s\overline{\eta_\alpha}
\eta_\beta\,\big]\geq c|\xi|^2|\eta|^2\,\,\text{ for every}
\\[8pt]
\xi=(\xi_r)_{1\leq r\leq n}\in{\mathbb{R}}^n\,\,\text{ and }\,\,
\eta=(\eta_\alpha)_{1\leq\alpha\leq M}\in{\mathbb{C}}^M.
\end{array}
\end{equation}
Second, we shall say that $L$ is {\tt weakly} {\tt elliptic} if  
\begin{equation}\label{Def-ELLa.INTR}
{\rm det}\Big[\big(a^{\alpha\beta}_{rs}\xi_r\xi_s\big)_{1\leq\alpha,\beta\leq M}\Big]\not=0
\,\,\text{ for every }\,\,\xi=(\xi_r)_{1\leq r\leq n}\in{\mathbb{R}}^n\setminus\{0\}.
\end{equation}
Obviously, any strongly elliptic system is weakly elliptic. For example, considering the one-parameter family of 
scalar differential operators in ${\mathbb{R}}^n$ given by 
\begin{equation}\label{eq:LLLambda}
L_\lambda:=\partial_1^2+\cdots+\partial_{n-1}^2+\lambda\partial_n^2,\qquad\lambda\in{\mathbb{C}},
\end{equation}
then $L_\lambda$ satisfies the Legendre-Hadamard ellipticity condition \eqref{L-ell.X} if and only if ${\rm Re}\,\lambda>0$, 
whereas $L_\lambda$ is weakly elliptic if and only if $\lambda\in{\mathbb{C}}\setminus(-\infty,0]$. For more on this and related 
topics the reader is referred to \cite{GHA.III}.

As is known from the classical work of S.\,Agmon, A.\,Douglis, and L.\,Nirenberg in \cite{ADNI}-\cite{ADNII} 
(cf. also \cite{Lop}, \cite{Shap}, \cite{Sol1}, \cite{Sol2}, \cite[\S{10.3}]{KMR2}, and the discussion in \cite[p.\,24]{KM2012}),
every operator $L$ as in \eqref{L-def}-\eqref{L-ell.X} has a Poisson kernel, denoted by $P^L$ 
(an object whose properties mirror the most basic characteristics of the classical harmonic Poisson kernel). 
For details, see Theorem~\ref{ya-T4-fav} below. 

Our main result is Theorem~\ref{ta.av-GGG.2A} elaborating on the nature of the Green function associated 
with a given elliptic system. Prior to formulating this result, some comments on the notation used
are in order. Throughout, ${\mathbb{R}}^n_{+}:=\{(x',x_n)\in{\mathbb{R}}^{n-1}\times{\mathbb{R}}:\,x_n>0\}$ will 
denote the upper-half space. Given a set $K\subseteq{\mathbb{R}}^n_{+}$ along with a function $u$ defined in 
${\mathbb{R}}^n_{+}\setminus K$, by ${\mathcal{N}}^{\,{\mathbb{R}}^n_{+}\setminus K}_\kappa u$ we shall 
denote the nontangential maximal function of $u$ with aperture $\kappa$; see \eqref{NT-Fct.23} for a precise 
definition. Next, by $u\big|^{{}^{\kappa-{\rm n.t.}}}_{\partial{\mathbb{R}}^n_{+}}$ we denote the 
($\kappa$-)nontangential limit of the given function $u$ on the boundary of the upper half-space 
(canonically identified with ${\mathbb{R}}^{n-1}$), as defined in \eqref{nkc-EE-2}. Given any 
$n\in{\mathbb{N}}$, the Lebesgue measure in ${\mathbb{R}}^n$ will be denoted by ${\mathscr{L}}^n$. 
Also, we shall denote by $\mathcal{D}'({\mathbb{R}}^n_{+})$ the space of distributions in ${\mathbb{R}}^n_{+}$.
Finally, we agree to abbreviate ${\rm diag}:=\{(x,x):\,x\in{\mathbb{R}}^n_{+}\}$ for the diagonal in 
the Cartesian product ${\mathbb{R}}^n_{+}\times{\mathbb{R}}^n_{+}$.

To set the stage, we make the following definition.

\begin{definition}\label{ta.av-GGG}
Fix $n,M\in{\mathbb{N}}$ with $n\geq 2$.
Let $L$ be an $M\times M$ system with constant complex coefficients as in \eqref{L-def} and \eqref{Def-ELLa.INTR}. 
Call $G^L(\cdot,\cdot):{\mathbb{R}}^n_{+}\times{\mathbb{R}}^n_{+}\setminus{\rm diag}\to{\mathbb{C}}^{M\times M}$ 
a {\tt Green} {\tt function} for $L$ in ${\mathbb{R}}^n_{+}$ provided for each $y=(y',y_n)\in\mathbb{R}^n_{+}$ 
the following properties hold {\rm (}for some aperture parameter $\kappa>0${\rm )}:
\begin{align}\label{GHCewd-22.RRe}
& G^L(\cdot\,,y)\in\big[L^1_{\rm loc}({\mathbb{R}}^n_{+})\big]^{M\times M},
\\[4pt]
& G^L(\cdot\,,y)\big|^{{}^{\kappa-{\rm n.t.}}}_{\partial{\mathbb{R}}^n_{+}}=0
\,\,\text{ at ${\mathscr{L}}^{n-1}$-a.e. point in }\,\,{\mathbb{R}}^{n-1}\equiv\partial{\mathbb{R}}^n_{+},
\label{GHCewd-24.RRe}
\\[4pt]
& \int_{{\mathbb{R}}^{n-1}}\Big(
{\mathcal{N}}^{\,{\mathbb{R}}^n_{+}\setminus\overline{B(y,y_n/2)}}_{\kappa}\,G^L(\cdot\,,y)\Big)(x')\frac{dx'}{1+|x'|^{n-1}}<+\infty,
\label{GHCewd-25.RRe}
\\[4pt]
& L\big[G^L(\cdot\,,y)\big]=\delta_{y}\,I_{M\times M}\,\,\text{ in }\,\,\big[{\mathcal{D}}'({\mathbb{R}}^n_{+})\big]^{M\times M},
\label{GHCewd-23.RRe}
\end{align}
where the $M\times M$ system $L$ acts in the ``dot" variable on the columns of $G$, 
and $\delta_y$ denotes Dirac's distribution with mass at $y$.  
\end{definition}

A few comments pertaining to the nature of Definition~\ref{ta.av-GGG} are appropriate. 
We remark that \eqref{GHCewd-22.RRe} and \eqref{GHCewd-23.RRe} simply amount to saying that the 
matrix-valued function $G(\cdot,y)$ is a fundamental solution for the system $L$ the upper-half space 
which is adapted to this domain, in the sense that it has a vanishing boundary trace 
(considered in nontangential sense, in the ``dot" variable; cf. \eqref{GHCewd-24.RRe}).
As a byproduct, elliptic regularity guarantees that 
\begin{equation}\label{GHCvCbN.2T}
G(\cdot\,,y)\in\big[{\mathscr{C}}^\infty({\mathbb{R}}^n_{+}\setminus\{y\})\big]^{M\times M}
\,\,\text{ for each }\,\,y\in\mathbb{R}^n_{+}.
\end{equation}
This being said, $G(\cdot\,,y)$ has a singularity at the pole $y$ which, in particular, 
explains why the nontangential maximal function in \eqref{GHCewd-25.RRe} is restricted over 
${\mathbb{R}}^n_{+}\setminus\overline{B(y,y_n/2)}$, a region which avoids the singular point $y$. 
Of course, instead of this region we could have considered (with the same effect) 
${\mathbb{R}}^n_{+}\setminus K$ for any compact subset $K$ of ${\mathbb{R}}^n_{+}$ whose interior, 
denoted by $\mathring{K}$, contains the pole $y$. 

Since for each $\varepsilon>0$ there exists $C=C(n,\kappa,\varepsilon,y)\in(0,\infty)$ such that (cf. \eqref{UUNNpkjgr}) 
\begin{equation}\label{GHCvCbN.2T.D+M}
\Big({\mathcal{N}}^{\,{\mathbb{R}}^n_{+}\setminus\overline{B(y,y_n/2)}}_{\kappa}(1+|\cdot|)^{-\varepsilon}\Big)(x')
\leq C(1+|x'|)^{-\varepsilon}\,\,\text{ for all }\,\,x'\in{\mathbb{R}}^{n-1},
\end{equation}
it follows that the finiteness condition in \eqref{GHCewd-25.RRe} is automatically satisfied if, 
for each fixed pole $y\in{\mathbb{R}}^n_{+}$, 
\begin{equation}\label{GHCvCbN.2T.D+M.2}
\parbox{8.70cm}{$G^L(\cdot\,,y)$ is bounded away from the pole and there exists $\varepsilon>0$ so that
$G^L(x,y)=O\big(|x|^{-\varepsilon}\big)$ as $|x|\to\infty$.}
\end{equation}

Properties \eqref{GHCewd-22.RRe}-\eqref{GHCewd-24.RRe} and \eqref{GHCewd-23.RRe} are staple features of any notion 
of Green function, but these alone do not identify said Green function uniquely. For example, adding any 
constant multiple of $x_n$ to such a Green function does not affect properties \eqref{GHCewd-22.RRe}-\eqref{GHCewd-24.RRe} 
and \eqref{GHCewd-23.RRe}. We have imposed the finiteness condition in \eqref{GHCewd-25.RRe} in order to (hopefully!) ensure uniqueness.
That the conditions stipulated in Definition~\ref{ta.av-GGG} indeed identify the Green function uniquely
makes the object of our main result, stated in Theorem~\ref{ta.av-GGG.2A} below. In addition to existence 
and uniqueness, this elaborates on a wealth of other basic properties of our brand of Green function. 
Among other things, it is shown that our integral finiteness condition formulated in \eqref{GHCewd-25.RRe} 
self-improves to the pointwise properties stated in \eqref{GHCvCbN.2T.D+M.2}, at least if $n\geq 3$; see \eqref{mainest3G.LLL}. 

Before stating the aforementioned theorem, we make two conventions regarding notation. First, given a generic function $G(\cdot,\cdot)$ 
of two vector variables, $(x,y)\in{\mathbb{R}}^n_{+}\times{\mathbb{R}}^n_{+}\setminus{\rm diag}$, for each $k\in\{1,\dots,n\}$ 
we agree to write $\partial_{X_k} G$ and $\partial_{Y_k} G$, respectively, for the partial derivative 
of $G$ with respect to $x_k$, and $y_k$. This convention may be iterated, lending a natural meaning to 
$\partial^\alpha_X\partial^\beta_Y G$, for each pair of multi-indices $\alpha,\beta\in{\mathbb{N}}_0^n$. 
Second, we shall interpret $\nabla_X G$ and $\nabla_Y G$ as the gradients of $G(x,y)$ with respect to 
$x$ and $y$, respectively. Lastly, as usual, we set $\log_{+}t:=\max\big\{0\,,\,\ln t\big\}$ for each $t\in(0,\infty)$.

\begin{theorem}\label{ta.av-GGG.2A}
Fix $n,M\in{\mathbb{N}}$ with $n\geq 2$. Assume $L$ is an $M\times M$ system with constant complex coefficients 
as in \eqref{L-def}-\eqref{L-ell.X}. Then there exists a unique Green function $G^L(\cdot,\cdot)$ for $L$ in 
$\mathbb{R}^n_{+}$, in the sense of Definition~\ref{ta.av-GGG}. Moreover, this Green function also satisfies 
the following additional properties:

\begin{enumerate}
\item[(1)] Given $\kappa>0$, for each $y\in{\mathbb{R}}^n_{+}$ and each compact subset 
$K$ of ${\mathbb{R}}^n_{+}$ with $y\in\mathring{K}$ there exists a finite constant $C_{y,K}>0$ 
such that for every $x'\in\mathbb{R}^{n-1}$ there holds
\begin{equation}\label{bound-NK-G}
\Big(\mathcal{N}^{^{\,{\mathbb{R}}^n_{+}\setminus K}}_\kappa G^L(\cdot,y)\Big)(x')
\leq C_{y,K}\Big(\frac{1+\log_{+}|x'|}{1+|x'|^{n-1}}\Big).
\end{equation}
Moreover, for any multi-indices $\alpha,\beta\in\mathbb{N}_0^n$ such that
$|\alpha|+|\beta|>0$, there exists some constant $C_{y,K}\in(0,\infty)$ such that
\begin{equation}\label{bouMNN}
\Big(\mathcal{N}^{\,{\mathbb{R}}^n_{+}\setminus K}_\kappa
(\partial^\alpha_X \partial^\beta_Y G^L)(\cdot,y)\Big)(x')\leq\frac{C_{y,K}}{1+|x'|^{n-2+|\alpha|+|\beta|}}.
\end{equation}
In particular,
\begin{equation}\label{GHCewd-22.RRe.4}
{\mathcal{N}}_\kappa^{\,{\mathbb{R}}^n_{+}\setminus K}(\partial_X^\alpha\partial_Y^\beta G^L)(\cdot\,,y)
\in\bigcap_{1<p\leq\infty}L^p({\mathbb{R}}^{n-1}),\qquad\forall\,\alpha,\beta\in{\mathbb{N}}_0^n.
\end{equation}
\item[(2)] For each fixed $y\in{\mathbb{R}}^n_{+}$ there holds
\begin{equation}\label{Fvabbb-7tF}
G^L(\cdot\,,y)\in\big[{\mathscr{C}}^\infty\big(\overline{{\mathbb{R}}^n_{+}}\setminus B(y,\varepsilon)\big)
\big]^{M\times M}\,\,\text{ for every }\,\,\varepsilon>0.
\end{equation}
\item[(3)] For each $\alpha,\beta\in\mathbb{N}_0^n$ the function 
$\partial^\alpha_X\partial^\beta_Y G^L$ is translation invariant 
in the tangential variables, in the sense that
\begin{equation}\label{JKBvc-ut4}
\begin{array}{c}
\big(\partial^\alpha_X\partial^\beta_Y G^L\big)\big(x-(z',0),y-(z',0)\big)
=\big(\partial^\alpha_X\partial^\beta_Y G^L\big)(x,y)
\\[8pt]
\text{for each }\,\,(x,y)\in{\mathbb{R}}^n_{+}\times{\mathbb{R}}^n_{+}
\setminus{\rm diag}\,\,\text{ and }\,\,z'\in{\mathbb{R}}^{n-1},
\end{array}
\end{equation}
and is positive homogeneous, in the sense that
\begin{equation}\label{mainest3G.LLL}
\begin{array}{c}
\big(\partial^\alpha_X\partial^\beta_Y G^L\big)(\lambda x,\lambda y)
=\lambda^{2-n-|\alpha|-|\beta|}\big(\partial^\alpha_X\partial^\beta_Y G^L\big)(x,y)
\\[6pt]
\text{for each }\,\,x,y\in{\mathbb{R}}^n_{+}\,\,\text{ with }\,\,x\not=y
\,\,\text{ and }\,\,\lambda\in(0,\infty),
\\[6pt]
\text{provided either $n\geq 3$, or $|\alpha|+|\beta|>0$}.
\end{array}
\end{equation}
\item[(4)] With $W^{k,p}$ denoting the $L^p$-based Sobolev space or order $k$ 
and with ${\rm Tr}$ denoting the Sobolev trace on $\partial{\mathbb{R}}^n_{+}$
{\rm (}cf. \eqref{Veri-S2TG.3}-\eqref{Veri-S2TG.5}{\rm )}, one has
\begin{equation}\label{Aivb-gVV}
\begin{array}{c}
G^L(\cdot\,,y)\in\bigcap\limits_{k\in{\mathbb{N}}}
\bigcap\limits_{\frac{n}{n-1}<p<\infty}\big[W^{k,p}({\mathbb{R}}^n_{+}\setminus K)\big]^{M\times M}
\,\,\text{ and }\,\,{\rm Tr}\,\big[G^L(\cdot\,,y)\big]=0,
\\[10pt]
\text{for every $y\in{\mathbb{R}}^n_{+}$ and every compact 
$K\subset{\mathbb{R}}^n_{+}$ with $y\in\mathring{K}$}.
\end{array}
\end{equation}
\item[(5)] If $G^{L^\top}\!\!(\cdot,\cdot)$ denotes the {\rm (}unique, by the first claim in the statement{\rm )} 
Green function for $L^\top$ in ${\mathbb{R}}^n_{+}$ then 
\begin{equation}\label{GHCewd-22.RRe.5}
G^L(x,y)=\Big[G^{L^\top}\!\!(y,x)\Big]^\top,\qquad\forall\,(x,y)\in
{\mathbb{R}}^n_{+}\times{\mathbb{R}}^n_{+}\setminus{\rm diag}.
\end{equation}
Hence, as a consequence of \eqref{GHCewd-22.RRe.5}, \eqref{GHCewd-24.RRe}, and
\eqref{Fvabbb-7tF}, for each fixed $x\in{\mathbb{R}}^n_{+}$ and $\varepsilon>0$, 
\begin{equation}\label{ghagUGDS}
G^L(x,\cdot)\in\big[{\mathscr{C}}^\infty\big(\overline{{\mathbb{R}}^n_{+}}\setminus B(x,\varepsilon)\big)\big]^{M\times M}
\,\,\text{ and }\,\,G^L(x,\cdot)\Big|_{\partial{\mathbb{R}}^n_{+}}=0
\,\,\text{ on }\,\,{\mathbb{R}}^{n-1}.
\end{equation}
\item[(6)] If $E^L$ denotes the fundamental solution of $L$ 
from Theorem~\ref{FS-prop}, then the matrix-valued function
\begin{equation}\label{defRRR}
R_L(x,y):=E^L(x-y)-G^L(x,y),\qquad\forall\,(x,y)\in{\mathbb{R}}^n_{+}\times{\mathbb{R}}^n_{+}\setminus{\rm diag},
\end{equation}
extends to a function 
\begin{equation}\label{defRRR-smooth}
R_L(\cdot,\cdot)\in\big[{\mathscr{C}}^\infty\big({\mathbb{R}}^n_{+}\times{\mathbb{R}}^n_{+}\big)\big]^{M\times M}
\end{equation}
which satisfies the following estimate: for any 
multi-indices $\alpha,\beta\in\mathbb{N}_0^n$ there exists a finite 
constant $C_{\alpha\beta}>0$ with the property that
for every $(x,y)\in{\mathbb{R}}^n_{+}\times{\mathbb{R}}^n_{+}$,
\begin{equation}\label{mainest}
\big|\big(\partial^\alpha_X\partial^\beta_Y R_L\big)(x,y)\big|\leq\left\{
\begin{array}{l}
C_{\alpha\beta}\,|x-\overline{y}|^{2-n-|\alpha|-|\beta|}\,\,
\text{ if $|\alpha|+|\beta|>0$, or $n\geq 3$},
\\[10pt]
C+C\big|\!\ln|x-\overline{y}|\big|
\,\,\text{ if $|\alpha|=|\beta|=0$ and $n=2$}, 
\end{array}
\right.
\end{equation}
where $C\in(0,\infty)$, and $\overline{y}:=(y',-y_n)$ if  
$y=(y',y_n)\in\mathbb{R}^n_+$.

\vskip 0.08in
\item[(7)] For any multi-indices $\alpha,\beta\in\mathbb{N}_0^n$ 
there exists a finite constant $C_{\alpha\beta}>0$ such that
\begin{equation}\label{mainest2G}
\begin{array}{c}
\big|\big(\partial^\alpha_X\partial^\beta_Y G^L\big)(x,y)\big|
\leq C_{\alpha\beta}|x-y|^{2-n-|\alpha|-|\beta|}
\\[10pt]
\text{$\forall\,(x,y)\in{\mathbb{R}}^n_{+}\times{\mathbb{R}}^n_{+}\setminus{\rm diag}$,
if either $n\geq 3$, or $|\alpha|+|\beta|>0$},
\end{array}
\end{equation}
and, corresponding to $|\alpha|=|\beta|=0$ and $n=2$, there exists
$C\in(0,\infty)$ such that 
\begin{equation}\label{maKnaTTGB}
\big|G^L(x,y)\big|\leq C\big(1+\big|\!\ln|x-\overline{y}|\big|\big),
\quad\forall\,(x,y)\in{\mathbb{R}}^2_{+}\times{\mathbb{R}}^2_{+}\setminus{\rm diag}.
\end{equation}
\item[(8)] For each $\alpha,\beta\in\mathbb{N}_0^n$ one has
\begin{equation}\label{mainest3G}
\begin{array}{c}
\sup\limits_{y\in{\mathbb{R}}^n_{+}}\big\|\big(\partial^\alpha_X
\partial^\beta_Y G^L\big)(\cdot,y)\big\|_{\big[L^{\frac{n}{n-2+|\alpha|+|\beta|},\,\infty}({\mathbb{R}}^n_{+})
\big]^{M\times M}}<+\infty,
\\[10pt]
\text{if either $n\geq 3$, or $|\alpha|+|\beta|>0$}.
\end{array}
\end{equation}
In particular, in a uniform fashion with respect to $y\in{\mathbb{R}}^n_{+}$, 
\begin{align}\label{mainest2G.1}
& \text{the entries of $G^L(\cdot,y)$ are in $L^{\frac{n}{n-2},\,\infty}({\mathbb{R}}^n_{+})$ if }\,\,n\geq 3,
\\[4pt]
& \text{the entries of $\nabla_X G^L(\cdot,y)$ and $\nabla_Y G^L(\cdot,y)$ are in }\,\,
L^{\frac{n}{n-1},\,\infty}({\mathbb{R}}^n_{+}),
\label{mainest2G.2}
\\[4pt]
& \text{the entries of $\nabla^2_X G^L(\cdot,y),\nabla_X\nabla_Y G^L(\cdot,y),\nabla^2_Y G^L(\cdot,y)$
are in }\,\,L^{1,\infty}({\mathbb{R}}^n_{+}).
\label{mainest2G.3}
\end{align}
\item[(9)] If $p\in\big[1,\frac{n}{n-1}\big)$, then for each scalar function
$\zeta\in{\mathscr{C}}^\infty_c({\mathbb{R}}^n)$ one has
\begin{equation}\label{mainest4G}
\begin{array}{c}
\zeta G^L(\cdot,y)\in\big[\mathring{W}^{1,p}({\mathbb{R}}^n_{+})\big]^{M\times M}
\,\,\text{ for each }\,\,y\in{\mathbb{R}}^n_{+}
\\[10pt]
\text{and }\,\,\sup_{y\in{\mathbb{R}}^n_{+}}
\big\|\zeta G^L(\cdot,y)\big\|_{[W^{1,p}({\mathbb{R}}^n_{+})]^{M\times M}}<+\infty,
\end{array}
\end{equation}
where $\mathring{W}^{1,p}({\mathbb{R}}^n_{+})$ stands for the closure of ${\mathscr{C}}^\infty_c({\mathbb{R}}^n)$ in 
$W^{1,p}({\mathbb{R}}^n_{+})$.

\vskip 0.08in
\item[(10)] For every $x=(x',x_n)\in{\mathbb{R}}^n_{+}$ and every $y\in{\mathbb{R}}^n_{+}\setminus\{x\}$ 
one has {\rm (}with $P^L$ denoting the Poisson kernel for $L$ in ${\mathbb{R}}^n_{+}$ from Theorem~\ref{ya-T4-fav}{\rm )}
\begin{equation}\label{GHCewd-2PiK}
G^L(x,y)=E^L(x-y)-P^L_t\ast\Big(\big[E^L(\cdot-y)\big]\big|_{\partial{\mathbb{R}}^n_{+}}\Big)(x'),
\end{equation}
with the convolution applied to each column of the matrix inside the round parentheses.
\end{enumerate}
\end{theorem}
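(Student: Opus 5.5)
Existence comes first, and I will take formula \eqref{GHCewd-2PiK} as the definition: $G^L(x,y):=E^L(x-y)-R_L(x,y)$ with $R_L(x,y):=P^L_t\ast\big([E^L(\cdot-y)]|_{\partial\mathbb{R}^n_+}\big)(x')$, where $t=x_n$. Fix $y$. The boundary datum $f_y(z'):=E^L((z',0)-y)$ is smooth. For $n\ge3$ it is bounded, of size $O(|z'|^{2-n})$. For $n=2$ it grows logarithmically, and the Poisson kernel estimates $|P^L(z')|\le C(1+|z'|)^{-n}$ from Theorem~\ref{ya-T4-fav} still make the convolution converge. By Theorem~\ref{ya-T4-fav} the function $R_L(\cdot,y)$ is a null solution of $L$ whose nontangential boundary trace is $f_y$. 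Consequently $G^L(\cdot,y)$ satisfies \eqref{GHCewd-22.RRe}, \eqref{GHCewd-24.RRe} and \eqref{GHCewd-23.RRe}.

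The homogeneity of $P^L$ (namely $P^L_t(z')=t^{1-n}P^L(z'/t)$) and of $E^L$ shows that $R_L$ is tangentially translation invariant and positively homogeneous of degree $2-n$ (for $n=2$, only up to an additive constant, which cancels after any derivative). This gives (3) once $G^L$ is known. Differentiating under the integral, and using that $\partial_Y^\beta R_L$ is the Poisson extension of $\partial^\beta$ applied to the datum, gives smoothness of $R_L$ on $\mathbb{R}^n_+\times\mathbb{R}^n_+$. For the bounds \eqref{mainest}, homogeneity and translation invariance reduce matters to $y=e_n$, $|x-\overline{y}|\approx1+|x|$. I would then estimate the Poisson integral of $\partial^\beta f_y$, which decays like $|z'|^{2-n-|\beta|}$. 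This is a routine but careful splitting of the integral, with $x$-derivatives handled through the kernel scaling, or through the Agmon--Douglis--Nirenberg boundary estimates applied to the null solution in Whitney-type regions. Combining \eqref{mainest} with the bound $|\partial^\alpha E^L(z)|\le C|z|^{2-n-|\alpha|}$ from Theorem~\ref{FS-prop}, and using $|x-y|\le|x-\overline y|$, yields (7).

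Items (1), (2), (4), (8), (9) then follow from (6)--(7). The nontangential bounds \eqref{bound-NK-G}--\eqref{bouMNN} need an improvement over $|x-y|^{2-n}$ far away, because the leading terms cancel: $G^L(\cdot,y)$ vanishes on the boundary. My plan is to write $G^L(x,y)=-\int_0^{y_n}\partial_{Y_n}G^L(x,(y',s))\,ds$ using \eqref{ghagUGDS}, once symmetry is known. Alternatively, I would apply the boundary ADN estimate near $x'$ with zero Dirichlet data, which gains a factor $x_n/|x'|$. Either route gives the decay $|x'|^{1-n}$, and the $\log$ appears when $n=2$ or from integrating along cones. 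Boundary smoothness (2) follows from the ADN up-to-the-boundary regularity, since the boundary data are zero. The Sobolev and weak-$L^p$ statements are integrations of these pointwise bounds. For (9) I approximate by cutoffs and use the uniform bounds.

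Uniqueness is the key step. Suppose $u=G_1(\cdot,y)-G_2(\cdot,y)$ is a difference of two Green functions. Then $Lu=0$ in $\mathbb{R}^n_+$, $u$ is smooth, its nontangential trace vanishes, and $\mathcal{N}_\kappa u\in L^1(\mathbb{R}^{n-1},\frac{dx'}{1+|x'|^{n-1}})$. Near $y$ there is no issue, because interior estimates control $\mathcal{N}$ there as well. I would then invoke the Fatou-type uniqueness result for the Dirichlet problem in this weighted $L^1$ class established in the companion work \cite{GHA.I}/\cite{GHA.III} via the Poisson kernel: such $u$ must vanish. If this is not available as stated, I would prove it by duality. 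Pair $u$ against $G^{L^\top}(\cdot,z)$ on truncated domains, using the nontangential-trace Divergence Theorem from \cite{GHA.I}, and let the truncation tend to infinity with the decay \eqref{bouMNN} of the constructed $G^{L^\top}$; the weight $1/(1+|x'|^{n-1})$ exactly matches the decay $|x'|^{1-n}$ of the conormal derivative of $G^{L^\top}$. The same Green-formula pairing of $G^L(\cdot,y)$ with $G^{L^\top}(\cdot,x)$ on $\mathbb{R}^n_+$ minus small balls around $x$ and $y$ yields the symmetry (5). I expect the main obstacle to be justifying this Green formula: controlling the boundary terms at infinity and on $\partial\mathbb{R}^n_+$ with only nontangential information. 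This is where the Divergence Theorem of \cite{GHA.I} together with the decay estimates from (1) must be used carefully, and for $n=2$ the logarithm forces a sharper cancellation argument.
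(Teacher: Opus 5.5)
Your route departs from the paper's at the first step: you take \eqref{GHCewd-2PiK} as the definition, without the reflected term. This creates two genuine gaps.

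\textbf{First gap: the decay \eqref{bound-NK-G}, and for $n=2$ even the Green-function property.} Your boundary datum $E^L(\cdot-y)|_{\partial\mathbb{R}^n_+}$ has size $|z'|^{2-n}$, and grows logarithmically if $n=2$. Proposition~\ref{thm:existence} then gives only $\mathcal{N}_\kappa G^L(\cdot,y)\lesssim(1+|x'|)^{2-n}$ for $n\geq3$, and no decay at all for $n=2$.

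Your local fallback does not help. The boundary ADN estimate with zero data gains a factor $x_n/|x'|$ only in the layer $x_n\ll|x'|$. But $\Gamma_\kappa(x')$ contains points with $x_n\approx|x'|$, where nothing is gained.

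Your other route is to prove (5) first, then integrate $\partial_{Y_n}G^L$ in the pole variable using \eqref{ghagUGDS}. This is workable for $n\geq3$. There the crude bounds already give $\mathcal{N}u\cdot\mathcal{N}(\nabla v)\lesssim(1+|x'|)^{3-2n}\in L^1(\mathbb{R}^{n-1})$, so Lemma~\ref{Lgav-TeD} yields symmetry without (1). For $n=2$ the route is circular. The crude bound on $\mathcal{N}u$ is at best $O(1)$, and $(1+|x'|)^{-1}\notin L^1(\mathbb{R})$. So neither the Green-formula pairing nor even \eqref{GHCewd-25.RRe} for your $G^L$ is available.

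Concretely, you are missing a proof that $D:=E^L(\cdot-\overline{y})-P^L_t\ast\big(E^L(\cdot-\overline{y})|_{\partial\mathbb{R}^n_+}\big)$ vanishes identically. $D$ is a bounded null solution with zero trace, and for $n=2$ bounded functions lie outside the weighted-$L^1$ uniqueness class of \eqref{GHUNNDs}.

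The paper avoids this by defining $G^L$ via \eqref{Ihab-Ygab673}, subtracting the reflected difference $E^L(\cdot-y)-E^L(\cdot-\overline{y})$. That datum is $O(|x'|^{1-n})$, and its logarithmic parts cancel exactly when $n=2$. So \eqref{bound-NK-G} follows at once from $\mathcal{N}(P^L_t\ast f)\le C\mathcal{M}f$ and Lemma~\ref{lemma:M-ball}. Formula \eqref{GHCewd-2PiK}, i.e.\ $D\equiv0$, is recovered only in Step~11: via \eqref{GHUNNDs} for $n\geq3$, and via the H\"older/BMO well-posedness \cite[Theorem~1.21]{MMMM16} for $n=2$.

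\textbf{Second gap: the $y$-derivative bounds.} The estimates in (6)--(8) and \eqref{bouMNN} with $|\beta|>0$ cannot be obtained as you propose. Splitting the Poisson integral of a datum of size $|z'|^{2-n-|\beta|}$ with absolute values, at $y=e_n$, $x=(0',\rho)$, gives only $\rho^{1-n}\log\rho$ for $|\beta|=1$ and $\rho^{1-n}$ for $|\beta|\geq2$, instead of $\rho^{2-n-|\beta|}$.

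For $|\beta|=1$ the loss is intrinsic to absolute-value estimates. When $L$ is not reflection invariant, the leading part $(\partial_nE^L)(z',0)$ of $(\partial_nE^L)((z',0)-e_n)$ is in general nonzero, of exact order $|z'|^{1-n}$. One would have to exploit cancellations such as oddness, or vanishing moments via the equation, and your proposal does not address this.

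The paper instead uses transposition. The identity $\partial^\beta_YR_L(x,y)=[(\partial^\beta_XR_{L^\top})(y,x)]^\top$, together with $|y-\overline{x}|=|x-\overline{y}|$, reduces the zeroth-order bound for $\partial^\beta_YR_L$ to the already proved $x$-derivative bound for $L^\top$. Proposition~\ref{c1.2}, applied to the null solution $\partial^\beta_Y$ of the Poisson part, then adds the $x$-derivatives. Since you prove (5) anyway, the fix is to reorder: $x$-bounds, then uniqueness, then symmetry, then $y$-bounds.

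\textbf{Smaller points.} Proposition~\ref{c1.2} and item (2) presuppose that the Poisson part lies in $W^{1,2}$ up to the boundary. Nontangential information does not give this; the paper proves it in Lemma~\ref{UYfa-uVdssk} using the ADN representation $K^L=\Delta_{x'}^{(n+q)/2}K_q$. Your uniqueness argument by duality against $G^{L^\top}$ is exactly the paper's Step~6, with an $\varepsilon e_n$-shift of $u$ inside Lemma~\ref{Lgav-TeD} to handle the lack of control on $\nabla u$.
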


Our strategy for proving Theorem~\ref{ta.av-GGG.2A} involves the following steps.

In Step~1 we use the fundamental solution $E^L$ for $L$ in ${\mathbb{R}}^n$ from Theorem~\ref{FS-prop} and 
the Agmon-Douglis-Nirenberg Poisson kernel $P^L$ for the system $L$ in ${\mathbb{R}}^n_{+}$ from Theorem~\ref{ya-T4-fav}
in order to define $G^L(\cdot,\cdot)$ in the spirit of \eqref{GHCewd-2PiK}, with a caveat. Specifically, at this early stage in the proof 
we find it convenient to enhance the decay of the fundamental solution at infinity by working with $E^L(\cdot-y)-E^L(\cdot-\overline{y}\,)$ 
in place of $E^L(\cdot-y)$ (see \eqref{Ihab-Ygab673}). Based on the properties of the fundamental solution and the Poisson kernel we then 
see that this function satisfies \eqref{bound-NK-G}. Using elliptic regularity, in Step~2 we then establish \eqref{Fvabbb-7tF}.

In Step~3 we obtain a pointwise estimate for the error $R^L(\cdot,\cdot)$ defined as in \eqref{defRRR}, which amounts to the case 
$|\alpha|=|\beta|=0$ in \eqref{mainest}. This is already delicate as it makes use of more specialized properties of the fundamental solution 
$E^L$ and the Poisson kernel $P^L$, as well as the uniqueness for the $L^\infty$-Dirichlet boundary value problem for the Laplacian in 
${\mathbb{R}}^n_{+}$. Estimating higher order derivatives of $R^L(\cdot,\cdot)$ is significantly more challenging, 
and requires a circuitous approach. First, in Step~4, we make use of the full force of the Agmon-Douglis-Nirenberg estimates near the boundary 
(cf. Proposition~\ref{c1.2}) to deal with $\big(\partial^\alpha_X R_L\big)(\cdot,\cdot)$. In turn, this allows us to estimate in Step~5 the 
nontangential maximal function $\mathcal{N}^{\,{\mathbb{R}}^n_{+}\setminus K}_\kappa\big((\partial^\alpha_X G^L)(\cdot,y)\big)$ as in \eqref{bouMNN} 
with $|\beta|=0$. With this in hand, in Step~6 we are able to invoke the brand of Divergence Theorem developed in \cite{GHA.I} (presently 
recalled in Theorem~\ref{theor:div-thm} and subsequently used to establish Lemma~\ref{Lgav-TeD}) to conclude that $G^L(\cdot,\cdot)$ is the 
unique Green function for $L$ in the sense of Definition~\ref{ta.av-GGG}. As detailed in Step~7, Lemma~\ref{Lgav-TeD} and the quantitative aspects 
of $G^L(\cdot,\cdot)$ from Step~1 and Step~5 play a crucial role in justifying the transposition formula \eqref{GHCewd-22.RRe.5}. 
The idea now is to estimate $\big(\partial^\alpha_X\partial^\alpha_Y R_L\big)(\cdot,\cdot)$ based on what we have shown in 
Step~4 for $\big(\partial^\alpha_X R_{L^\top}\big)(\cdot,\cdot)$ and the transposition formula \eqref{GHCewd-22.RRe.5}.
In Step~8 we implement this strategy by once more relying heavily on the Agmon-Douglis-Nirenberg estimates near the boundary, in the format recalled 
in Proposition~\ref{c1.2}. At this stage, we have enough to justify the claims made in items {\it (1)}-{\it (9)} in the statement 
of Theorem~\ref{ta.av-GGG.2A}, and we do so in Steps~9-10. Finally, making use of the well-posedness result established from \cite[Theorem~1.21]{MMMM16},
in Step~11 we show that the initial formula for $G^L(\cdot,\cdot)$ in \eqref{Ihab-Ygab673} actually simplifies to \eqref{GHCewd-2PiK}. 

\medskip

Here is a very useful uniqueness result for the Dirichlet boundary value problem 
for Legendre-Hadamard elliptic systems in the upper-half space:

\begin{corollary}\label{thm:FP.CCC}
Let $L$ be an $M\times M$ system with constant complex coefficients as in \eqref{L-def}-\eqref{L-ell.X}, 
and fix some aperture parameter $\kappa>0$. Then 
\begin{equation}\label{jk-lm-jhR-LLL-HM-RN.w.}
\left.
\begin{array}{r}
u\in\big[{\mathscr{C}}^{\infty}({\mathbb{R}}^n_{+})\big]^M,\quad Lu=0\,\,\text{ in }\,\,{\mathbb{R}}^n_{+}
\\[8pt]
\displaystyle
\int_{\mathbb{R}^{n-1}}\big({\mathcal{N}}_{\kappa}u\big)(x')\,\frac{dx'}{1+|x'|^{n-1}}<\infty
\\[8pt]
u\big|^{{}^{\kappa-{\rm n.t.}}}_{\partial{\mathbb{R}}^n_{+}}=0\,\,\text{ at ${\mathscr{L}}^{n-1}$-a.e. point in }\,\,{\mathbb{R}}^{n-1}
\end{array}
\right\}
\Longrightarrow\,u\equiv 0\,\,\text{ in }\,\,{\mathbb{R}}^n_{+}.
\end{equation}
\end{corollary}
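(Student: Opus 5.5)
This is a corollary, so the plan is to derive it from the well-posedness/uniqueness theory already available, namely the Fatou-type and uniqueness results for Legendre-Hadamard systems in the upper half-space from \cite[Theorem~1.21]{MMMM16} (see also \cite{GHA.I}). The key input is the Poisson integral representation. Suppose $u\in[\mathscr{C}^\infty(\mathbb{R}^n_+)]^M$ satisfies $Lu=0$ and the weighted integrability $\int_{\mathbb{R}^{n-1}}\mathcal{N}_\kappa u\,\frac{dx'}{1+|x'|^{n-1}}<\infty$. Then the nontangential boundary trace $f:=u\big|^{\kappa-{\rm n.t.}}_{\partial\mathbb{R}^n_+}$ exists at $\mathscr{L}^{n-1}$-a.e. point. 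Moreover, $f\in[L^1(\mathbb{R}^{n-1},\frac{dx'}{1+|x'|^{n-1}})]^M$, and
\[
u(x',t)=(P^L_t\ast f)(x')\quad\text{for all }(x',t)\in\mathbb{R}^n_+ ,
\]
where $P^L$ is the Agmon-Douglis-Nirenberg Poisson kernel from Theorem~\ref{ya-T4-fav}. Once this is in hand, the corollary is immediate: the hypothesis gives $f=0$ a.e., so $u=P^L_t\ast 0\equiv 0$.

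The steps are as follows.

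\textbf{Step 1.} Note that $|f|\le\mathcal{N}_\kappa u$ a.e. Hence $f$ belongs to the weighted $L^1$ space. Since $|P^L(x')|\le C(1+|x'|)^{-n}$, the convolution $P^L_t\ast f$ is absolutely convergent.

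\textbf{Step 2.} Invoke the Fatou-type theorem (the well-posedness result \cite[Theorem~1.21]{MMMM16}), which identifies every null-solution with this integrability as the Poisson extension of its trace.

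If one prefers to argue directly rather than quote the result, I would do the following instead. Set $w:=u-P^L_t\ast f$ and use the Divergence Theorem of \cite{GHA.I} (Theorem~\ref{theor:div-thm}), pairing $w$ against a Green function for $L^\top$ (constructed from $E^{L^\top}$ and $P^{L^\top}$) with a cutoff, to show that $w(x)=0$. The decay estimates of the Poisson kernel and the weighted condition control the vector field required in that Divergence Theorem.

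The main obstacle is precisely the representation formula in Step 2. Its proof requires controlling $u$ at infinity purely through the weighted $L^1$ bound on $\mathcal{N}_\kappa u$, which is weaker than any $L^p$ hypothesis. This is handled by the cited Fatou-type theory, so in the present paper the corollary reduces to quoting it and observing that $f=0$.
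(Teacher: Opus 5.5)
Your final deduction is correct as far as it goes: if every such $u$ is known to equal $P^L_t\ast\big(u\big|^{{}^{\kappa-{\rm n.t.}}}_{\partial\mathbb{R}^n_{+}}\big)$, then a vanishing trace forces $u\equiv 0$. The problem is that all the content sits in your Step~2, and the source you quote does not say what you claim. In this paper \cite[Theorem~1.21]{MMMM16} is invoked, in Step~11, as a well-posedness result for boundary data in the homogeneous H\"older space $\dot{\mathscr{C}}^\eta$ under a Carleson-type condition on $|\nabla u|^2t\,dt\,dx'$. It is not a Fatou theorem for the class $\int_{\mathbb{R}^{n-1}}\mathcal{N}_\kappa u\,\frac{dx'}{1+|x'|^{n-1}}<\infty$.

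The representation you need is Theorem~\ref{thm:FP}. Citing it would be logically admissible, since its proof does not use the corollary. However, the paper proves it by the very mechanism that proves the corollary: it first applies \eqref{Ua-eDPa4.L} with $v:=G^{L^\top}_{\cdot\gamma}(\cdot,x)$, then identifies the boundary term with $P^L$ via \eqref{Ua-eD.LanTF}. So your main route swaps the corollary for a strictly stronger statement and gives no argument for it.

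The paper's proof is \eqref{GHUNNDs}. Fix $y\in\mathbb{R}^n_{+}$ and $\gamma$, and let $v$ be the $\gamma$-th column of the Green function for $L^\top$ built in Step~1. Then apply Lemma~\ref{Lgav-TeD}. Since $Lu=0$ and $L^\top v$ is $\delta_y$ times the $\gamma$-th unit vector, \eqref{Ua-eDPa4.L} expresses $u_\gamma(y)$ as a boundary integral against $u\big|^{{}^{\kappa-{\rm n.t.}}}_{\partial\mathbb{R}^n_{+}}=0$.

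Your fallback sketch points in this direction, but it leaves out what makes it work and credits the wrong estimate.
\begin{itemize}
\item With vanishing trace you have $w=u$, so the Poisson kernel plays no role at all.
\item What makes the Divergence Theorem applicable is the bound $\mathcal{N}^{\,\mathbb{R}^n_{+}\setminus K}_\kappa\big((\nabla_X G^{L^\top})(\cdot,y)\big)(x')\leq C(1+|x'|)^{1-n}$ from Step~5. This rests on the Agmon--Douglis--Nirenberg boundary estimates of Step~4. It is exactly dual to your weight, which gives $(\mathcal{N}_\kappa u)\cdot\mathcal{N}(\nabla v)\in L^1(\mathbb{R}^{n-1})$.
\item Both terms of the vector field must be routed through $\nabla v$. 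The nontangential maximal function of $G^{L^\top}(\cdot,y)$ itself carries a factor $1+\log_{+}|x'|$, so it would not pair with your hypothesis. Also, nothing is assumed about $\nabla u$ near the boundary.
\item Lemma~\ref{Lgav-TeD} handles the term $v\,\nabla u$ by passing to $u^\varepsilon:=u(\cdot+\varepsilon e_n)$. It combines $|v(z)|\leq Cz_n\,\mathcal{N}(\nabla v)(x')$, which follows from $v\big|^{{}^{\kappa-{\rm n.t.}}}_{\partial\mathbb{R}^n_{+}}=0$, with the interior estimate $|\nabla u^\varepsilon(z)|\leq Cz_n^{-1}\mathcal{N}u(x')$.
\item The lemma also needs $(\nabla v)\big|^{{}^{\kappa-{\rm n.t.}}}_{\partial\mathbb{R}^n_{+}}$ to exist, which comes from the boundary regularity in Step~2.
\end{itemize}

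As a route to the general representation, your $w:=u-P^L_t\ast f$ has a further problem. It would need $\mathcal{N}_\kappa(P^L_t\ast f)$ to lie in the weighted $L^1$ space, but you only know $\mathcal{N}_\kappa(P^L_t\ast f)\lesssim\mathcal{M}f$, and $\mathcal{M}$ does not preserve $L^1\big(\mathbb{R}^{n-1},\frac{dx'}{1+|x'|^{n-1}}\big)$. The paper avoids this by never forming $w$: it applies \eqref{Ua-eDPa4.L} to $u$ itself.
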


As noted in \eqref{GHUNNDs}, this is a consequence of the existence of a Green function as in Theorem~\ref{ta.av-GGG.2A}. Bearing this in mind, 
from Corollary~\ref{thm:FP.CCC} and \cite[(2.6.13), p.\,104]{GHA.V} we conclude that the weakly elliptic $n\times n$ system 
\begin{equation}\label{eq:LD}
L_D:=\Delta-2\nabla{\rm div}\,\,\text{ in }\,\,{\mathbb{R}}^n
\end{equation}
does not possess a Green function of the sort described in Theorem~\ref{ta.av-GGG.2A} (since there are actually infinitely many linearly 
independent null-solutions for the $L^p$-Dirichlet boundary value problem for $L_D$ in ${\mathbb{R}}^n_{+}$ whenever $1<p<\infty$). 
In particular, this shows that Legendre-Hadamard ellipticity (cf. \eqref{L-ell.X}) cannot be relaxed to mere weak ellipticity 
(cf. \eqref{Def-ELLa.INTR}) in the context of Theorem~\ref{ta.av-GGG.2A}.

There is a version of Theorem~\ref{ta.av-GGG.2A}, stated in Theorem~\ref{FS-prop.INTR} below, which assumes a different 
set of hypotheses for the system $L$, and in which the conclusions regarding the Green function $G^L$ are fine-tuned differently. 
Before presenting a formal statement, we need some definitions. 
Given a second-order, homogeneous, complex constant coefficient, $M\times M$ system $L$ in ${\mathbb{R}}^n$, written as in \eqref{L-def},
along with a matrix $W=(w_{jk})_{1\leq j,k\leq n}\in{\mathbb{C}}^{n\times n}$, define the second-order, homogeneous, complex constant coefficient, $M\times M$ system in ${\mathbb{R}}^n$, formally written as 
\begin{equation}\label{eq:kfgdc.WACO.D+M.new}
L\circ W:=\Big(a_{jk}^{\alpha\beta}(W\nabla)_j(W\nabla)_k\Big)_{1\leq\alpha,\beta\leq M}.
\end{equation}
We shall say that $L$ is invariant under $W$ if $L\circ W=L$. A direct computation reveals 
that if $L$ is written as in \eqref{L-def} then $L$ is invariant under $W$ if and only if 
\begin{equation}\label{JKBvc-ut4.YFav.a.222.D+M}
\begin{array}{c}
\text{for each $\alpha,\beta\in\{1,\dots,M\}$ and $j,k\in\{1,\dots,n\}$} 
\\[0pt]
\text{one has }\,\,\big(a_{rs}^{\alpha\beta}+a_{sr}^{\alpha\beta}\big)w_{rj}w_{sk}=a_{jk}^{\alpha\beta}+a_{kj}^{\alpha\beta}. 
\end{array}
\end{equation}
Of a particular importance to us is the case when $W$ is the reflection in ${\mathbb{R}}^n$ across the horizontal plane $x_n=0$, 
i.e., when $Wx=\overline{x}$ for each $x\in{\mathbb{R}}^n$. In such a scenario, in lieu of saying that $L$ is invariant under $W$ 
we shall simply call $L$ {\tt reflection} {\tt invariant}. As seen from \eqref{JKBvc-ut4.YFav.a.222.D+M}, 
if $L$ is written as in \eqref{L-def} then $L$ is reflection invariant if and only if 
\begin{equation}\label{JKBvc-ut4.YFav.a.222}
\begin{array}{c}
\text{for each $\alpha,\beta\in\{1,\dots,M\}$ one has} 
\\[0pt]
a_{jn}^{\alpha\beta}+a_{nj}^{\alpha\beta}=0\,\,\text{ whenever }\,\,1\leq j<n. 
\end{array}
\end{equation}
In particular,
\begin{equation}\label{JKBvc-ut4.YFav.a.333}
\parbox{11.10cm}{the system $L$ is reflection invariant if it may be written as in \eqref{L-def} using a coefficient tensor 
$A=\big(a_{jk}^{\alpha\beta}\big)_{\substack{1\leq j,k\leq n\\ 1\leq\alpha,\beta\leq M}}$
which has the following block-structure: $a_{jn}^{\alpha\beta}=a_{nj}^{\alpha\beta}=0$ for each $j\in\{1,\dots,n-1\}$.}
\end{equation}
We wish to remark that if the system $L$ is invariant under rotations, then $L$ is reflection invariant.
Finally, as noted in \eqref{eq:edxuye.FDa.4}, a weakly elliptic system $L$ is rotation invariant if and only if 
the fundamental solution $E^L$ canonically associated with $L$ as in Theorem~\ref{FS-prop} 
is rotation invariant, hence a radial function in ${\mathbb{R}}^n\setminus\{0\}$.

The point of the theorem below is that in the aforementioned scenario 
the logarithm in the right side of \eqref{bound-NK-G} may actually be omitted, and one has better decay in \eqref{bouMNN}
(at least when $\beta=0$). Another notable feature is the fact that, under the assumption that $L$ is reflection invariant
we may relax the strong ellipticity assumption from Theorem~\ref{ta.av-GGG.2A} and only demand that $L$ is weakly elliptic.
These assumptions no longer guarantee the existence of a Poisson kernel for $L$, so a new formula for the Green function 
is required (compare \eqref{GHCewd-2PiK} with \eqref{JKBvc-ut4.YFav}). 

\begin{theorem}\label{FS-prop.INTR}
Fix $n,M\in{\mathbb{N}}$, with $n\geq 2$, and consider a second-order, homogeneous, complex constant coefficient, 
$M\times M$ system $L$ in ${\mathbb{R}}^n$. If $L$ is weakly elliptic and reflection invariant, then there exists a unique 
Green function $G^L(\cdot,\cdot)$ for $L$ in $\mathbb{R}^n_{+}$, in the sense of Definition~\ref{ta.av-GGG}.
Specifically, with $E^L$ denoting the fundamental solution canonically associated with $L$ as in Theorem~\ref{FS-prop}, 
the aforementioned unique Green function is given by 
\begin{equation}\label{JKBvc-ut4.YFav}
G^L(x,y):=E^L(x-y)-E^L(x-\overline{y})\,\,\text{ for all }\,\,(x,y)\in{\mathbb{R}}^n_{+}\times{\mathbb{R}}^n_{+}\setminus{\rm diag}.
\end{equation}
Moreover, one has the following partial improvement of \eqref{bound-NK-G}-\eqref{bouMNN}: given any aperture 
parameter $\kappa>0$, any pole $y\in{\mathbb{R}}^n_{+}$, any compact subset $K$ of ${\mathbb{R}}^n_{+}$ with $y\in\mathring{K}$, 
and any multi-index $\alpha\in\mathbb{N}_0^n$, there exists a finite constant $C^{\alpha}_{y,K,\kappa}>0$ 
such that
\begin{equation}\label{bouMNN.XXX}
\Big(\mathcal{N}^{\,{\mathbb{R}}^n_{+}\setminus K}_\kappa
(\partial^\alpha_X G^L)(\cdot,y)\Big)(x')\leq\frac{C^{\alpha}_{y,K,\kappa}}{1+|x'|^{n-1+|\alpha|}}
\end{equation}
for every $x'\in\mathbb{R}^{n-1}$. In addition, one has the following refinement of \eqref{GHCewd-22.RRe.5}:
if $G^{L^\top}\!\!(\cdot,\cdot)$ denotes the {\rm (}unique, by the first claim in the present statement{\rm )} 
Green function for $L^\top$ in ${\mathbb{R}}^n_{+}$ then 
\begin{equation}\label{GHCewd-22.RRe.5.D+M}
\begin{array}{c}
G^L(x,y)=G^L(y,x)\,\,\text{ and }\,\,G^L(x,y)=\Big[G^{L^\top}\!\!(x,y)\Big]^\top
\\[4pt]
\text{for all pairs }\,\,(x,y)\in{\mathbb{R}}^n_{+}\times{\mathbb{R}}^n_{+}\setminus{\rm diag}.
\end{array}
\end{equation}
Finally, if $L$ is as in \eqref{L-def}-\eqref{L-ell.X} and also reflection invariant, then the Green function 
constructed in \eqref{JKBvc-ut4.YFav} coincides with the Green function from Theorem~\ref{ta.av-GGG.2A}.
\end{theorem}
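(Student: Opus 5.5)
Define $G^L$ by \eqref{JKBvc-ut4.YFav} and verify the four conditions of Definition~\ref{ta.av-GGG} directly. Then obtain uniqueness from a Liouville-type argument that does not use a Poisson kernel, since under mere weak ellipticity none is available.

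\textbf{Step 1: properties of $E^L$.} From Theorem~\ref{FS-prop} I use that $E^L\in{\mathscr{C}}^\infty(\mathbb{R}^n\setminus\{0\})$ is even and solves $LE^L=\delta_0 I_{M\times M}$. For $n\geq 3$ it is positively homogeneous of degree $2-n$. For $n=2$ it is a homogeneous function of degree $0$ plus a constant matrix times $\ln|x|$, and all derivatives of positive order are homogeneous.

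\textbf{Step 2: reflection invariance.} I claim $E^L(\overline{x})=E^L(x)$. By \eqref{JKBvc-ut4.YFav.a.222}, the symbol $L(\xi)$ satisfies $L(\overline{\xi})=L(\xi)$. Since $E^L$ is obtained from $L(\xi)^{-1}$ by the inverse Fourier transform, with the standard normalization that is rotation/reflection covariant (and for $n=2$ the log correction has the same symmetry), we get $E^L\circ W=E^L$ for $W$ the reflection.

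\textbf{Step 3: verification of the definition.}
\begin{itemize}
\item Since $\overline{y}\notin\mathbb{R}^n_{+}$, we have $L[E^L(\cdot-\overline{y})]=0$ in $\mathbb{R}^n_{+}$, which gives \eqref{GHCewd-23.RRe}. Local integrability follows from Step 1.
\item For $x=(x',0)$ we have $|x-y|=|x-\overline{y}|$. More precisely, $x-\overline{y}=\overline{x-y}$ when $x_n=0$, so Step 2 gives $G^L(x,y)=0$ on the boundary. Continuity of $G^L(\cdot,y)$ up to $\partial\mathbb{R}^n_{+}$ away from $y$ then yields the nontangential trace \eqref{GHCewd-24.RRe}.
\item For the decay \eqref{bouMNN.XXX}, take $|x|\gg 1$ and apply the mean value theorem along the segment from $x-y$ to $x-\overline{y}$, whose length is $2y_n$. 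This gives
$$|\partial^\alpha_X G^L(x,y)|\leq 2y_n\sup_{z\in[x-y,\,x-\overline{y}]}|\nabla\partial^\alpha E^L(z)|\leq C|x|^{1-n-|\alpha|}.$$
This holds for $n=2$ as well, since $\nabla E^L$ is homogeneous of degree $-1$. Near $K$ the function is bounded.
\item Points in a $\kappa$-cone at $x'$ have $|x|\approx|x'|$ for large $|x'|$, which yields \eqref{bouMNN.XXX}. This in turn implies \eqref{GHCewd-25.RRe}, because $(1+|x'|)^{1-n}(1+|x'|^{n-1})^{-1}$ is integrable on $\mathbb{R}^{n-1}$.
\end{itemize}

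\textbf{Step 4: uniqueness (main obstacle).} Let $u$ be the difference of two Green functions with pole $y$. Then $u$ is $L$-null in $\mathbb{R}^n_{+}$ by hypoellipticity and has zero nontangential trace. Its nontangential maximal function over $\mathbb{R}^n_{+}\setminus K$ is integrable against $(1+|x'|^{n-1})^{-1}dx'$, and $u$ is bounded on $K$. So I must show such a $u$ vanishes, using only weak ellipticity together with reflection invariance.

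The plan is to replace the Poisson-kernel argument by a Green representation with the explicit $G^{L^\top}$. Note that $L^\top$ is also weakly elliptic and reflection invariant, so by Steps 1--3 the function $G^{L^\top}(\cdot,x)$ has the decay \eqref{bouMNN.XXX}. I then apply the divergence theorem with nontangential traces (Theorem~\ref{theor:div-thm}, via Lemma~\ref{Lgav-TeD}) to the vector field built from $u$ and $G^{L^\top}(\cdot,x)$ on $\mathbb{R}^n_{+}\setminus B(x,r)$. The boundary terms on $\partial\mathbb{R}^n_{+}$ vanish: $u$ has zero trace, and $G^{L^\top}$ vanishes there with its conormal derivative controlled.

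The delicate point is that the field involves $\nabla u$, whose nontangential control is not a hypothesis. I would first try to avoid it by mollifying $u$ tangentially, or by using interior estimates on Whitney cubes to bound $\mathcal{N}(\nabla u)$ by $\mathcal{N}u$ on a smaller-aperture cone. If that fails, I would try to reduce to $x_n$-smoothed versions $u(\cdot+\varepsilon e_n)$. In either case the decay $|x|^{1-n}$ and $|x|^{-n}$ from \eqref{bouMNN.XXX} should make the integrals at infinity vanish against the weighted $L^1$ condition. Letting $r\to0$ then gives $u(x)=0$.

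\textbf{Step 5: remaining claims.}
\begin{itemize}
\item Symmetry: $G^L(y,x)=E^L(y-x)-E^L(y-\overline{x})$. Evenness of $E^L$ gives $E^L(y-x)=E^L(x-y)$. Combining evenness with Step 2 gives $E^L(y-\overline{x})=E^L(\overline{\overline{y}-x})=E^L(x-\overline{y})$.
\item Transposition: $E^{L^\top}=(E^L)^\top$, by taking transposes in the Fourier inversion, and substituting into \eqref{JKBvc-ut4.YFav} gives the second identity in \eqref{GHCewd-22.RRe.5.D+M}.
\item The final claim follows from the uniqueness part of Theorem~\ref{ta.av-GGG.2A}.
\end{itemize}
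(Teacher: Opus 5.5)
Your proposal is correct and follows essentially the paper's route:
\begin{itemize}
\item reflection invariance of $E^L$ makes $G^L(\cdot,y)$ vanish on $\partial\mathbb{R}^n_{+}$;
\item a mean-value estimate combined with the cone geometry of \eqref{UUNNpkjgr} gives \eqref{bouMNN.XXX};
\item uniqueness comes from Lemma~\ref{Lgav-TeD} with $v=G^{L^\top}_{\cdot\gamma}(\cdot,y)$;
\item the symmetry and transposition identities follow from evenness, reflection invariance and $(E^L)^\top=E^{L^\top}$.
\end{itemize}
The ``delicate point'' you raise in Step~4 is already handled inside Lemma~\ref{Lgav-TeD}. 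That lemma uses the shift $u(\cdot+\varepsilon e_n)$, the interior bound $|\nabla u^\varepsilon(z)|\lesssim z_n^{-1}\mathcal{N}u$, and $|v(z)|\lesssim z_n\,\mathcal{N}(\nabla v)$. So you only need to check its product hypothesis \eqref{DiBBa.LLap.BBBB}, which holds because $\mathcal{N}(\nabla_X G^{L^\top})(x')\lesssim(1+|x'|)^{-n}$. Your first suggestion, bounding $\mathcal{N}(\nabla u)$ by $\mathcal{N}u$ directly, would not work.
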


For example, the operator $L_\lambda:=\partial_1^2+\cdots+\partial_{n-1}^2+\lambda\partial_n^2$ from \eqref{eq:LLLambda} 
has a Green function as in Theorem~\ref{FS-prop.INTR} if $\lambda\in{\mathbb{C}}\setminus(-\infty,0]$, and this Green function also 
satisfies the conclusions in Theorem~\ref{ta.av-GGG.2A} whenever ${\rm Re}\,\lambda>0$.

Our next result in this section describes the rather precise relationship between the 
Agmon-Douglis-Nirenberg Poisson kernel and the Green function considered in Theorem~\ref{ta.av-GGG.2A}. 

\begin{corollary}\label{y6tFFF.cc}
Assume that $L$ is an $M\times M$ system with constant complex coefficients as in 
\eqref{L-def}-\eqref{L-ell.X}. Then there exists a unique Poisson kernel $P^L$ for 
$L$ in ${\mathbb{R}}^n_{+}$ in the sense of Definition~\ref{defi:Poisson}. Moreover, 
one may recover $P^L=\big(P^L_{\gamma\alpha}\big)_{1\leq\gamma,\alpha\leq M}$ from 
the Green function $G^L$ according to the formula 
\begin{equation}\label{Ua-eD.LBBw.3B}
\begin{array}{c}
P^L_{\gamma\alpha}(x')=-a^{\beta\alpha}_{nn}
\big(\partial_{Y_n} G^{L}_{\gamma\beta}\big)\big((x',1),0\big),
\quad\forall\,x'\in{\mathbb{R}}^{n-1},
\\[6pt]
\text{for each pair of indexes }\,\,\alpha,\gamma\in\{1,\dots,M\}.
\end{array}
\end{equation}
In addition, if the fundamental solution $E^L=\big(E^L_{\gamma\beta}\big)_{1\leq\gamma,\beta\leq M}$ 
of $L$ from Theorem~\ref{FS-prop} is a radial function, then for 
each $\alpha,\gamma\in\{1,\dots,M\}$ one has
\begin{equation}\label{Ua-eD.kab}
P^L_{\gamma\alpha}(x')=2a^{\beta\alpha}_{nn}(\partial_n E^L_{\gamma\beta})(x',1),
\qquad\forall\,x'\in{\mathbb{R}}^{n-1}.
\end{equation}
\end{corollary}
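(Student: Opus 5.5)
The plan is to derive the formula \eqref{Ua-eD.LBBw.3B} by applying Green's representation to a null-solution given by the Poisson integral, and to read off uniqueness from the resulting identity. Existence of a Poisson kernel comes from Theorem~\ref{ya-T4-fav} (Agmon-Douglis-Nirenberg). For uniqueness I will prove that \emph{any} Poisson kernel $P$ for $L$, in the sense of Definition~\ref{defi:Poisson}, must coincide with the right-hand side of \eqref{Ua-eD.LBBw.3B}. This right-hand side is expressed solely in terms of the Green function, which is unique by Theorem~\ref{ta.av-GGG.2A}, so uniqueness and the formula are obtained together.

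The steps, in order, are as follows.

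First, fix $f\in[{\mathscr{C}}^\infty_c({\mathbb{R}}^{n-1})]^M$ and set $u(x',t):=(P_t\ast f)(x')$. From the defining properties of a Poisson kernel, $u$ is a null-solution of $L$ with ${\mathcal{N}}_\kappa u\in L^p$ and $u|^{{}^{\kappa-{\rm n.t.}}}_{\partial{\mathbb{R}}^n_{+}}=f$.

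Second, fix $x\in{\mathbb{R}}^n_{+}$ and apply the Divergence Theorem with nontangential traces (Theorem~\ref{theor:div-thm}, in the form packaged in Lemma~\ref{Lgav-TeD}) in ${\mathbb{R}}^n_{+}$. The vector field is the Green-type pairing of $u$ with the columns of $G^{L^\top}(\cdot,x)$, or equivalently of $G^L(x,\cdot)$ via the transposition formula \eqref{GHCewd-22.RRe.5}, with a small ball around $x$ excised. The estimates \eqref{bound-NK-G} and \eqref{bouMNN} supply the integrability needed for the nontangential maximal functions of the vector field and of its divergence. Because $G$ vanishes on the boundary and $Lu=0$, only two contributions survive: the pole, which yields $u(x)$, and the conormal derivative of $G$ paired with $f$. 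This gives
\[
u_\gamma(x)=-\int_{{\mathbb{R}}^{n-1}}a^{\beta\alpha}_{nn}\big(\partial_{Y_n}G^L_{\gamma\beta}\big)\big(x,(y',0)\big)f_\alpha(y')\,dy'.
\]
Here only the $a_{nn}$ coefficients appear, since the tangential derivatives of $G^L(x,\cdot)$ vanish on the boundary by \eqref{ghagUGDS}.

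Third, since $f$ is arbitrary, I will compare the resulting kernel with $P_t(x'-y')$. Using the tangential translation invariance \eqref{JKBvc-ut4} together with the homogeneity \eqref{mainest3G.LLL} (degree $1-n$ for one $Y$-derivative), and putting $y'=0$ and $t=1$, yields $P(x')=-a^{\beta\alpha}_{nn}(\partial_{Y_n}G^L_{\gamma\beta})((x',1),0)$. The evaluation at the boundary point $y=0$ is legitimate by the smoothness up to the boundary in \eqref{ghagUGDS}. This identifies $P$, giving both uniqueness and \eqref{Ua-eD.LBBw.3B}.

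For \eqref{Ua-eD.kab}, assume $E^L$ is radial. By \eqref{eq:edxuye.FDa.4} this happens precisely when $L$ is rotation invariant, hence reflection invariant. Theorem~\ref{FS-prop.INTR} then gives $G^L(x,y)=E^L(x-y)-E^L(x-\overline{y})$. Differentiating in $y_n$ and evaluating at $y=0$ gives $-\partial_nE^L(x)-\partial_nE^L(x)=-2\partial_nE^L(x)$. Substituting this into \eqref{Ua-eD.LBBw.3B} gives \eqref{Ua-eD.kab}.

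I expect the main obstacle to be the justification of the divergence theorem step near the boundary. One must check that the vector field built from $u$ and $G^L(x,\cdot)$ has nontangential maximal function in $L^1({\mathbb{R}}^{n-1})$ away from the pole, and that its pointwise nontangential trace is the one claimed. For $u$ this uses ${\mathcal{N}}_\kappa u,{\mathcal{N}}_\kappa(\nabla u)$ only locally, so I would first try replacing $f$ by boundary data for which $\nabla u$ has a controlled nontangential maximal function, or else moving all derivatives onto $G$. For $G$ it uses the decay \eqref{bouMNN}, $1/(1+|x'|^{n-1})$, which combines with the compact support of $f$ and the $L^p$ bounds on ${\mathcal{N}}_\kappa u$ via Hölder's inequality. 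A secondary technical point is the index and transpose bookkeeping when $G^{L^\top}$ is converted to $G^L$ through \eqref{GHCewd-22.RRe.5}.
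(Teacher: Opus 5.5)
Your proposal is correct and follows essentially the same route as the paper. Like the paper, you apply Lemma~\ref{Lgav-TeD} to $u=P_t\ast f$ with $f\in[{\mathscr{C}}^\infty_c({\mathbb{R}}^{n-1})]^M$ and $v=G^{L^\top}_{\cdot\gamma}(\cdot,x)$, pass to $G^L$ via \eqref{GHCewd-22.RRe.5} and \eqref{JKBvc-ut4}, read off the kernel from the arbitrariness of $f$ (so uniqueness follows from that of the Green function), and get \eqref{Ua-eD.kab} from \eqref{JKBvc-ut4.YFav}. Two small points: the obstacle you anticipate is already absorbed by Lemma~\ref{Lgav-TeD}, whose $\varepsilon$-shift argument requires only that the product of the nontangential maximal functions of $u$ and $\nabla v$ be integrable, with no control of $\nabla u$; and the homogeneity \eqref{mainest3G.LLL} is unnecessary, since the identity holds for every $t>0$ and one simply sets $z'=0'$, $t=1$.
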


Historically, Green function estimates and related regularity results have played a fundamental 
role in partial differential equations. Theorem~\ref{ta.av-GGG.2A} is particularly useful in the 
treatment of those boundary value problems for systems in which the size of the solution is measured 
by means of the nontangential maximal operator. For a multitude of examples of Dirichlet boundary 
problems for elliptic systems in the upper-half space formulated in this spirit the reader is 
referred to \cite{MMMM16}, \cite{GHA.V}, \cite{MMMMM}, \cite{LaMi24}, \cite{MiTak24}.

To illustrate this link, here we present a Fatou-type theorem and a Poisson's integral formula for elliptic systems. 

\begin{theorem}\label{thm:FP}
Let $L$ be an $M\times M$ system with constant complex coefficients as in \eqref{L-def}-\eqref{L-ell.X}, 
and fix some aperture parameter $\kappa>0$. Then 
\begin{equation}\label{jk-lm-jhR-LLL-HM-RN.w}
\left\{
\begin{array}{l}
u\in\big[{\mathscr{C}}^{\infty}({\mathbb{R}}^n_{+})\big]^M,\quad Lu=0\,\,\text{ in }\,\,{\mathbb{R}}^n_{+},
\\[8pt]
\displaystyle
\int_{\mathbb{R}^{n-1}}\big({\mathcal{N}}_{\kappa}u\big)(x')\,\frac{dx'}{1+|x'|^{n-1}}<\infty,
\end{array}
\right.
\end{equation}
implies that 
\begin{eqnarray}\label{Tafva.2222}
\left\{
\begin{array}{l}
u\big|^{{}^{\kappa-{\rm n.t.}}}_{\partial{\mathbb{R}}^n_{+}}
\,\,\text{ exists at ${\mathscr{L}}^{n-1}$-a.e. point in }\,\,{\mathbb{R}}^{n-1},
\\[10pt]
\displaystyle
u\big|^{{}^{\kappa-{\rm n.t.}}}_{\partial{\mathbb{R}}^n_{+}}\,\,\text{ belongs to }\,\,
\Big[L^1\Big({\mathbb{R}}^{n-1}\,,\,\frac{dx'}{1+|x'|^{n-1}}\Big)\Big]^M,
\\[12pt]
u(x',t)=\Big(P^L_t\ast\big(u\big|^{{}^{\kappa-{\rm n.t.}}}_{\partial{\mathbb{R}}^n_{+}}\big)\Big)(x')
\,\,\text{ for each }\,\,(x',t)\in{\mathbb{R}}^n_{+},
\end{array}
\right.
\end{eqnarray}
where $P^L=\big(P^L_{\beta\alpha}\big)_{1\leq\beta,\alpha\leq M}$
is the Agmon-Douglis-Nirenberg Poisson kernel for the system $L$ in ${\mathbb{R}}^n_{+}$ 
and $P^L_t(x'):=t^{1-n}P^L(x'/t)$ for each $x'\in{\mathbb{R}}^{n-1}$ and $t>0$.

Moreover, under the additional assumption that $L$ is also reflection invariant, whenever  
\begin{equation}\label{jk-lm-jhR-LLL-HM-RN.w.BIS}
\left\{
\begin{array}{l}
u\in\big[{\mathscr{C}}^{\infty}({\mathbb{R}}^n_{+})\big]^M,\quad Lu=0\,\,\text{ in }\,\,{\mathbb{R}}^n_{+},
\\[8pt]
\displaystyle
\int_{\mathbb{R}^{n-1}}\big({\mathcal{N}}_{\kappa}u\big)(x')\,\frac{dx'}{1+|x'|^{n}}<\infty,
\end{array}
\right.
\end{equation}
it follows that 
\begin{eqnarray}\label{Tafva.2222.BIS}
\left\{
\begin{array}{l}
u\big|^{{}^{\kappa-{\rm n.t.}}}_{\partial{\mathbb{R}}^n_{+}}
\,\,\text{ exists at ${\mathscr{L}}^{n-1}$-a.e. point in }\,\,{\mathbb{R}}^{n-1},
\\[10pt]
\displaystyle
u\big|^{{}^{\kappa-{\rm n.t.}}}_{\partial{\mathbb{R}}^n_{+}}\,\,\text{ belongs to }\,\,
\Big[L^1\Big({\mathbb{R}}^{n-1}\,,\,\frac{dx'}{1+|x'|^{n}}\Big)\Big]^M,
\\[12pt]
u(x',t)=\Big(P^L_t\ast\big(u\big|^{{}^{\kappa-{\rm n.t.}}}_{\partial{\mathbb{R}}^n_{+}}\big)\Big)(x')
\,\,\text{ for each }\,\,(x',t)\in{\mathbb{R}}^n_{+}.
\end{array}
\right.
\end{eqnarray}
\end{theorem}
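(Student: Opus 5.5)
The plan is to prove the Fatou-type statement and the Poisson integral formula in Theorem~\ref{thm:FP} through a Green-type integral representation of $u$, using the Green function from Theorem~\ref{ta.av-GGG.2A} together with the Divergence Theorem from \cite{GHA.I} (Theorem~\ref{theor:div-thm}).

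\textbf{Step 1: existence and integrability of the boundary trace.} Fix $x\in{\mathbb{R}}^n_{+}$ and $\varepsilon>0$. I would first show that $u(\cdot+\varepsilon e_n)$, which is smooth up to the boundary, is given by the Poisson integral of its restriction $f_\varepsilon:=u(\cdot,\varepsilon)$. Indeed, $w_\varepsilon:=u(\cdot+\varepsilon e_n)-P^L_t\ast f_\varepsilon$ is a null-solution with zero boundary trace, and $\mathcal{N}_\kappa w_\varepsilon$ is integrable against $(1+|x'|^{n-1})^{-1}dx'$. Here I use that $|f_\varepsilon|\le \mathcal N_\kappa u$ and that $P^L$ decays like $|x'|^{1-n}$ (Theorem~\ref{ya-T4-fav}), so the Poisson integral of a function in $L^1(dx'/(1+|x'|^{n-1}))$ has nontangential maximal function in the same weighted class. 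I would check that last claim by splitting the datum into local and far parts. Corollary~\ref{thm:FP.CCC} then gives $w_\varepsilon\equiv0$.

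\textbf{Step 2: the limit $\varepsilon\to0^+$.} Since $\sup_\varepsilon|f_\varepsilon|\le\mathcal N_\kappa u\in L^1(dx'/(1+|x'|^{n-1}))$, a subsequence $f_\varepsilon\,dx'/(1+|x'|^{n-1})$ converges weak-$*$ as measures; weak compactness actually holds in $L^1$ thanks to the domination. The limit $f$ therefore lies in $L^1(dx'/(1+|x'|^{n-1}))$. Because $P^L_t(x'-\cdot)(1+|\cdot|^{n-1})$ is bounded and continuous for fixed $(x',t)$, with equicontinuity in $\varepsilon$, I can pass to the limit to get $u(x',t)=(P^L_t\ast f)(x')$. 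The standard Fatou theorem for Poisson integrals with an approximate-identity kernel of this decay then gives that the $\kappa$-nontangential limit of $u$ exists a.e. and equals $f$. This proves \eqref{Tafva.2222}. Alternatively, I could derive the formula directly by applying Theorem~\ref{theor:div-thm} to the pairing of $u$ with $G^{L^\top}(\cdot,x)$ and invoking \eqref{Ua-eD.LBBw.3B}. The decay \eqref{bouMNN} with $|\beta|=1$ (order $|x'|^{1-n}$) matches the weight exactly, which makes this route viable too.

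\textbf{Step 3: the reflection-invariant case.} Under the weaker weight $(1+|x'|^n)^{-1}$, the uniqueness from Corollary~\ref{thm:FP.CCC} is unavailable, so I would use the Green representation instead. Take $G=E^L(x-y)-E^L(x-\overline y)$ from Theorem~\ref{FS-prop.INTR}, whose $y$-gradient has nontangential maximal function decaying like $|x'|^{-n}$ by \eqref{bouMNN.XXX} combined with the symmetry \eqref{GHCewd-22.RRe.5.D+M}. Applying Theorem~\ref{theor:div-thm} on ${\mathbb{R}}^n_{+}\setminus B(x,r)$ to the vector field built from $u$ and $G^{L^\top}(\cdot,x)$ yields $u(x)$ equal to a boundary integral of $u$'s trace against the conormal derivative of $G$. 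That conormal derivative is $P^L$ by \eqref{Ua-eD.LBBw.3B}. The shifted-trace approximation from Step 2 supplies the existence of the trace and its membership in the larger weighted $L^1$ class.

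\textbf{Main obstacle.} I expect the hardest part to be verifying the integrability hypotheses of the Divergence Theorem, namely $\mathcal N_\kappa$ of the vector field in $L^1$ near the boundary. These products involve $u\nabla G$ and $G\nabla u$. The $G\nabla u$ term lacks control of $\nabla u$ near the boundary, so I would first work with $u(\cdot+\varepsilon e_n)$ and then let $\varepsilon\to0$ using the weighted domination.
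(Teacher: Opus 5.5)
\textbf{Step~1 does not go through.} The only general bound on the Poisson extension is $\mathcal{N}_\kappa(P^L_t\ast f)\le C\mathcal{M}f$ from \eqref{exist:Nu-Mf}. Even truncated maximal operators fail to preserve $L^1(\mathbb{R}^{n-1},\omega\,dx')$ with $\omega(x'):=(1+|x'|^{n-1})^{-1}$, so your claim that Poisson extensions stay in this weighted class is false. Here is a counterexample with $L=\Delta$:
\begin{itemize}
\item Fix a unit vector $e'\in\mathbb{R}^{n-1}$ and set $f:=\sum_{k\geq1}2^{k(n-1)}k^{-2}\mathbf{1}_{B_{n-1}(2^ke',1)}$. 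Then $\int f\omega\approx\sum_k k^{-2}<\infty$.
\item Evaluating $P_t\ast f$ at the height $t=|x'-2^ke'|$ gives $\mathcal{N}_\kappa(P_t\ast f)(x')\gtrsim 2^{k(n-1)}k^{-2}|x'-2^ke'|^{1-n}$ on the annulus $2\le|x'-2^ke'|\le 2^{k-1}$.
\item On that annulus $\omega\approx 2^{-k(n-1)}$, so each annulus contributes $\approx k^{-1}$, and the annuli overlap at most pairwise. Hence the weighted integral diverges.
\end{itemize}
Splitting into local and far parts cannot rescue this, because the divergence comes from the logarithmic range of intermediate scales. For $f=f_\varepsilon=u(\cdot,\varepsilon)$ the membership does hold, but only because $P^L_t\ast f_\varepsilon=u(\cdot,\cdot+\varepsilon)$, which is what you are trying to prove. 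Invoking Corollary~\ref{thm:FP.CCC} at this point is therefore circular.

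The paper never uses uniqueness here. It obtains $u(x',t+\varepsilon)=(P^L_t\ast f_\varepsilon)(x')$ directly from the Green identity of Lemma~\ref{Lgav-TeD}. That lemma is applied to $u(\cdot+\varepsilon e_n)$, whose trace is $f_\varepsilon$ and whose nontangential maximal function is dominated by $\mathcal{N}_\kappa u$, paired with $v:=G^{L^\top}_{\cdot\gamma}(\cdot,x)$. The paper then uses \eqref{GHCewd-22.RRe.5}, \eqref{JKBvc-ut4} and \eqref{Ua-eD.LanTF}. After that, your Step~2 (weak compactness as in Lemma~\ref{ydadHBB}, then Proposition~\ref{thm:existence}) is exactly the paper's argument.

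\textbf{The Green-identity route has a missing idea.} This is the route you list as an alternative and use in Step~3, but your remedy for the obstacle you flagged is insufficient. After shifting you only get $|\nabla u^\varepsilon|\lesssim\varepsilon^{-1}\mathcal{N}_\kappa u$ near the boundary. Meanwhile, the nontangential maximal function of $G^{L^\top}(\cdot,x)$ away from the pole is only $O\big((1+\log_{+}|x'|)/(1+|x'|^{n-1})\big)$ in general. In the reflection-invariant case it is only $O\big((1+|x'|)^{1-n}\big)$. That is a logarithm, respectively a full power of $|x'|$, short of what the weights $(1+|x'|^{n-1})^{-1}$, respectively $(1+|x'|^{n})^{-1}$, allow. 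So the product of nontangential maximal functions controlling the $G\nabla u^\varepsilon$ term need not be integrable.

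The fix, which is the heart of Lemma~\ref{Lgav-TeD}, is to use the vanishing trace of the Green function:
\begin{itemize}
\item Integrating along a short path inside a narrower cone $\Gamma_{\kappa_o}(x')$ gives $|G^{L^\top}(y,x)|\le Cy_n\,\mathcal{N}_\kappa\big(\nabla_X G^{L^\top}(\cdot,x)\big)(x')$; compare \eqref{Cr654.A-HM.1}.
\item Interior estimates give $|\nabla u^\varepsilon(y)|\le Cy_n^{-1}\mathcal{N}_\kappa u(x')$; compare \eqref{Cr654.A-HM.2-bis}.
\end{itemize}
The factors of $y_n$ cancel. Both terms of the vector field are then controlled by $\mathcal{N}_\kappa u\cdot\mathcal{N}_\kappa(\nabla_X G^{L^\top})$. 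This is integrable because $\mathcal{N}_\kappa(\nabla_X G^{L^\top})$ decays like $|x'|^{1-n}$ by \eqref{bouMNN}, respectively like $|x'|^{-n}$ by \eqref{bouMNN.XXX} applied to $L^\top$. With this in place, both halves of the theorem follow from the same scheme.
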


This refines \cite[Theorem~6.1, p.\,956]{MMMM16}. An alternative proof, which makes essential use of 
Theorem~\ref{ta.av-GGG.2A}, is given in \cite{MMMMM-Fatou}. We also wish to remark that even in the classical 
case when $L:=\Delta$, the Laplacian in ${\mathbb{R}}^n$, Theorem~\ref{thm:FP} is more general 
(in the sense that it allows for a larger class of functions) than the existing results in the literature. 
Indeed, the latter typically assume an $L^p$ integrability condition for the harmonic function which, in the 
range $1<p<\infty$, implies our weighted $L^1$ integrability condition for the nontangential 
maximal function demanded in \eqref{jk-lm-jhR-LLL-HM-RN.w}. In this vein see, e.g.,  
\cite[Theorems~4.8-4.9, pp.\,174-175]{GCRF85}, \cite[Corollary, p.\,200]{St70},
\cite[Proposition~1, p.\,119]{Stein93}. 

A remarkable feature of Theorem~\ref{thm:FP} is that while its statement is phrased exclusively 
in terms of the Agmon-Douglis-Nirenberg Poisson kernel $P^L$, its proof is actually carried out 
largely in terms of the Green function associated with the system $L$ in the upper-half space. 

It is of interest to identifying a rather inclusive setting in which the current machinery yields well-posedness results. 
With ${\mathcal{M}}$ denoting the Hardy-Littlewood maximal operator in ${\mathbb{R}}^{n-1}$, for each $m\in{\mathbb{N}}$ 
consider the linear space 
\begin{align}\label{76tFfaf-7GF}
{\mathscr{Z}}_m:=\Big\{f:{\mathbb{R}}^{n-1}\to{\mathbb{C}}:\,\text{ ${\mathscr{L}}^{n-1}$-measurable and }\,\,
{\mathcal{M}}f\in L^1\big({\mathbb{R}}^{n-1}\,,\,\tfrac{dx'}{1+|x'|^{m}}\big)\Big\}
\end{align}
equipped with the norm 
\begin{align}\label{76tFfaf-7GF.2}
\|f\|_{{\mathscr{Z}_m}}:=\|{\mathcal{M}}f\|_{L^1\big({\mathbb{R}}^{n-1},\frac{dx'}{1+|x'|^{m}}\big)},
\qquad\forall\,f\in{\mathscr{Z}}_m.
\end{align}

\begin{corollary}\label{Them-Gen}
Let $L$ be an $M\times M$ system with constant complex coefficients as in \eqref{L-def}-\eqref{L-ell.X}, 
and fix an aperture parameter $\kappa>0$. Then the following boundary-value problem is well-posed:
\begin{equation}\label{jk-lm-jhR-LLL-HM-RN.w.BVP}
\left\{
\begin{array}{l}
u\in\big[{\mathscr{C}}^{\infty}({\mathbb{R}}^n_{+})\big]^M,
\quad Lu=0\,\,\text{ in }\,\,{\mathbb{R}}^n_{+},
\\[8pt]
\displaystyle
\int_{\mathbb{R}^{n-1}}\big({\mathcal{N}}_{\kappa}u\big)(x')\,\frac{dx'}{1+|x'|^{n-1}}<\infty,
\\[12pt]
u\big|^{{}^{\kappa-{\rm n.t.}}}_{\partial{\mathbb{R}}^n_{+}}=f\in[{\mathscr{Z}}_{n-1}]^M.
\end{array}
\right.
\end{equation}

Furthermore, under the additional assumption that $L$ is also reflection invariant,
the following boundary-value problem is well-posed as well:
\begin{equation}\label{jk-lm-jhR-LLL-HM-RN.w.BVP.D+M}
\left\{
\begin{array}{l}
u\in\big[{\mathscr{C}}^{\infty}({\mathbb{R}}^n_{+})\big]^M,
\quad Lu=0\,\,\text{ in }\,\,{\mathbb{R}}^n_{+},
\\[8pt]
\displaystyle
\int_{\mathbb{R}^{n-1}}\big({\mathcal{N}}_{\kappa}u\big)(x')\,\frac{dx'}{1+|x'|^{n}}<\infty,
\\[12pt]
u\big|^{{}^{\kappa-{\rm n.t.}}}_{\partial{\mathbb{R}}^n_{+}}=f\in[{\mathscr{Z}}_{n}]^M.
\end{array}
\right.
\end{equation}
\end{corollary}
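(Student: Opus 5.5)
Uniqueness in both problems follows from the uniqueness results already at hand. For \eqref{jk-lm-jhR-LLL-HM-RN.w.BVP}, if $u$ solves the problem with $f=0$, then Corollary~\ref{thm:FP.CCC} gives $u\equiv 0$ directly. For \eqref{jk-lm-jhR-LLL-HM-RN.w.BVP.D+M}, a null solution satisfies the weaker integrability condition with weight $(1+|x'|^n)^{-1}$. Under reflection invariance I would invoke the second part of Theorem~\ref{thm:FP}: any such $u$ has the representation $u=P^L_t\ast\big(u|^{\kappa-{\rm n.t.}}_{\partial{\mathbb{R}}^n_{+}}\big)$, so $u\equiv0$ when the boundary trace vanishes. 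The same theorem also settles uniqueness for the first problem. Note that the second problem still needs Legendre--Hadamard ellipticity for $P^L$ to exist, and that is exactly what the hypotheses assume.

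For existence, given $f\in[{\mathscr{Z}}_m]^M$ with $m\in\{n-1,n\}$, I would set $u(x',t):=(P^L_t\ast f)(x')$. The properties of the Agmon-Douglis-Nirenberg kernel in Theorem~\ref{ya-T4-fav} give $|P^L(x')|\le C(1+|x'|)^{-n}$, and they also give $Lu=0$ and smoothness, since the convolution converges because $f\in L^1(dx'/(1+|x'|^n))$. Membership $f\in{\mathscr{Z}}_m$ forces this, because ${\mathcal{M}}f\ge|f|$ a.e. and $m\le n$.

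The main step is the nontangential maximal function estimate. Because $P^L$ is an integrable radially decreasing-type majorant, the standard argument gives the pointwise bound $({\mathcal{N}}_\kappa u)(x')\le C_\kappa({\mathcal{M}}f)(x')$ for all $x'$. This is the usual estimate $\sup_{|y'-x'|<\kappa t}|P_t\ast f(y')|\lesssim{\mathcal{M}}f(x')$, obtained by dominating $P^L_t(y'-\cdot)$ by a constant times $(1+\kappa)^n$ times the radial decreasing function $t^{1-n}(1+|x'-\cdot|/t)^{-n}$. Integrating against $dx'/(1+|x'|^m)$ then yields the required finiteness, precisely because $\|f\|_{{\mathscr{Z}}_m}<\infty$.

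Finally, I need $u|^{\kappa-{\rm n.t.}}_{\partial{\mathbb{R}}^n_{+}}=f$ a.e. I would argue this by the usual approximate-identity reasoning. Since $\int P^L=I$, split $f=f\mathbf 1_{B(0,R)}+f\mathbf 1_{{\mathbb{R}}^{n-1}\setminus B(0,R)}$. The local piece is in $L^1$, so its Poisson extension has nontangential limit equal to it a.e. This follows from the maximal bound together with density of continuous compactly supported functions. The far piece contributes $O(t)$-small, in fact vanishing, nontangential limits at points of $B(0,R/2)$ by the kernel decay. Letting $R\to\infty$ completes existence.

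I expect the only delicate point to be the pointwise domination of ${\mathcal{N}}_\kappa u$ by ${\mathcal{M}}f$ for a matrix-valued kernel with merely $(1+|x'|)^{-n}$ decay. Everything else reduces to Theorem~\ref{thm:FP} and Corollary~\ref{thm:FP.CCC}.
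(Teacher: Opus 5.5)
Your proposal is correct and follows essentially the paper's proof. Existence comes from $u:=P^L_t\ast f$ with $\mathcal{N}_\kappa u\le C\mathcal{M}f$, which gives the weighted bound by $\|f\|_{\mathscr{Z}_m}$. Uniqueness comes from the Green-function uniqueness results: the paper cites \eqref{GHUNNDs} and \eqref{GHUNNDs.D+M} where you cite Corollary~\ref{thm:FP.CCC} and the second half of Theorem~\ref{thm:FP}, and these are equivalent here. The pointwise domination and a.e. nontangential convergence you re-derive by hand are exactly the content of Proposition~\ref{thm:existence}, so the step you flag as delicate is already available in the paper.
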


As a very special case, \eqref{jk-lm-jhR-LLL-HM-RN.w.BVP.D+M} contains the well-posedness of the $L^p$-Dirichlet problem, with $1<p<\infty$, 
for the Laplacian in the upper-half space, via a proof which does not make use of the Maximum Principle (as in Stein's 1970 book \cite{St70}), 
or Schwarz's Reflection Principle (as in the 1991 book of Garc{\'\i}a-Cuerva and Rubio de Francia \cite{GCRF85}).

The relevance of the fact that \eqref{jk-lm-jhR-LLL-HM-RN.w} 
implies \eqref{Tafva.2222} in the context of the boundary value problem formulated in \eqref{jk-lm-jhR-LLL-HM-RN.w.BVP} 
is that the nontangential boundary trace $u\big|^{{}^{\kappa-{\rm n.t.}}}_{\partial{\mathbb{R}}^n_{+}}$ is guaranteed to exist 
by the other conditions imposed on the function $u$ in the formulation of said problems, 
and that the solution may be recovered from the boundary datum via convolution 
with the Poisson kernel canonically associated with the system $L$.
Similar remarks apply in relation to the fact that \eqref{jk-lm-jhR-LLL-HM-RN.w.BIS} implies \eqref{Tafva.2222.BIS}, for the 
boundary value problem formulated in \eqref{jk-lm-jhR-LLL-HM-RN.w.BVP.D+M}.

The type of boundary value problems treated here, in which the size of the solution is measured in 
terms of its nontangential maximal function and its trace is taken in a nontangential 
pointwise sense, has been dealt with in the particular case when $L=\Delta$, 
the Laplacian in ${\mathbb{R}}^n$, in a number of monographs, including \cite{ABR}, \cite{GCRF85}, 
\cite{St70}, \cite{Stein93}, and \cite{StWe71}. In all these works, the existence part 
makes use of the explicit form of the harmonic Poisson kernel, 
while the uniqueness relies on either the Maximum Principle, or the Schwarz reflection 
principle for harmonic functions. Neither of the latter techniques may be adapted
successfully to prove uniqueness in the case of generic strongly elliptic systems treated here, and 
our approach is more in line with the work in \cite{MMMM16} (which involves 
Green function estimates and a sharp version of the Divergence Theorem), with some 
significant refinements. A remarkable aspect is that our approach works for the entire 
class of elliptic systems $L$ as in \eqref{L-def}-\eqref{L-ell.X}. 

In closing, we note that constructing Green functions in relation to various classes of 
partial differential operators (or systems) in certain geometric settings remains a relevant topic of active research, 
and that additional information and references may be found in \cite{DK21}, \cite{HK07}, \cite{HK20}, \cite{OKB15}, \cite{TKB12}.

\section{Preliminaries}
\setcounter{equation}{0}
\label{S-2}

Throughout, ${\mathbb{N}}$ stands for the collection of all strictly positive
integers, and ${\mathbb{N}}_0:={\mathbb{N}}\cup\{0\}$. As such, for each $k\in\mathbb{N}$,
we denote by $\mathbb{N}_0^k$ the collection of all multi-indices $\alpha=(\alpha_1,\dots,\alpha_k)$
with $\alpha_j\in\mathbb{N}_0$ for $1\leq j\leq k$. Also, fix $n\in{\mathbb{N}}$ with $n\geq 2$.
We shall work in the upper-half space ${\mathbb{R}}^{n}_{+}$, whose topological boundary 
$\partial{\mathbb{R}}^{n}_{+}={\mathbb{R}}^{n-1}\times\{0\}$ will be frequently identified 
with the horizontal hyperplane ${\mathbb{R}}^{n-1}$ via $(x',0)\equiv x'$. 
The origin in ${\mathbb{R}}^{n-1}$ is denoted by $0'$
and we let $B_{n-1}(x',r)$ stand for the $(n-1)$-dimensional Euclidean ball of radius $r$
centered at $x'\in{\mathbb{R}}^{n-1}$. Having fixed $\kappa>0$, for each boundary point
$x'\in\partial{\mathbb{R}}^{n}_{+}$ introduce the conical nontangential approach region
with vertex at $x'$ as
\begin{equation}\label{NT-1}
\Gamma_\kappa(x'):=\big\{y=(y',t)\in{\mathbb{R}}^{n}_{+}:\,|x'-y'|<\kappa\,t\big\}.
\end{equation}
Given a vector-valued function $u:{\mathbb{R}}^{n}_{+}\to{\mathbb{C}}^M$,
the nontangential maximal function of $u$ is defined by
\begin{equation}\label{NT-Fct}
\big({\mathcal{N}}_\kappa u\big)(x'):=\|u\|_{L^\infty(\Gamma_\kappa(x'))},\qquad
\forall\,x'\in\partial{\mathbb{R}}^{n}_{+}\equiv{\mathbb{R}}^{n-1}, 
\end{equation}
where the essential supremum norm is taken with respect to the Lebesgue measure ${\mathscr{L}}^n$. 
Whenever meaningful, we also define the nontangential trace of a function $u$ which is continuous near 
the boundary of ${\mathbb{R}}^n_{+}$ as 
\begin{equation}\label{nkc-EE-2}
\big(u\big|^{{}^{\kappa-{\rm n.t.}}}_{\partial{\mathbb{R}}^{n}_{+}}\big)(x')
:=\lim_{\Gamma_{\kappa}(x')\ni y\to (x',0)}u(y)
\quad\text{for }\,x'\in\partial{\mathbb{R}}^{n}_{+}\equiv{\mathbb{R}}^{n-1}.
\end{equation}
In the sequel, we shall need to consider a localized version of the
nontangential maximal operator. Specifically, given any
$E\subset{\mathbb{R}}^n_{+}$, for each $u:E\to{\mathbb{C}}^M$
we set
\begin{equation}\label{NT-Fct.23}
\big({\mathcal{N}}^E_\kappa u\big)(x'):=\|u\|_{L^\infty(\Gamma_\kappa(x')\cap E)},\qquad
\forall\,x'\in\partial{\mathbb{R}}^{n}_{+}\equiv{\mathbb{R}}^{n-1}.
\end{equation}
Hence, ${\mathcal{N}}^E_\kappa u={\mathcal{N}}_\kappa\widetilde{u}$ 
where $\widetilde{u}$ is the extension of $u$ to ${\mathbb{R}}^n_{+}$
by zero outside $E$. In the scenario when $u$ is defined in 
the entire upper-half space ${\mathbb{R}}^n_{+}$ to begin with, we may 
therefore write 
\begin{equation}\label{NT-Fct.23PPPP}
{\mathcal{N}}^E_\kappa u={\mathcal{N}}_\kappa({\bf 1}_E u).
\end{equation}

Later on, we shall also need the following result.

\begin{proposition}\label{p3.2.6}
For each number $\kappa>0$ and exponent $p\in(0,\infty)$ there exists 
some finite constant $C=C(n,p,\kappa)>0$ with the property that 
for each measurable set $E\subseteq{\mathbb{R}}^n_{+}$ and 
each measurable function $u:{\mathbb{R}}^n_{+}\to{\mathbb{C}}$ one has
\begin{equation}\label{3.2.38-BIS}
\|u\|_{L^{\frac{np}{n-1}}(E)}
\leq C\big\|\mathcal{N}^E_\kappa u\big\|_{L^p({\mathbb{R}}^{n-1})}.
\end{equation}
\end{proposition}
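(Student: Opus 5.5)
The plan is to reduce \eqref{3.2.38-BIS} to a pointwise domination of $|u|$ on $E$ by averages of $\mathcal{N}^E_\kappa u$ over boundary balls, and then to a weak-type layer-cake argument. Replacing $u$ by ${\bf 1}_E u$ and using \eqref{NT-Fct.23PPPP}, we may assume $E={\mathbb{R}}^n_{+}$ and $u$ vanishes off $E$. Set $F:=\mathcal{N}_\kappa u$ and assume $\|F\|_{L^p({\mathbb{R}}^{n-1})}<\infty$, since otherwise there is nothing to prove.

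\textbf{Step 1 (pointwise domination).} Fix $(y',t)\in{\mathbb{R}}^n_{+}$. For every $x'\in B_{n-1}(y',\kappa t)$ the point $(y',t)$ lies in the open cone $\Gamma_\kappa(x')$. The cone is open, so a small neighborhood of $(y',t)$ also lies in it. Hence, for ${\mathscr{L}}^n$-a.e. point $(y',t)$ one has
\[
|u(y',t)|\leq F(x')\quad\text{for every }x'\in B_{n-1}(y',\kappa t).
\]
To make this rigorous with essential suprema, I would work with Lebesgue points of $|u|$. At such a point the value $|u(y',t)|$ is a limit of averages over small balls contained in $\Gamma_\kappa(x')$, and each of those averages is at most $F(x')$. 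Consequently, for a.e. $(y',t)$,
\[
|u(y',t)|\leq \inf_{B_{n-1}(y',\kappa t)}F .
\]

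\textbf{Step 2 (distribution function).} For $\lambda>0$, Step 1 shows that if $|u(y',t)|>\lambda$ then the whole ball $B_{n-1}(y',\kappa t)$ is contained in $O_\lambda:=\{F>\lambda\}$. This set is open, because $F$ is lower semicontinuous: if $F(x')>\lambda$ then some positive-measure subset of $\Gamma_\kappa(x')$ with $|u|>\lambda$ lies inside $\Gamma_\kappa(z')$ for all $z'$ near $x'$. Two consequences follow:
\[
y'\in O_\lambda \quad\text{and}\quad \kappa t\leq c_n\,|O_\lambda|^{1/(n-1)} .
\]
The second holds because a ball of radius $\kappa t$ fits inside $O_\lambda$. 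Therefore
\[
{\mathscr{L}}^n(\{|u|>\lambda\})\leq |O_\lambda|\cdot C_{n,\kappa}|O_\lambda|^{1/(n-1)}=C\,|O_\lambda|^{\frac{n}{n-1}} .
\]

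\textbf{Step 3 (integration).} With $q=\frac{np}{n-1}$, write
\[
\|u\|_{L^q}^q=q\int_0^\infty\lambda^{q-1}{\mathscr{L}}^n(\{|u|>\lambda\})\,d\lambda
\leq C\int_0^\infty\lambda^{q-1}|O_\lambda|^{\frac{n}{n-1}}\,d\lambda .
\]
By Chebyshev, $|O_\lambda|\leq \lambda^{-p}\|F\|_p^p$. Splitting $|O_\lambda|^{\frac{n}{n-1}}=|O_\lambda|\cdot|O_\lambda|^{\frac1{n-1}}$ and applying Chebyshev only to the second factor gives
\[
\lambda^{q-1}|O_\lambda|^{\frac{n}{n-1}}\leq \lambda^{q-1-\frac{p}{n-1}}\|F\|_p^{\frac{p}{n-1}}\,|O_\lambda|
=\lambda^{p-1}\|F\|_p^{\frac{p}{n-1}}\,|O_\lambda| ,
\]
where $q-1-\frac{p}{n-1}=p-1$. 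Integrating in $\lambda$ yields
\[
\|u\|_q^q\leq C\,\|F\|_p^{\frac{p}{n-1}}\|F\|_p^p=C\,\|F\|_p^{\frac{np}{n-1}} ,
\]
which is exactly \eqref{3.2.38-BIS}.

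The main obstacle is the measure-theoretic justification in Steps 1 and 2: passing from essential suprema over cones to a pointwise a.e. bound, and verifying that $O_\lambda$ is open (or at least measurable, with the ball-containment property holding a.e.). Once the bound $|u|\leq\inf_{B_{n-1}(y',\kappa t)}\mathcal{N}_\kappa u$ a.e. is secured, the remaining steps are routine.
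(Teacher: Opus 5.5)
Your proof is correct. The reduction to $E={\mathbb{R}}^n_{+}$ via \eqref{NT-Fct.23PPPP} is exactly what the paper does. For the half-space case, however, the paper simply invokes a more general estimate from \cite[Proposition~3.24]{HoMiTa10}, whereas you give a self-contained argument built on the explicit cone geometry. Your a.e. bound $|u(y',t)|\leq\inf_{B_{n-1}(y',\kappa t)}\mathcal{N}_\kappa u$ places $\{|u|>\lambda\}$ inside the tent over $O_\lambda=\{\mathcal{N}_\kappa u>\lambda\}$. The ``base times height'' bound ${\mathscr{L}}^n(\{|u|>\lambda\})\lesssim|O_\lambda|^{n/(n-1)}$ then needs no Whitney-type decomposition. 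Finally, applying Chebyshev only to the surplus factor $|O_\lambda|^{1/(n-1)}$ closes the layer-cake computation; this is essentially the embedding of $L^p$ into the Lorentz space $L^{p,q}$ for $q\geq p$. The exponent count $q-1-\frac{p}{n-1}=p-1$ is right, and the constant visibly depends only on $n,p,\kappa$. The citation buys generality, while your route buys transparency and a fully elementary proof.

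One small point in Step~1 needs tidying: a merely measurable $u$ need not be locally integrable, so Lebesgue points of $|u|$ are not automatically available. This is harmless. Since $\mathcal{N}_\kappa u\in L^p({\mathbb{R}}^{n-1})$ is finite a.e., every point $(y',t)$ lies in some open cone $\Gamma_\kappa(x')$ with $(\mathcal{N}_\kappa u)(x')<\infty$, so $u\in L^\infty_{\rm loc}({\mathbb{R}}^n_{+})$. Alternatively, you can argue with truncations $\min\{|u|,N\}$ or with points of approximate continuity.
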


\begin{proof}
When $E={\mathbb{R}}^n_{+}$, this is a particular case of a more general 
estimate proved in \cite[Proposition~3.24, p.\,2647]{HoMiTa10}. As stated, 
\eqref{3.2.38-BIS} then follows from this and \eqref{NT-Fct.23PPPP}.
\end{proof}

The action of the Hardy-Littlewood maximal operator in $\mathbb{R}^{n-1}$ 
on any Lebesgue measurable function $f$ defined in ${\mathbb{R}}^{n-1}$ is given by 
\begin{equation}\label{MMax}
\big(\mathcal{M}f\big)(x'):=\sup_{r>0}\aver{B_{n-1}(x',r)}|f|\,d{\mathscr{L}}^{n-1},\qquad 
\forall\,x'\in\mathbb{R}^{n-1},
\end{equation}
where the barred integral denotes mean average (for functions which are $\mathbb{C}^M$-valued
the average is taken componentwise). Also, we shall follow the customary notation
$A\approx B$ in order to indicate that each quantity $A,B$ is dominated by a fixed multiple
of the other (via constants independent of the essential parameters intervening in $A,B$).
Recall that $\log_{+}t:=\max\big\{0\,,\,\ln t\big\}$ for each $t\in(0,\infty)$.
For a proof of the following lemma the reader is referred to \cite[Lemma~2.1]{MMMM16}.

\begin{lemma}\label{lemma:M-ball}
Consider $f:\mathbb{R}^{n-1}\to\mathbb{R}$ given by $f(x'):=(1+|x'|)^{1-n}$ for each $x'\in\mathbb{R}^{n-1}$. 
Then 
\begin{equation}\label{eq:M2-ball}
\big(\mathcal{M}f\big)(x')\approx\frac{1+\log_{+}|x'|}{1+|x'|^{n-1}},\qquad x'\in{\mathbb{R}}^{n-1},
\end{equation}
where the implicit constants depend only on $n$.
\end{lemma}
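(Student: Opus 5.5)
The statement to prove is Lemma~\ref{lemma:M-ball}: for $f(x')=(1+|x'|)^{1-n}$ one has $\mathcal{M}f(x')\approx(1+\log_{+}|x'|)/(1+|x'|^{n-1})$. The plan is to prove the two-sided estimate separately in the regions $|x'|\leq 2$ and $|x'|\geq 2$. For $|x'|\leq 2$ the right-hand side is comparable to $1$. Since $0<f\leq 1$, we get $\mathcal{M}f\leq 1$. For the lower bound, averaging over $B_{n-1}(x',1)$, where $f\geq 4^{1-n}$, gives $\mathcal{M}f(x')\gtrsim 1$.

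Now fix $|x'|=R\geq 2$; the target is $\mathcal{M}f(x')\approx R^{1-n}(1+\ln R)$.

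\emph{Lower bound.} We take the ball $B_{n-1}(x',2R)$, which contains $B_{n-1}(0',R)$. Passing to polar coordinates gives
\[
\int_{B_{n-1}(0',R)}(1+|z'|)^{1-n}\,dz'\approx\int_0^R\frac{r^{n-2}}{(1+r)^{n-1}}\,dr\approx 1+\ln R .
\]
Dividing by the volume of the ball, which is $\approx R^{n-1}$, yields $\mathcal{M}f(x')\gtrsim R^{1-n}(1+\ln R)$.

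\emph{Upper bound.} We fix an arbitrary ball $B=B_{n-1}(x',r)$ and split according to the size of $r$.
\begin{itemize}
\item If $r\leq R/2$, then $|z'|\geq R/2$ on $B$. Hence $f\lesssim R^{1-n}$ there, and the average is $\lesssim R^{1-n}$.
\item If $r>R/2$, then $B\subseteq B_{n-1}(0',3r)$. The same polar computation bounds the average by
\[
C\,r^{1-n}(1+\ln(3r))=C\,\phi(r),\qquad \phi(r):=r^{1-n}(1+\ln(3r)).
\]
\end{itemize}

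The main point, though elementary, is to check that $\phi$ is essentially decreasing for $r\geq 1$. Its logarithmic derivative is $(1-n)/r+1/\bigl(r(1+\ln 3r)\bigr)$, which is negative because $n\geq 2$ and $1+\ln 3r>1$. So $\phi$ is decreasing on $[1,\infty)$. Since $R/2\geq 1$, we get $\phi(r)\leq\phi(R/2)\lesssim R^{1-n}(1+\ln R)$ for all $r>R/2$. Taking the supremum over $r$ gives the upper bound.

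Finally, for $R\geq 2$ we have $1+\log_{+}R\approx 1+\ln R$ and $1+R^{n-1}\approx R^{n-1}$. All constants above depend only on $n$, so the two regions combine to give \eqref{eq:M2-ball}.
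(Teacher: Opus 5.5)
Your proof is correct and complete. The reduction to $|x'|\le 2$ versus $|x'|=R\ge 2$ works, and so does the test ball $B_{n-1}(x',2R)\supseteq B_{n-1}(0',R)$ for the lower bound. For the upper bound, the split $r\le R/2$ versus $r>R/2$ together with the monotonicity of $\phi(r)=r^{1-n}(1+\ln 3r)$ on $[1,\infty)$ is valid, and all constants depend only on $n$. The case $n=2$, where the ``polar'' integral is just $2\int_0^R(1+s)^{-1}\,ds$, is covered by the same formulas. The paper gives no argument of its own here and simply refers to \cite[Lemma~2.1]{MMMM16}, so your direct computation serves as a self-contained substitute for that citation.
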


We also recall a useful weak compactness result \cite[Lemma~4.6.1, p.\,822]{GHA.III}. 

\begin{lemma}\label{ydadHBB}
Let $\omega:{\mathbb{R}}^{n-1}\to(0,\infty)$ be a Lebesgue measurable function and
consider a sequence $\{f_j\}_{j\in{\mathbb{N}}}\subset L^1({\mathbb{R}}^{n-1}\,,\omega{\mathscr{L}}^{n-1})$ such that 
\begin{equation}\label{JGTrad}
F:=\sup\limits_{j\in{\mathbb{N}}}|f_j|\in L^1({\mathbb{R}}^{n-1}\,,\omega{\mathscr{L}}^{n-1}).
\end{equation}

Then there exists a subsequence $\big\{f_{j_k}\big\}_{k\in{\mathbb{N}}}$ of $\{f_j\}_{j\in{\mathbb{N}}}$
and a function $f\in L^1({\mathbb{R}}^{n-1}\,,\omega{\mathscr{L}}^{n-1})$ with the property that
\begin{equation}\label{eq:16t44}
\begin{array}{c}
\displaystyle
\int_{{\mathbb{R}}^{n-1}}f_{j_k}\varphi\,\omega\,d{\mathscr{L}}^{n-1}\longrightarrow
\int_{{\mathbb{R}}^{n-1}}f\varphi\,\omega\,d{\mathscr{L}}^{n-1}\,\,\text{ as }\,\,k\to\infty,
\\[10pt]
\text{for every function }\,\,\varphi\in{\mathscr{C}}^0({\mathbb{R}}^{n-1})\cap L^\infty({\mathbb{R}}^{n-1},{\mathscr{L}}^{n-1}).
\end{array}
\end{equation}
\end{lemma}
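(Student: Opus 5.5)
The plan is to reduce the statement to sequential weak$^*$ compactness of the unit ball of an $L^\infty$ space, by absorbing the common majorant $F$ into the measure. This avoids invoking the Dunford--Pettis theorem and the Eberlein--\v{S}mulian theorem. Set $\mu:=F\,\omega\,{\mathscr{L}}^{n-1}$. By \eqref{JGTrad} this is a finite, nonnegative measure on the Lebesgue measurable subsets of ${\mathbb{R}}^{n-1}$.

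For each $j$, define $h_j:=f_j/F$ on the set $\{F>0\}$ and $h_j:=0$ on $\{F=0\}$. Since $|f_j|\leq F$ pointwise, each $h_j$ is measurable with $|h_j|\leq 1$. Moreover $f_j=h_jF$ holds ${\mathscr{L}}^{n-1}$-a.e., because $f_j=0$ wherever $F=0$. Hence $\{h_j\}_{j\in{\mathbb{N}}}$ lies in the closed unit ball of $L^\infty({\mathbb{R}}^{n-1},\mu)$, which is the dual of $L^1({\mathbb{R}}^{n-1},\mu)$ because $\mu$ is finite.

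The only point needing care is that $L^1({\mathbb{R}}^{n-1},\mu)$ is separable. This holds because $\mu$ is a finite measure defined on (the completion of) the Borel $\sigma$-algebra of ${\mathbb{R}}^{n-1}$, which is countably generated. Consequently, simple functions built from finite unions of rational boxes, with coefficients in ${\mathbb{Q}}+i{\mathbb{Q}}$, are dense. Separability of the predual makes the closed unit ball of the dual weak$^*$ sequentially compact, by Banach--Alaoglu together with metrizability of the ball. We may therefore extract a subsequence $\{h_{j_k}\}_{k\in{\mathbb{N}}}$ and some $h\in L^\infty({\mathbb{R}}^{n-1},\mu)$ with $\|h\|_{L^\infty(\mu)}\leq 1$ such that $\int h_{j_k}\psi\,d\mu\to\int h\psi\,d\mu$ for every $\psi\in L^1({\mathbb{R}}^{n-1},\mu)$. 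Modifying $h$ on a $\mu$-null set, we may assume $|h|\leq 1$ everywhere and $h=0$ on $\{F=0\}$.

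Now define $f:=hF$. Then $|f|\leq F$, so $f\in L^1({\mathbb{R}}^{n-1},\omega{\mathscr{L}}^{n-1})$. Given any $\varphi\in{\mathscr{C}}^0({\mathbb{R}}^{n-1})\cap L^\infty({\mathbb{R}}^{n-1},{\mathscr{L}}^{n-1})$, we have $\int|\varphi|\,d\mu\leq\|\varphi\|_{L^\infty}\|F\|_{L^1(\omega{\mathscr{L}}^{n-1})}<\infty$, so $\varphi\in L^1({\mathbb{R}}^{n-1},\mu)$. Taking $\psi=\varphi$ above gives
\begin{equation*}
\int_{{\mathbb{R}}^{n-1}}f_{j_k}\varphi\,\omega\,d{\mathscr{L}}^{n-1}=\int_{{\mathbb{R}}^{n-1}}h_{j_k}\varphi\,d\mu\longrightarrow\int_{{\mathbb{R}}^{n-1}}h\varphi\,d\mu=\int_{{\mathbb{R}}^{n-1}}f\varphi\,\omega\,d{\mathscr{L}}^{n-1},
\end{equation*}
which is \eqref{eq:16t44}. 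In fact this yields convergence against every bounded measurable $\varphi$, which is more than claimed.

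The only mildly delicate step is the separability of $L^1(\mu)$, which is what justifies extracting a subsequence rather than a net. If one prefers to avoid it, an alternative is a diagonal argument: fix a countable family of test functions that is dense in the appropriate sense, and use the uniform bound $|\int h_j\psi\,d\mu|\leq\|\psi\|_{L^1(\mu)}$ to pass from this family to all of $L^1(\mu)$.
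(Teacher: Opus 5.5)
Your proof is correct. The paper does not prove this lemma itself; it quotes it from \cite[Lemma~4.6.1, p.\,822]{GHA.III}. What you have written is therefore a self-contained substitute rather than a variant of an argument in the text.

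For comparison, there are two standard routes. One is Dunford--Pettis: the family is uniformly integrable in $L^1(\omega\mathscr{L}^{n-1})$ because it is dominated by $F$, and Eberlein--\v{S}mulian then extracts a weakly convergent subsequence. The other is Banach--Alaoglu for the measures $f_j\,\omega\,\mathscr{L}^{n-1}$ in the dual of $\mathscr{C}_0(\mathbb{R}^{n-1})$. This is followed by a tightness argument (again from $F$) to pass to bounded continuous test functions, and a domination argument showing that the limit measure has a density $f\omega$ with $|f|\leq F$. The restriction to $\varphi\in\mathscr{C}^0\cap L^\infty$ in the statement is exactly what the second route produces, and it is all that is used in the proof of Theorem~\ref{thm:FP}.

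Your change of measure, $\mu=F\omega\mathscr{L}^{n-1}$ and $f_j=h_jF$ with $|h_j|\leq 1$, needs much less machinery than either route. It makes the absolute continuity of the limit automatic and gives the pointwise bound $|f|\leq F$. It also gives as much as Dunford--Pettis does: convergence against every bounded measurable $\varphi$.

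A few details deserve a sentence in a final write-up.
\begin{itemize}
\item \emph{Separability.} The fact that really drives separability of $L^1(\mu)$ is $\mu\ll\mathscr{L}^{n-1}$. Every Lebesgue measurable set then agrees with a Borel set up to a $\mu$-null set. The Borel $\sigma$-algebra is generated by the countable algebra generated by rational boxes, so the approximation lemma for finite measures gives density of your simple functions.
\item \emph{Redefining $h$.} Changing $h$ on a $\mu$-null set $Z$ does not change $f=hF$ as an element of $L^1(\omega\mathscr{L}^{n-1})$. Indeed, $\int_Z F\omega\,d\mathscr{L}^{n-1}=0$ together with $\omega>0$ forces $F=0$ a.e. on $Z$.
\item \emph{Your closing alternative.} The diagonal argument does not actually avoid separability, since it needs a countable dense family. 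If you want to avoid separability, argue in $L^2(\mu)$ instead. The sequence $\{h_j\}$ is bounded in the Hilbert space $L^2(\mu)$ because $\mu$ is finite, so it has a weakly convergent subsequence there. The closed convex set $\{h:|h|\leq 1\ \mu\text{-a.e.}\}$ is weakly closed, so the limit stays in it. Finally, test against bounded measurable $\varphi$, which lie in $L^2(\mu)$.
\item \emph{Vector-valued case.} The paper applies the lemma to $\mathbb{C}^M$-valued functions. Your argument carries over verbatim with $h_j$ taking values in the closed unit ball of $\mathbb{C}^M$.
\end{itemize}
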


We next discuss the notion of Poisson kernel in ${\mathbb{R}}^n_{+}$
for an operator $L$ as in \eqref{L-def}-\eqref{L-ell.X}.

\begin{definition}\label{defi:Poisson}
Let $L$ be an $M\times M$ system with constant complex coefficients as in \eqref{L-def}-\eqref{L-ell.X}.
A {\tt Poisson kernel} for $L$ in $\mathbb{R}^{n}_{+}$ is a matrix-valued function
\begin{equation}\label{uahgab-Uan}
P^L=\big(P^L_{\alpha\beta}\big)_{1\leq\alpha,\beta\leq M}:\mathbb{R}^{n-1}\longrightarrow\mathbb{C}^{M\times M}
\end{equation}
such that the following conditions hold:
\begin{list}{(\theenumi)}{\usecounter{enumi}\leftmargin=.8cm
\labelwidth=.8cm\itemsep=0.2cm\topsep=.1cm
\renewcommand{\theenumi}{\alph{enumi}}}
\item there exists $C\in(0,\infty)$ such that
$\displaystyle|P^L(x')|\leq\frac{C}{(1+|x'|^2)^{\frac{n}2}}$ for each
$x'\in\mathbb{R}^{n-1}$;
\item the function $P^L$ is Lebesgue measurable and
$\displaystyle\int_{\mathbb{R}^{n-1}}P^L(x')\,dx'=I_{M\times M}$,
the $M\times M$ identity matrix;
\item if $K^L(x',t):=P^L_t(x'):=t^{1-n}P^L(x'/t)$, for each
$x'\in\mathbb{R}^{n-1}$ and $t\in(0,\infty)$, then the function
$K^L=\big(K^L_{\alpha\beta}\big)_{1\leq\alpha,\beta\leq M}$
satisfies {\rm (}in the sense of distributions{\rm )}
\begin{equation}\label{uahgab-UBVCX}
LK^L_{\cdot\beta}=0\,\,\text{ in }\,\,\mathbb{R}^{n}_{+}
\,\,\text{ for each }\,\,\beta\in\{1,\dots,M\},
\end{equation}
where $K^L_{\cdot\beta}:=\big(K^L_{\alpha\beta}\big)_{1\leq\alpha\leq M}$.
\end{list}
\end{definition}

\vskip 0.06in
\begin{remark}\label{Ryf-uyf}
The following comments pertain to Definition~\ref{defi:Poisson}.
\begin{list}{\textit{(\theenumi)}}{\usecounter{enumi}\leftmargin=.8cm
\labelwidth=.8cm\itemsep=0.2cm\topsep=.1cm
\renewcommand{\theenumi}{\roman{enumi}}}
\item Condition {\it (a)} ensures that the integral in part $(b)$ is
absolutely convergent.
\item Condition {\it (c)} and the ellipticity of the operator $L$ ensure
(cf. \cite[Theorem~10.9, pp.\,363-364]{DMit18}) that
$K^L\in{\mathscr{C}}^\infty(\mathbb{R}^{n}_{+})$. In particular, \eqref{uahgab-UBVCX}
holds in a pointwise sense. Also, given that $P^L(x')=K^L(x',1)$ for each
$x'\in{\mathbb{R}}^{n-1}$, we deduce that $P^L\in{\mathscr{C}}^\infty(\mathbb{R}^{n-1})$.
\item Condition {\it (b)} is equivalent to $\lim\limits_{t\to 0^{+}}P^L_t(x')
=\delta_{0'}(x')\,I_{M\times M}$ in $\big[{\mathcal{D}}'({\mathbb{R}}^{n-1})\big]^{M\times M}$,
where $\delta_{0'}$ is Dirac's distribution with mass at the origin $0'$
of ${\mathbb{R}}^{n-1}$.
\item For all $x\in{\mathbb{R}}^n_{+}$ and $\lambda>0$ we have $K^L(\lambda x)=\lambda^{1-n}K^L(x)$.
\end{list}
\end{remark}

Poisson kernels for elliptic boundary value problems in a half-space have
been studied extensively in \cite{ADNI}, \cite{ADNII}, \cite[\S{10.3}]{KMR2},
\cite{Sol}, \cite{Sol1}, \cite{Sol2}. Here we record a corollary of more general
work done by S.\,Agmon, A.\,Douglis, and L.\,Nirenberg in \cite{ADNII}.

\begin{theorem}\label{ya-T4-fav}
Any $M\times M$ system $L$ with constant complex coefficients as in \eqref{L-def}-\eqref{L-ell.X} has a Poisson
kernel $P^L$ in the sense of Definition~\ref{defi:Poisson}, which has the additional
property that the function
\begin{equation}\label{eq:KDEF}
K^L(x',t):=P^L_t(x')\quad\text{for all }\,\,(x',t)\in{\mathbb{R}}^n_{+},
\end{equation}
satisfies $K^L\in\big[{\mathscr{C}}^\infty\big(\overline{{\mathbb{R}}^n_{+}}
\setminus B(0,\varepsilon)\big)\big]^{M\times M}$ for every $\varepsilon>0$, 
and has the property that for each multi-index $\alpha\in{\mathbb{N}}_0^n$
there exists $C_\alpha\in(0,\infty)$ such that
\begin{equation}\label{eq:KjG}
\big|(\partial^\alpha K^L)(x)\big|\leq C_\alpha\,|x|^{1-n-|\alpha|},\,\,\,\text{ for every }\,\,
x\in{\overline{{\mathbb{R}}^n_{+}}}\setminus\{0\}.
\end{equation}
\end{theorem}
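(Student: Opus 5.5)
The plan is to build $K^L$ explicitly through a partial Fourier transform in the tangential variables $x'$, following the Agmon--Douglis--Nirenberg scheme. All estimates will then be read off from homogeneity and from symbol-type bounds.

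\textbf{Step 1: the ODE problem in $t$.} For $\xi'\in{\mathbb{R}}^{n-1}\setminus\{0'\}$ consider the $M\times M$ matrix polynomial
\[
A(\xi',\tau):=\big(a^{\alpha\beta}_{rs}\zeta_r\zeta_s\big)_{\alpha,\beta},\qquad \zeta:=(\xi',\tau).
\]
By \eqref{L-ell.X}, $\det A(\xi',\tau)\neq 0$ for real $\tau$. A standard homotopy argument then places exactly $M$ roots, counted with multiplicity, in $\{{\rm Im}\,\tau>0\}$. For this one deforms $L$ through Legendre--Hadamard systems to $\Delta I_{M\times M}$, and no root can cross the real axis along the way; this is the only place where $n=2$ needs the strong, rather than weak, ellipticity.

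I then look for the matrix solution $v(\xi',t)$ of
\[
A(\xi',-i\partial_t)\,v=0 \ \text{ on } (0,\infty),\qquad v(\xi',0)=I_{M\times M},\qquad v \text{ bounded as } t\to\infty .
\]
The space of bounded solutions is $M$-dimensional. The Dirichlet data map from this space to ${\mathbb{C}}^M$ is injective, which is the Lopatinskii complementing condition. I would verify injectivity by an energy argument: if $w$ is a decaying solution with $w(0)=0$, pair $A(\xi',-i\partial_t)w=0$ with $\overline{w}$, integrate by parts on $(0,\infty)$, and use \eqref{L-ell.X} (through Plancherel in $t$ after extending $w$ by zero) to get $\int_0^\infty(|\xi'|^2|w|^2+|w'|^2)\,dt\leq 0$, hence $w=0$.

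It follows that $v$ exists, is unique, and admits the representation
\[
v(\xi',t)=\frac{1}{2\pi i}\oint_{\gamma(\xi')}e^{it\tau}A(\xi',\tau)^{-1}B(\xi',\tau)\,d\tau ,
\]
where $\gamma(\xi')$ encloses the upper roots and $B$ is a suitable matrix polynomial. From this representation, $v(\xi',t)=\Phi(\xi'/|\xi'|,t|\xi'|)$ with $\Phi$ smooth on $S^{n-2}\times[0,\infty)$. Moreover, all $\partial^\beta_{\xi'}\partial_t^j v$ obey
\[
\big|\partial^\beta_{\xi'}\partial_t^j v(\xi',t)\big|\leq C\,|\xi'|^{j-|\beta|}e^{-ct|\xi'|},
\]
since every $t$ produced by differentiating in $\xi'$ is absorbed by the exponential.

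\textbf{Step 2: definition and the Poisson-kernel properties.} Set $K^L(\cdot,t):=\mathcal{F}^{-1}_{\xi'\to x'}\,v(\cdot,t)$ for $t>0$; this is absolutely convergent thanks to the exponential decay. Then:
\begin{itemize}
\item $LK^L=0$ holds columnwise by construction.
\item The scaling $v(\lambda\xi',t/\lambda)=v(\xi',t)$ gives $K^L(\lambda x)=\lambda^{1-n}K^L(x)$, so $K^L(x',t)=P^L_t(x')$ with $P^L:=K^L(\cdot,1)$.
\item $\int P^L=\widehat{P^L}(0')=\lim_{\xi'\to0'}\Phi(\xi'/|\xi'|,|\xi'|)=\Phi(\cdot,0)=I_{M\times M}$, which is condition (b).
\end{itemize}
Condition (a) will follow from \eqref{eq:KjG} with $\alpha=0$, since $|(x',1)|^{1-n}\approx(1+|x'|^2)^{-n/2}\cdot(1+|x'|^2)^{1/2}$. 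Here one in fact needs the sharper decay $|x'|^{-n}$ of $P^L$ at infinity. I would obtain it by integrating by parts $n$ times in $\xi'$ at $t=1$, using the symbol bounds above together with the smoothness of $v(\cdot,1)$ at $\xi'=0'$ modulo terms like $|\xi'|$, whose inverse Fourier transforms decay like $|x'|^{-n}$.

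\textbf{Step 3 (main obstacle): smoothness up to $t=0$ away from the origin and \eqref{eq:KjG}.} By homogeneity it suffices to show that $K^L$ is ${\mathscr{C}}^\infty$ with bounded derivatives on a neighbourhood in $\overline{{\mathbb{R}}^n_{+}}$ of each point of the unit half-sphere. The delicate points are those with $t=0$ and $|x'|=1$, because $v(\cdot,0)=I$ and so $K^L(\cdot,0)$ is formally $\delta_{0'}$.

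I would write $x'^{\gamma}\partial^\alpha K^L(x',t)$, for $|\gamma|$ large, as the inverse Fourier transform of
\[
\partial^\gamma_{\xi'}\big[(i\xi')^{\alpha'}\partial_t^{\alpha_n}v(\xi',t)\big].
\]
By the symbol bounds this is $O\big(|\xi'|^{|\alpha|-|\gamma|}\big)$ uniformly in $t\geq0$, hence integrable near infinity once $|\gamma|>|\alpha|+n-1$. The low-frequency part, cut off near $\xi'=0'$, is handled directly: it is smooth in $(x',t)$ up to $t=0$. This yields uniform bounds and continuity of all derivatives on $\{|x'|\geq 1/2,\ 0\leq t\leq 1\}$. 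Combined with interior smoothness and homogeneity of degree $1-n-|\alpha|$ for $\partial^\alpha K^L$, this gives ${\mathscr{C}}^\infty\big(\overline{{\mathbb{R}}^n_{+}}\setminus B(0,\varepsilon)\big)$ regularity and the estimate \eqref{eq:KjG}.
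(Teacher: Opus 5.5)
Your construction is sound, but it takes a different route from the paper. The paper gives no argument of its own for Theorem~\ref{ya-T4-fav}; it simply imports the result from Agmon--Douglis--Nirenberg \cite{ADNII}. Their explicit construction yields the representation $K^L=\Delta_{x'}^{(n+q)/2}K_q$, with $K_q\in\mathscr{C}^{q-1}(\overline{\mathbb{R}^n_{+}})$ of controlled growth, and the paper later relies on this in the proof of Lemma~\ref{UYfa-uVdssk}.

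You instead take a partial Fourier transform in $x'$ and solve the Dirichlet problem for the ODE system in $t$. You check the complementing condition by a one-dimensional energy/Plancherel argument. You then read off regularity up to $t=0$ away from the origin, and \eqref{eq:KjG}, from the symbol bounds for $v(\xi',t)=\Phi(\xi'/|\xi'|,t|\xi'|)$ together with homogeneity. By the uniqueness in Corollary~\ref{y6tFFF.cc}, the two kernels coincide. Your version is self-contained and shows exactly where \eqref{L-ell.X} enters. The citation buys brevity, generality (higher-order systems, general boundary operators), and the $K_q$ structure needed later.

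Three points need fixing.

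\textbf{Where strong ellipticity is used.} The energy argument uses \eqref{L-ell.X} in every dimension; it is exactly what fails for the weakly elliptic $\Delta-2\nabla{\rm div}$. So root counting is not the only place where $n=2$ needs strong ellipticity.

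\textbf{Condition (a).} Your derivation here is imprecise. We have $v(\xi',1)=I_{M\times M}+|\xi'|\,(\partial_s\Phi)(\xi'/|\xi'|,0)+O(|\xi'|^2)$. So you must peel off a general degree-one homogeneous term, not just $|\xi'|$. Its inverse Fourier transform is $O(|x'|^{-n})$ by the theory of homogeneous distributions; naive integration by parts loses a logarithm. Only the remainder, whose $\partial^\beta_{\xi'}$-derivatives are $O(|\xi'|^{2-|\beta|})$, survives $n$ integrations by parts.

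It is simpler to do Step~3 first. Since $K^L(\cdot,t)\to\delta_{0'}I_{M\times M}$ in $\mathcal{S}'$ as $t\to0^{+}$, and $K^L(\cdot,t)\to K^L(\cdot,0)$ locally uniformly on $\{x'\neq0'\}$ by Step~3, you get $K^L(x',0)=0$ for $x'\neq0'$. Then \eqref{eq:KjG} gives $|P^L(x')|=|K^L(x',1)-K^L(x',0)|\leq\sup_{0<s<1}|(\partial_tK^L)(x',s)|\leq C|x'|^{-n}$. Together with the boundedness of $P^L$, this yields (a), and then (b).

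\textbf{Weights in Step~3.} Multiply by $|x'|^{2m}$, i.e.\ apply $\Delta_{\xi'}^m$, rather than by a single monomial $x'^\gamma$, which vanishes on coordinate hyperplanes.
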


We shall henceforth refer to the function $K^L$ defined in \eqref{eq:KDEF} as
the Agmon-Douglis-Nirenberg kernel associated with $L$.

To continue, we make the convention that the convolution between two functions 
which are matrix-valued and vector-valued, respectively, takes into account the 
algebraic multiplication between a matrix and a vector in a natural fashion.
The next result we recall has been proved in \cite[Theorem~3.1, p.\,934]{MMMM16}.

\begin{proposition}\label{thm:existence}
Let $L$ be an $M\times M$ system with constant complex coefficients as in \eqref{L-def}-\eqref{L-ell.X}, 
and recall the Poisson kernel $P^L$ for $L$ in $\mathbb{R}^{n}_{+}$ from Theorem~\ref{ya-T4-fav}.
Also, fix some arbitrary aperture parameter $\kappa>0$. 
Given a function 
\begin{equation}\label{exist:f}
f\in\Big[L^1\Big(\mathbb{R}^{n-1}\,,\,\frac{dx'}{1+|x'|^n}\Big)\Big]^M,
\end{equation}
set
\begin{equation}\label{exist:u}
u(x',t):=(P^L_t\ast f)(x'),\qquad\forall\,(x',t)\in{\mathbb{R}}^n_{+}.
\end{equation}

Then $u$ is meaningfully defined via an absolutely convergent integral,
\begin{equation}\label{exist:u2}
u\in\big[\mathscr{C}^\infty(\mathbb{R}^n_{+})\big]^{M},\quad
Lu=0\,\,\text{ in }\,\,\mathbb{R}^{n}_{+},\quad
u\big|_{\partial\mathbb{R}^{n}_{+}}^{{}^{\kappa-{\rm n.t.}}}=f
\,\,\text{ at ${\mathscr{L}}^{n-1}$-a.e. point in }\,\,\mathbb{R}^{n-1}
\end{equation}
{\rm (}with the last identity valid in the set of Lebesgue points of $f${\rm )}, 
and there exists a constant $C=C(n,L,\kappa)\in(0,\infty)$ with the property that
\begin{equation}\label{exist:Nu-Mf}
\big(\mathcal{N}_{\kappa}u\big)(x')\leq C\big(\mathcal{M}f\big)(x'),\qquad\forall\,x'\in\mathbb{R}^{n-1}.
\end{equation}
\end{proposition}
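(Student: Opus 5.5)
The statement is Proposition~\ref{thm:existence}: $u:=P^L_t\ast f$ is a well-defined null-solution of $L$, satisfies $\mathcal{N}_\kappa u\le C\mathcal{M}f$, and has nontangential trace $f$ at every Lebesgue point. I would argue directly from Theorem~\ref{ya-T4-fav}, in three steps.

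\emph{Step 1: absolute convergence, smoothness, and $Lu=0$.} From Definition~\ref{defi:Poisson}(a) one has
\[
|P^L_t(x'-y')|\le C\,t\,(t^2+|x'-y'|^2)^{-n/2}.
\]
On any compact set of $(x',t)\in\mathbb{R}^n_+$ this is bounded by $C_K(1+|y'|)^{-n}$. Hence the integral defining $u$ converges absolutely, locally uniformly, for $f\in L^1(dx'/(1+|x'|^n))$.

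The same holds for every derivative. Indeed, \eqref{eq:KjG} gives
\[
|\partial^\alpha_{x',t}K^L(x'-y',t)|\le C_\alpha\,|(x'-y',t)|^{1-n-|\alpha|},
\]
which is at most $C(1+|y'|)^{-n}$ locally uniformly when $|\alpha|\ge1$. So I may differentiate under the integral sign. This gives $u\in\mathscr{C}^\infty$ and $Lu=\int (LK^L)(x'-y',t)f(y')\,dy'=0$, using \eqref{uahgab-UBVCX} pointwise (Remark~\ref{Ryf-uyf}(ii)).

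\emph{Step 2: the maximal estimate \eqref{exist:Nu-Mf}.} Fix $(y',t)\in\Gamma_\kappa(x')$. Then $|x'-y'|<\kappa t$, so $t+|y'-z'|\ge c_\kappa(t+|x'-z'|)$. It follows that
\[
|u(y',t)|\le C\int t\,(t+|x'-z'|)^{-n}|f(z')|\,dz'.
\]
The kernel $t(t+|\cdot|)^{-n}$ is radially decreasing with $L^1(\mathbb{R}^{n-1})$ norm independent of $t$. The standard layer-cake decomposition over the annuli $|x'-z'|\sim 2^jt$ then bounds the right-hand side by $C\mathcal{M}f(x')$.

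The Hardy--Littlewood maximal function might be infinite at some points, but the inequality is trivially true there. This is the only place where the hypothesis on $f$ matters beyond convergence.

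\emph{Step 3: the nontangential trace.} Fix a Lebesgue point $x'$ of $f$. By Definition~\ref{defi:Poisson}(b),
\[
u(y',t)-f(x')=\int P^L_t(y'-z')\big(f(z')-f(x')\big)\,dz'.
\]
Using the same domination as in Step 2 for $(y',t)\in\Gamma_\kappa(x')$, I split the integral into $|z'-x'|<\delta$ and its complement.

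\begin{itemize}
\item On the near part, the annular decomposition bounds the contribution by $C\sup_{r<\delta}\aver{B_{n-1}(x',r)}|f-f(x')|$. This is small for small $\delta$, because $x'$ is a Lebesgue point.
\item On the far part, the kernel is at most $Ct\,(1+|z'|)^{-n}$ with a constant depending on $\delta$ and $x'$. Since $f\in L^1(dx'/(1+|x'|^n))$, the contribution is $O(t)\to0$ as $t\to0$.
\end{itemize}

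Letting first $t\to0$ and then $\delta\to0$ gives the trace identity at every Lebesgue point, hence $\mathscr{L}^{n-1}$-a.e.

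The main obstacle is Step 3, specifically obtaining the nontangential (rather than vertical) approach uniformly over the cone. The cone geometry handles this: the comparison $t+|y'-z'|\approx t+|x'-z'|$ lets everything be controlled by a radially decreasing majorant centered at $x'$.
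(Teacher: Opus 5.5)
Your proof is correct. The paper gives no argument of its own here; it simply recalls the result from \cite[Theorem~3.1, p.\,934]{MMMM16}. What you have written is the standard approximate-identity proof for such statements, and each step checks out:
\begin{itemize}
\item the size bound $|P^L_t(w')|\leq Ct\,(t^2+|w'|^2)^{-n/2}$;
\item the cone comparison $t+|x'-z'|\leq(1+\kappa)(t+|y'-z'|)$, which reduces everything to the radially decreasing majorant $t(t+|x'-\cdot|)^{-n}$, whose $L^1$ norm is independent of $t$;
\item the near/far splitting at a Lebesgue point, with the far part $O(t)$ because $f\in L^1(dx'/(1+|x'|^n))$.
\end{itemize}
Your self-contained version also makes visible something the citation hides. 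The maximal estimate and the trace identity use only properties (a)--(b) of Definition~\ref{defi:Poisson}. The bound \eqref{eq:KjG} and property (c) enter only through differentiation under the integral sign and the identity $Lu=0$.
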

 
We next record the following result, detailing a construction of a fundamental solution 
for second-order, homogeneous, constant (complex) coefficient, weakly elliptic systems, which is a special 
case of \cite[Theorem~11.1, pp.\,393--396]{DMit18}. Throughout, the summation convention over repeated indices is in effect.
Also, ${\mathcal{H}}^{n-1}$ denotes the $(n-1)$-dimensional Hausdorff measure in ${\mathbb{R}}^n$. 

\begin{theorem}\label{FS-prop}
Fix $n,M\in{\mathbb{N}}$, with $n\geq 2$, and let 
\begin{equation}\label{L-deEfab.utf}
L=\big(a_{rs}^{\alpha\beta}\partial_r\partial_s\big)_{1\leq\alpha,\beta\leq M}
\end{equation}
be a homogeneous $M\times M$ second-order system in ${\mathbb{R}}^n$,
with complex constant coefficients, which is weakly elliptic in the sense that 
its $M\times M$ characteristic matrix
\begin{equation}\label{Def-ELLa}
L(\xi):=\big(a^{\alpha\beta}_{rs}\xi_r\xi_s\big)_{1\leq\alpha,\beta\leq M},
\qquad\forall\,\xi=(\xi_r)_{1\leq r\leq n}\in{\mathbb{R}}^n,
\end{equation}
satisfies
\begin{equation}\label{Def-ELLa-WE}
{\rm det}\big[L(\xi)\big]\not=0,\qquad\forall\,\xi\in{\mathbb{R}}^n\setminus\{0\}.
\end{equation}

Consider the $M\times M$ matrix-valued function $E^L=\big(E^L_{\alpha\beta}\big)_{1\leq\alpha,\beta\leq M}$ defined 
at each $x\in{\mathbb{R}}^{n}\setminus\{0\}$ by\footnote{for each $m\in{\mathbb{N}}$, we let $\Delta_x^m$ denote 
the $m$-fold application of the Laplacian in the variable $x$}
\begin{equation}\label{Def-ES1-GLOB}
E^L(x):=\left\{
\begin{array}{ll}
\displaystyle
\frac{\Delta_x^{(n-1)/2}}{4(2\pi\,i)^{n-1}}\Bigg\{\,
\int\limits_{S^{n-1}}\big|\langle x,\xi\rangle\big|
\big[L(\xi)\big]^{-1}d{\mathcal{H}}^{\,n-1}(\xi)\Bigg\}
&\text{ if $n$ is odd},
\\[30pt]
\displaystyle
-\frac{\Delta_x^{(n-2)/2}}{(2\pi\,i)^{n}}\Bigg\{\,\int\limits_{S^{n-1}}
\ln\big|\langle x,\xi\rangle\big|\,\big[L(\xi)\big]^{-1}d{\mathcal{H}}^{\,n-1}(\xi)\Bigg\}
&\text{ if $n$ is even}.
\end{array}
\right.
\end{equation}

In relation to the ${\mathbb{C}}^{M\times M}$-valued function \eqref{Def-ES1-GLOB}, 
the following properties hold:

\begin{enumerate}
\item[(1)] For each $\alpha,\beta\in\{1,\dots,M\}$ one has
\begin{equation}\label{smmth-odd}
\begin{array}{c}
E^L_{\alpha\beta}\in{\mathscr{C}}^{\,\infty}(\mathbb{R}^n\setminus\{0\})\cap L^1_{\rm loc}({\mathbb{R}}^n),
\\[8pt]
E^L_{\alpha\beta}(-x)=E_{\alpha\beta}(x)\,\,\text{ for all }\,\,x\in{\mathbb{R}}^n\setminus\{0\}.
\end{array}
\end{equation}
In fact, each entry $E^L_{\alpha\beta}$ is a real-analytic function in ${\mathbb{R}}^n\setminus\{0\}$.
Moreover, each $E^L_{\alpha\beta}$ is an even tempered distribution in $\mathbb{R}^n$
{\rm (}induced via integration against Schwartz functions{\rm )}. In addition, 
each $E^L_{\alpha\beta}$ is positive homogeneous of degree $2-n$ if $n\geq 3$.

\vskip 0.08in
\item[(2)] If for each $y\in{\mathbb{R}}^n$ one denotes by $\delta_y$ Dirac's delta distribution
with mass at $y$ in ${\mathbb{R}}^n$, then in the sense of tempered distributions in ${\mathbb{R}}^n$ one has
\begin{equation}\label{fs-GLOB}
L_x\big[E^L(x-y)\big]=\delta_y(x)\,I_{M\times M},\qquad\forall\,y\in{\mathbb{R}}^n,
\end{equation}
where $I_{M\times M}$ is the $M\times M$ identity matrix, and the subscript $x$ indicates 
that the operator $L$ in \eqref{fs-GLOB} is applied to each column of the matrix $E^L(x-y)$ in the variable $x$.

\vskip 0.08in
\item[(3)] For each multi-index $\gamma\in\mathbb{N}_0^n$ with $|\gamma|>0$, the tempered distribution 
$\partial^\gamma E^L$ is positive homogeneous of degree $2-n-|\gamma|$ in ${\mathbb{R}}^n$.
This is also true for $|\gamma|=0$ provided $n\geq 3$, i.e., the tempered distribution
$E^L$ is positive homogeneous of degree $2-n$ in ${\mathbb{R}}^n$ if $n\geq 3$.
Finally, corresponding to $n=2$ and $|\gamma|=0$, one may express
\begin{equation}\label{fs-str}
E^L(x)=\Phi(x)+\frac{\ln|x|}{4\pi^2}\int_{S^{1}}\big[L(\xi)\big]^{-1}\,d{\mathcal{H}}^1(\xi),
\qquad\forall\,x\in\mathbb{R}^2\setminus\{0\},
\end{equation}
where $\Phi:\mathbb{R}^2\setminus\{0\}\to{\mathbb{C}}^{M\times M}$, given by 
\begin{equation}\label{Def-ES2-Eeda}
\Phi(x):=\frac{1}{4\pi^2}\int\limits_{S^{1}}\ln\Big|\Big\langle\frac{x}{|x|},\xi\Big\rangle\Big|
\,\big[L(\xi)\big]^{-1}\,d{\mathcal{H}}^{1}(\xi),\qquad
\forall\,x\in{\mathbb{R}}^2\setminus\{0\},
\end{equation}
is a function of class ${\mathscr{C}}^{\,\infty}$ and positive homogeneous of degree $0$
in ${\mathbb{R}}^2\setminus\{0\}$.

\vskip 0.03in
\item[(4)] For each $\gamma\in\mathbb{N}_0^n$ there exists a finite constant
$C_\gamma>0$ such that for each $x\in{\mathbb{R}}^n\setminus\{0\}$
\begin{equation}\label{fs-est}
\big|(\partial^\gamma E^L)(x)\big|\leq
\left\{
\begin{array}{l}
\displaystyle\frac{C_\gamma}{|x|^{n+|\gamma|-2}}
\,\,\text{ if either $n\geq 3$, or $n=2$ and $|\gamma|>0$},
\\[16pt]
C_0\big(1+\big|\ln|x|\big|\big)\quad\text{ whenever }\,\,n=2
\,\,\text{ and }\,\,|\gamma|=0.
\end{array}
\right.
\end{equation}

\item[(5)] The fundamental solution $E^L$ defined in \eqref{Def-ES1-GLOB} satisfies 
\begin{equation}\label{E-Trans}
\begin{array}{c}
\big(E^L\big)^\top=E^{L^\top},\quad
\overline{\big(E^L\big)}=E^{\overline{L}},\quad
\big(E^L\big)^\ast=E^{L^\ast},
\\[8pt]
\text{as well as }\,E^{\lambda L}=\lambda^{-1}E^L
\,\,\text{ for each }\,\lambda\in{\mathbb{C}}\setminus\{0\}.
\end{array}
\end{equation}
where $\top$, $\overline{\cdot}$, $\ast$ denote, respectively, transposition, complex conjugation, 
and complex {\rm (}or Hermitian{\rm )} adjunction. 

\vskip 0.03in
\item[(6)] Let `hat' denote the Fourier transform in ${\mathbb{R}}^n$ {\rm (}originally defined 
on Schwartz functions, then extended to tempered distributions via duality{\rm )}.
Then $\widehat{E}$ is a tempered distribution in ${\mathbb{R}}^n$ {\rm (}which is positive homogeneous 
of degree $-2$ if $n\geq 3${\rm )}, whose restriction to ${\mathbb{R}}^n\setminus\{0\}$ 
is a {\rm (}matrix-valued{\rm )} function of class ${\mathscr{C}}^{\infty}$. In fact, 
\begin{equation}\label{E-ftXC}
\widehat{E}(\xi)=\big[L(\xi)\big]^{-1}\,\,\text{ for every }\,\,\xi\in{\mathbb{R}}^n\setminus\{0\}.
\end{equation}
\end{enumerate}
\end{theorem}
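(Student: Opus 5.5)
The plan is to work directly from the plane-wave formula \eqref{Def-ES1-GLOB}. Set $F(\xi):=[L(\xi)]^{-1}$ for $\xi\in S^{n-1}$; by \eqref{Def-ELLa-WE} this is a real-analytic, even function on the sphere. Consider the auxiliary functions
\begin{equation*}
u(x):=\int_{S^{n-1}}|\langle x,\xi\rangle|\,F(\xi)\,d{\mathcal{H}}^{n-1}(\xi)\quad(n\text{ odd}),
\end{equation*}
and $v(x):=\int_{S^{n-1}}\ln|\langle x,\xi\rangle|\,F(\xi)\,d{\mathcal{H}}^{n-1}(\xi)$ for $n$ even.

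\emph{Regularity, homogeneity and evenness (items (1), (3), (4)).} The function $u$ is continuous and positive homogeneous of degree $1$. The function $v$ is locally integrable, and $v(x)=\ln|x|\int_{S^{n-1}}F\,d{\mathcal{H}}^{n-1}+v(x/|x|)$. To obtain real-analyticity off the origin, compute $\partial_x^2$ of $|\langle x,\xi\rangle|$ (resp.\ of $\ln|\langle x,\xi\rangle|$) in the distributional sense. This turns the derivatives into integrals of $F$ and its derivatives over the great spheres $\{\xi\in S^{n-1}:\langle x,\xi\rangle=0\}$ (a Funk-type transform). Since $F$ is real-analytic, this transform depends analytically on $x\neq0$; equivalently, one may rotate coordinates so that $x/|x|$ becomes a pole and differentiate under the integral sign.

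Applying $\Delta^{(n-1)/2}$ (resp.\ $\Delta^{(n-2)/2}$) then gives the following.
\begin{itemize}
\item For $n\geq3$, $E^L$ is homogeneous of degree $2-n$, hence locally integrable and tempered. For $n$ even and $\geq4$, this uses that $\Delta^{(n-2)/2}\ln|x|$ is homogeneous of degree $2-n$.
\item For $n=2$, no Laplacian is applied, and the decomposition \eqref{fs-str}--\eqref{Def-ES2-Eeda} is exactly the splitting of $v$ above.
\item Evenness is immediate from $|\langle -x,\xi\rangle|=|\langle x,\xi\rangle|$.
\item The bounds \eqref{fs-est} follow from homogeneity together with smoothness on $S^{n-1}$. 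In the case $n=2$, $|\gamma|=0$, they follow from the logarithmic term.
\end{itemize}

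\emph{The fundamental-solution identity (item (2)).} By translation it suffices to take $y=0$. The matrices commute past the scalar Laplacians, and for fixed $\xi$ we have
\begin{equation*}
L_x\big[|\langle x,\xi\rangle|\,F(\xi)\big]=L(\xi)F(\xi)\,\big(|t|\big)''\big|_{t=\langle x,\xi\rangle}=2\,\delta(\langle x,\xi\rangle)\,I_{M\times M}.
\end{equation*}
For $n$ even, the analogous identity reads $L_x\big[\ln|\langle x,\xi\rangle|\,F(\xi)\big]=-\,{\rm p.v.}\,\langle x,\xi\rangle^{-2}\,I_{M\times M}$. Hence $L E^L$ equals a scalar multiple of $\Delta^{(n-1)/2}\int_{S^{n-1}}\delta(\langle x,\xi\rangle)\,d{\mathcal{H}}^{n-1}(\xi)$ (resp.\ of the corresponding expression for $n$ even), and this is exactly the classical plane-wave decomposition of Dirac's delta (Gelfand--Shilov/John). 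The constants $\frac{1}{4(2\pi i)^{n-1}}$ and $-\frac{1}{(2\pi i)^n}$ are those making that decomposition equal to $\delta$. The justification is carried out by pairing with Schwartz test functions and using Fubini in $\xi$.

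\emph{The Fourier transform (item (6)).} Alternatively, and as a check on (2), compute the Fourier transform of $E^L$ through the one-dimensional transforms of $|t|$ and $\ln|t|$ along each direction $\xi$. This yields $\widehat{E}(\xi)=[L(\xi)]^{-1}$ on ${\mathbb{R}}^n\setminus\{0\}$, which is homogeneous of degree $-2$. For $n\geq3$ this function is locally integrable and is the only homogeneous extension of that degree, so $L(\xi)\widehat{E}=I$ and hence $LE^L=\delta I$.

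\emph{The algebraic identities (item (5)).} These are direct consequences of the formula:
\begin{itemize}
\item $L^\top(\xi)=L(\xi)^\top$ gives $(E^L)^\top=E^{L^\top}$;
\item $\overline{L}(\xi)=\overline{L(\xi)}$ together with $i^{n-1}\in{\mathbb{R}}$ ($n$ odd) and $i^{n}\in{\mathbb{R}}$ ($n$ even) gives $\overline{(E^L)}=E^{\overline{L}}$, and the adjoint identity follows by combining the two;
\item $[\lambda L(\xi)]^{-1}=\lambda^{-1}[L(\xi)]^{-1}$ gives $E^{\lambda L}=\lambda^{-1}E^L$.
\end{itemize}

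The main obstacle is the rigorous distributional handling near the origin. Specifically, one must show that differentiating $u$ or $v$ in the sense of distributions on all of ${\mathbb{R}}^n$ introduces no extra terms supported at $0$ beyond $\delta$, and one must pin down the constant in the plane-wave formula for $\delta$. I would first verify this on the Fourier side via homogeneity and uniqueness of homogeneous extensions, treating the case $n=2$ by hand.
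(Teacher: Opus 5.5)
The paper gives no proof of this theorem; it quotes it from \cite{DMit18} (Theorem~11.1 there). Judged on its own, your treatment of items (1)--(5) is sound and follows the standard plane-wave route.

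Your key identity $L_x\big[g(\langle x,\xi\rangle)[L(\xi)]^{-1}\big]=g''(\langle x,\xi\rangle)I_{M\times M}$ shows that $LE^L$ does not depend on $L$. Hence $LE^L=(\Delta E^{\Delta})I_{M\times M}$, where $E^\Delta$ is the same formula with $[L(\xi)]^{-1}$ replaced by $1$. This lets you replace the appeal to Gelfand--Shilov/John by a direct check of one constant per dimension for the radial Laplacian kernel. For example, $n=3$ gives $-\frac{1}{4\pi|x|}$ and $n=2$ gives $\frac{1}{2\pi}\ln|x|-\frac{\ln 2}{2\pi}$.

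What you call the main obstacle takes one line. The distributional $\Delta^{(n-1)/2}u$ and the function $E^L$ (up to the constant) are both homogeneous of degree $2-n$ and agree off the origin. Their difference is therefore a finite sum $\sum c_\alpha\partial^\alpha\delta$ whose terms are homogeneous of degree $-n-|\alpha|\neq 2-n$, so it vanishes. For even $n\geq 4$, split off the $\ln|x|$ term, whose derivatives of positive order are homogeneous; for $n=2$ no derivative is taken. One small correction: $(\ln|t|)''=-\,{\rm f.p.}\,t^{-2}$, the derivative of ${\rm p.v.}\,1/t$; $t^{-2}$ has no principal value.

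The genuine problem is item (6). Take any normalization with $\widehat{\partial_j f}=\pm i b\,\xi_j\widehat f$ ($b>0$) and $\widehat{\delta}=a>0$. Then $\widehat{LE^L}=-b^2L(\xi)\widehat{E^L}$, so (2) forces $\widehat{E^L}=-\frac{a}{b^2}[L(\xi)]^{-1}$ on $\mathbb{R}^n\setminus\{0\}$. For $\widehat f(\xi)=\int f(x)e^{-i\langle x,\xi\rangle}\,dx$ this is $-[L(\xi)]^{-1}$.

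Concretely, for $L=\Delta$ and $n=3$ the formula gives $E^\Delta=-\frac{1}{4\pi|x|}$, whose transform is $-|\xi|^{-2}$. Likewise, in your one-dimensional computation $\widehat{|t|}=-2\,{\rm f.p.}\,\tau^{-2}$. Your step ``$L(\xi)\widehat E=I$, hence $LE^L=\delta I$'' drops the factor $i^2=-1$. The claimed outcome of the directional Fourier computation is also not what that computation produces.

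So \eqref{E-ftXC}, as displayed, contradicts \eqref{fs-GLOB} and cannot be proved. What is true follows in one line from (2): multiply $-L(\xi)\widehat{E^L}=I_{M\times M}$ in ${\mathcal{D}}'(\mathbb{R}^n\setminus\{0\})$ by the smooth matrix $[L(\xi)]^{-1}$. This gives $\widehat{E^L}=-[L(\xi)]^{-1}$ off the origin, homogeneous of degree $-2$ when $n\geq 3$.

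You should flag the sign rather than reproduce it. It is harmless for the paper: its only use of (6), in Remark~\ref{tafh.R2}, needs just that $E^L$ determines $L(\xi)$.
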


We augment the above theorem with a couple of remarks. 

\begin{remark}\label{tafh.R1}
As it may be easily seen from definitions, two systems are equal {\rm (}as linear mappings acting on the 
space of distributions in ${\mathbb{R}}^n${\rm )} if and only if their characteristic matrices coincide 
{\rm (}as matrix-valued functions defined in ${\mathbb{R}}^n${\rm )}, i.e., with the piece of notation 
introduced in \eqref{Def-ELLa}, 
\begin{equation}\label{eq:edxuye.FDa}
L_1=L_2\,\Longleftrightarrow\,L_1(\xi)=L_2(\xi)\,\,\text{ for each }\,\,\xi\in{\mathbb{R}}^n.
\end{equation}
Alternatively, 
\begin{equation}\label{L-def.vqusvq.I}
\Big(a^{\alpha\beta}_{rs}\partial_r\partial_s\Big)_{1\leq\alpha,\beta\leq M}
=\Big(\widetilde{a}^{\alpha\beta}_{rs}\partial_r\partial_s\Big)_{1\leq\alpha,\beta\leq M}
\end{equation}
(as mappings on vector distributions) if and only if 
\begin{equation}\label{L-def.vqusvq.II}
a^{\alpha\beta}_{rs}+a^{\alpha\beta}_{sr}
=\widetilde{a}^{\alpha\beta}_{rs}+\widetilde{a}^{\alpha\beta}_{sr}
\,\,\text{ for all }\,\,\alpha,\beta\in\{1,\dots,M\},\,\,r,s\in\{1,\dots,n\}.
\end{equation}
\end{remark}

\begin{remark}\label{tafh.R2}
With notation introduced in \eqref{eq:kfgdc.WACO.D+M.new} and \eqref{Def-ES1-GLOB}, one has
\begin{equation}\label{eq:edxuye.FDa.2}
E^{L\circ R}=E^L\circ R\,\,\text{ for any unitary transformation $R$ in ${\mathbb{R}}^n$}.
\end{equation}
Indeed, this is seen from definitions, the invariance of integral over the unit sphere under unitary transformations 
(cf., e.g., \cite{HoMiTa07}, \cite{GHA.II}), and the invariance of the Laplacian under unitary transformations 
(cf., e.g., \cite[Exercise~7.77, (7.14.3), p.\,319]{DMit18}). Consequently, from \eqref{eq:edxuye.FDa.2}, 
\eqref{eq:edxuye.FDa}, and \eqref{E-ftXC} we see that 
\begin{equation}\label{eq:edxuye.FDa.3}
\parbox{10.00cm}{given a unitary transformation $R$ in ${\mathbb{R}}^n$, it follows that $E^L$ is invariant under $R$ 
if and only if $L$ is invariant under $R$.}
\end{equation}
In particular, 
\begin{equation}\label{eq:edxuye.FDa.4}
\parbox{9.20cm}{$E^L$ is rotation (respectively, reflection) invariant  
if and only if $L$ is rotation (respectively, reflection) invariant.}
\end{equation}
\end{remark}

Moving on, we shall now record the following versatile version of interior estimates for
second-order elliptic systems. A proof may be found in \cite[Theorem~11.12, p.\,415]{DMit18}.

\begin{theorem}\label{ker-sbav}
Consider a homogeneous, constant coefficient, second-order, system $L$ satisfying the weak 
ellipticity condition ${\rm det}\,[L(\xi)]\not=0$ for each $\xi\in{\mathbb{R}}^n\setminus\{0\}$.
Then for each null-solution $u$ of $L$ in a ball $B(x,R)$ {\rm (}where $x\in{\mathbb{R}}^n$ and $R>0${\rm )}, 
$p\in(0,\infty)$, $\lambda\in(0,1)$, $\ell\in{\mathbb{N}}_0$, and $r\in(0,R)$, one has
\begin{equation}\label{detraz}
\sup_{z\in B(x,\lambda r)}|(\nabla^\ell u)(z)|
\leq\frac{C}{r^\ell}\left(\aver{B(x,r)}|u|^p\,d{\mathscr{L}}^n\right)^{1/p},
\end{equation}
where $C=C(L,p,\ell,\lambda,n)>0$ is a finite constant.
\end{theorem}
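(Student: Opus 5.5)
By translation and dilation invariance of the class of null-solutions of $L$ (the coefficients are constant and $L$ is homogeneous), it suffices to treat $x=0$ and $r=1$. If $u$ solves $Lu=0$ in $B(x,R)$, then $v(w):=u(x+rw)$ solves $Lv=0$ in $B(0,R/r)\supset\overline{B(0,1)}$, and $\nabla^\ell v=r^\ell(\nabla^\ell u)(x+r\cdot)$. This rescaling is exactly what produces the factor $r^{-\ell}$ in \eqref{detraz}. We also use that $u\in{\mathscr{C}}^\infty$ in the ball: since $L$ is weakly elliptic it is hypoelliptic, which we take as known (cf. \cite[Theorem~10.9]{DMit18}). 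The plan is to prove the case $p=1$ by a representation formula built from the fundamental solution $E^L$ of Theorem~\ref{FS-prop}. The case $p>1$ then follows at once from Hölder's inequality, and the case $p\in(0,1)$ follows by a standard absorption argument.

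For $p=1$, fix $\lambda\in(0,1)$ and set $\mu:=(1+\lambda)/2$. Pick $\psi\in{\mathscr{C}}^\infty_c(B(0,1))$ with $\psi\equiv1$ on $B(0,\mu)$. Then $\psi u\in{\mathscr{C}}^\infty_c({\mathbb{R}}^n)$, so \eqref{fs-GLOB} gives
\[
\psi u=E^L\ast L(\psi u).
\]
Because $Lu=0$, the function $L(\psi u)$ consists only of terms of the form $(\partial\psi)(\partial u)$ and $(\partial^2\psi)u$. All of these are supported in the annulus $A:=\{\mu\le|w|\le1\}$.

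Next take $z\in B(0,\lambda)$ and apply $\partial^\gamma_z$ with $|\gamma|=\ell$, letting the derivatives fall on $E^L$. Then integrate by parts in $w$ to move the single derivative off $u$ in the $(\partial\psi)(\partial u)$ terms. The result is
\[
(\partial^\gamma u)(z)=\sum\int_A(\partial^{\gamma+\eta}E^L)(z-w)\,(\partial^{\theta}\psi)(w)\,u(w)\,dw ,
\]
a finite sum with $|\eta|\le1$ and $1\le|\theta|\le2$. Since $|z-w|\ge\mu-\lambda=(1-\lambda)/2$ whenever $z\in B(0,\lambda)$ and $w\in A$, the estimate \eqref{fs-est} bounds each kernel by a constant $C(L,\ell,\lambda,n)$. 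This yields $\sup_{B(0,\lambda)}|\nabla^\ell u|\le C\int_{B(0,1)}|u|$, which is the case $p=1$. For $p\in(1,\infty)$ we simply apply Hölder's inequality to the right-hand side.

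The main obstacle is the case $p\in(0,1)$, where no duality-type argument is available and we must instead use an absorption argument. Apply the $\ell=0$, $p=1$ estimate, rescaled, on balls $B(w,t-s)$ with $|w|<s<t\le1$. This gives
\[
m(s):=\sup_{B(0,s)}|u|\le C(t-s)^{-n}\int_{B(0,t)}|u|\le C(t-s)^{-n}\,m(t)^{1-p}\int_{B(0,1)}|u|^p .
\]
By Young's inequality,
\[
m(s)\le\tfrac12\,m(t)+C'(t-s)^{-n/p}\Big(\int_{B(0,1)}|u|^p\Big)^{1/p}\quad\text{for all }\mu\le s<t<1 .
\]
Here $m(t)<\infty$ for $t<1$ because $u$ is smooth on $\overline{B(0,t)}$; this is where the finiteness of $m(t)$ is needed for the absorption. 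The standard iteration lemma for bounded nondecreasing functions then yields $m(\mu)\le C\|u\|_{L^p(B(0,1))}$. Finally, apply the already established $p=\infty$ (equivalently $p=1$) derivative estimate on $B(0,\mu)$, rescaled and with $\lambda/\mu$ in place of $\lambda$. This gives $\sup_{B(0,\lambda)}|\nabla^\ell u|\le C\,m(\mu)$, and combining it with the previous bound concludes \eqref{detraz} with $C=C(L,p,\ell,\lambda,n)$.
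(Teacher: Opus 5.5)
The paper gives no proof of this statement; it imports it from \cite[Theorem~11.12, p.\,415]{DMit18}, so there is no in-paper argument to compare your route against. Your proof is correct and self-contained, and each of its three parts checks out:
\begin{itemize}
\item for $p=1$, the cutoff plus fundamental-solution representation, with the single derivative moved off $u$ so that only $u$ appears on the annulus where $\nabla\psi\neq0$;
\item for $p>1$, H\"older's inequality;
\item for $p\in(0,1)$, the absorption step $m(s)\le\tfrac12 m(t)+C(t-s)^{-n/p}\|u\|_{L^p(B(0,1))}$ combined with the standard iteration lemma.
\end{itemize}
The constant dependence $C=C(L,p,\ell,\lambda,n)$ also comes out as claimed.

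One point of justification needs a fix. \eqref{fs-GLOB} says that $L$ applied to the \emph{columns} of $E^L$ gives $\delta I$, and this yields $L(E^L\ast g)=g$. The identity you use, $\psi u=E^L\ast L(\psi u)$, instead requires $L$ acting on the \emph{rows} of $E^L$. This follows by combining $(E^L)^\top=E^{L^\top}$ from item (5) of Theorem~\ref{FS-prop} with \eqref{fs-GLOB} written for $L^\top$, which is still weakly elliptic.
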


Given $u:{\mathbb{R}}^n_{+}\to{\mathbb{C}}$ which is absolutely integrable over 
bounded Lebesgue measurable subsets of ${\mathbb{R}}^n_{+}$, define (whenever meaningful) the Sobolev 
trace
\begin{equation}\label{Veri-S2TG.3}
\big({\rm Tr}\,u\big)(x'):=\lim\limits_{r\to 0^{+}}\meanint_{B((x',0),r)\cap{\mathbb{R}}^n_{+}}
u\,d{\mathscr{L}}^n,\qquad x'\in{\mathbb{R}}^{n-1}. 
\end{equation}
Then for each $u\in W^{1,p}({\mathbb{R}}^n_{+})$, $1<p<\infty$, 
the trace ${\rm Tr}\,u$ exists a.e. on $\partial{\mathbb{R}}^n_{+}$ and 
belongs to $B^{p,p}_{1-1/p}({\mathbb{R}}^{n-1})$, 
where for each $p\in(1,\infty)$ and $s\in(0,1)$ the Besov space 
$B^{p,p}_s({\mathbb{R}}^{n-1})$ is defined as the collection of all measurable 
functions $f:{\mathbb{R}}^{n-1}\to{\mathbb{C}}$ with the property that
\begin{equation}\label{Veri-S2TG.4}
\|f\|_{B^{p,p}_s({\mathbb{R}}^{n-1})}:=\|f\|_{L^p({\mathbb{R}}^{n-1})}+\Big(\int_{{\mathbb{R}}^{n-1}}
\int_{{\mathbb{R}}^{n-1}}\frac{|f(x')-f(y')|^p}{|x'-y'|^{n-1+sp}}\,dx'dy'\Big)^{1/p}<+\infty.
\end{equation}
In fact, for each $p\in(1,\infty)$ the operator 
\begin{equation}\label{Veri-S2TG.5}
{\rm Tr}:W^{1,p}({\mathbb{R}}^n_{+})
\longrightarrow B^{p,p}_{1-1/p}({\mathbb{R}}^{n-1})
\end{equation}
is well defined, linear and bounded, and has a linear and bounded right-inverse. 
The proposition recorded below is a particular case of a more general result proved in \cite[Theorem~5.6, p.\,4372]{BMMM}.

\begin{proposition}\label{HgUV954.Ki-5}
For every $u\in W^{1,p}({\mathbb{R}}^n_{+})$ with $p\in(1,\infty)$ and every $M>1$ one has
\begin{equation}\label{Veri-S2TG.6}
\big({\rm Tr}\,u\big)(x')=
\lim\limits_{r\to 0^{+}}\meanint_{B((x',\,r),\,r/M)}u\,d{\mathscr{L}}^n
\,\,\text{ at a.e. }\,\,x'\in{\mathbb{R}}^{n-1}. 
\end{equation}
\end{proposition}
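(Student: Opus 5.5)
The plan is to reduce \eqref{Veri-S2TG.6} to an $L^1$-Lebesgue point property of $u$ at almost every boundary point. The conclusion then follows from a simple geometric comparison of the ball $B((x',r),r/M)$ with a half-ball centered at $(x',0)$. Specifically, we aim to show that for ${\mathscr{L}}^{n-1}$-a.e. $x'\in{\mathbb{R}}^{n-1}$ the limit $({\rm Tr}\,u)(x')$ in \eqref{Veri-S2TG.3} exists and
\begin{equation*}
\lim_{r\to 0^{+}}\meanint_{B((x',0),r)\cap{\mathbb{R}}^n_{+}}\big|u-({\rm Tr}\,u)(x')\big|\,d{\mathscr{L}}^n=0 .
\end{equation*}
Granting this, fix $M>1$ and $r>0$. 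Since $r/M<r$, the ball $B((x',r),r/M)$ is contained in ${\mathbb{R}}^n_{+}$. It is also contained in $B((x',0),2r)\cap{\mathbb{R}}^n_{+}$, and the ratio of the Lebesgue measures of these two sets is a constant $c(n,M)>0$ independent of $r$. Hence
\begin{equation*}
\Big|\meanint_{B((x',r),r/M)}u\,d{\mathscr{L}}^n-({\rm Tr}\,u)(x')\Big|
\leq c(n,M)^{-1}\meanint_{B((x',0),2r)\cap{\mathbb{R}}^n_{+}}\big|u-({\rm Tr}\,u)(x')\big|\,d{\mathscr{L}}^n\longrightarrow 0
\end{equation*}
as $r\to 0^{+}$, which is \eqref{Veri-S2TG.6}.

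To prove the Lebesgue point property, first extend $u$ by even reflection across $\partial{\mathbb{R}}^n_{+}$, setting $(Eu)(y',y_n):=u(y',|y_n|)$. A standard argument (approximate $u$ by functions in ${\mathscr{C}}^\infty_c(\overline{{\mathbb{R}}^n_{+}})$, which are dense in $W^{1,p}({\mathbb{R}}^n_{+})$) shows $Eu\in W^{1,p}({\mathbb{R}}^n)$, with $\|Eu\|_{W^{1,p}}\leq 2\|u\|_{W^{1,p}}$. Next, invoke the classical fine property of Sobolev functions. There is a Borel set $Z\subset{\mathbb{R}}^n$ of $(1,p)$-capacity zero such that every $x\notin Z$ is an $L^1$-Lebesgue point of $Eu$, with value $\widetilde{u}(x)$. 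If $p>n$ this is immediate, since $Eu$ has a H\"older continuous representative. If $1<p\leq n$, sets of zero $(1,p)$-capacity have vanishing ${\mathcal{H}}^{s}$ measure for every $s>n-p$. Since $p>1$ we may take $s=n-1$, and conclude that $Z\cap\partial{\mathbb{R}}^n_{+}$ is ${\mathscr{L}}^{n-1}$-null. For $x'$ outside this null set, the reflection symmetry of $Eu$ and of $B((x',0),r)$ gives
\begin{equation*}
\meanint_{B((x',0),r)\cap{\mathbb{R}}^n_{+}}|u-\widetilde{u}(x',0)|\,d{\mathscr{L}}^n
=\meanint_{B((x',0),r)}|Eu-\widetilde{u}(x',0)|\,d{\mathscr{L}}^n\longrightarrow 0 .
\end{equation*}
In particular, the averages in \eqref{Veri-S2TG.3} converge to $\widetilde{u}(x',0)$. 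Hence $({\rm Tr}\,u)(x')=\widetilde{u}(x',0)$, and the displayed Lebesgue point property holds.

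The main obstacle is this quasi-everywhere Lebesgue point statement combined with the capacity/Hausdorff measure comparison, which I would quote from standard references rather than reprove. As a fallback, one can use a self-contained density argument. For $\varphi\in{\mathscr{C}}^\infty_c({\mathbb{R}}^n)$ the claim is trivial. For general $u$, control the maximal operator
\begin{equation*}
u\longmapsto\sup_{r>0}\meanint_{B((x',0),r)\cap{\mathbb{R}}^n_{+}}|u-{\rm Tr}\,u(x')|\,d{\mathscr{L}}^n .
\end{equation*}
This is done by telescoping Poincar\'e inequalities over dyadic half-balls, which dominates it by $\int_{B((x',0),R)\cap{\mathbb{R}}^n_{+}}|\nabla u(y)|\,|y-(x',0)|^{1-n}\,dy$. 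This quantity is bounded in $L^p_{\rm loc}({\mathbb{R}}^{n-1})$ for $p>1$, by the same trace-type estimates that underlie the boundedness of \eqref{Veri-S2TG.5}. The fallback then concludes in the usual way: a maximal-function bound plus density of smooth functions yields a.e. convergence.
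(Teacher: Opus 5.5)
Your proof is correct, but it takes a different route from the paper. The paper gives no argument of its own: it simply records the statement as a special case of \cite[Theorem~5.6]{BMMM}, a result proved for locally $(\varepsilon,\delta)$-domains.

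You instead give a self-contained argument specific to the half-space, in three steps:
\begin{enumerate}
\item Even reflection puts $Eu$ in $W^{1,p}({\mathbb{R}}^n)$.
\item The quasi-everywhere Lebesgue point theorem, combined with the fact that sets of zero $(1,p)$-capacity are ${\mathcal{H}}^{n-1}$-null when $p>1$ (for $p>n$, use the H\"older continuous representative instead), gives the $L^1$-Lebesgue point property of $u$ on half-balls at ${\mathscr{L}}^{n-1}$-a.e. boundary point. As a by-product, the limit in \eqref{Veri-S2TG.3} exists a.e.
\item The inclusion $B((x',r),r/M)\subset B((x',0),2r)\cap{\mathbb{R}}^n_{+}$, with measure ratio $c(n,M)$ independent of $r$, transfers this convergence to the interior balls.
\end{enumerate}
Each step checks out, including the symmetry identity equating half-ball averages of $|u-c|$ with full-ball averages of $|Eu-c|$.

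The citation buys generality: in rough domains no reflection is available, and one has to work with a general extension operator instead. Your route buys transparency and a slightly stronger conclusion. You get a single exceptional null set that serves all $M>1$ at once. In fact you get convergence of averages over any balls of radius comparable to $r$ lying in $B((x',0),Cr)\cap{\mathbb{R}}^n_{+}$.

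Your fallback is unnecessary, and its justification is imprecise. The $L^p_{\rm loc}({\mathbb{R}}^{n-1})$ bound it needs for $\int|\nabla u(y)|\,|y-(x',0)|^{1-n}\,dy$ is an Adams-type trace inequality for the Riesz potential of order one; this is where $p>1$ enters. It is not the estimate underlying the boundedness of \eqref{Veri-S2TG.5}.
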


The following result, which may be found in \cite[Corollary~2.4]{MaMiSh}, is a 
consequence of the {\it a priori} regularity estimates obtained in \cite{ADNII}
and Sobolev embeddings.

\begin{proposition}\label{c1.2}
Let $L$ be an $M\times M$ system with constant complex coefficients as in \eqref{L-def}-\eqref{L-ell.X}.
Consider a function $u:{\mathbb{R}}^n_{+}\to{\mathbb{C}}^M$ with the property that 
$u\in\big[W^{1,2}\big({\mathbb{R}}^n_{+}\cap B(0,R)\big)\big]^M$ for each $R>0$ and such that
\begin{equation}\label{eJB-iY}
\left\{
\begin{array}{ll}
Lu=0 &\text{ in }\,\,\mathbb{R}^n_{+},
\\[6pt] 
{\rm Tr}\,u=f &\text{ on }\,\,\mathbb{R}^{n-1}, 
\end{array}
\right.
\end{equation}
for some $f\in\big[{\mathscr{C}}_{\rm loc}^{k+1}(\mathbb{R}^{n-1})\big]^M$,  
where $k\in{\mathbb{N}}$. Then for any $z\in\overline{\mathbb{R}^n_{+}}$ and $\rho>0$,
\begin{equation}\label{eq1.18}
\sup_{\mathbb{R}^n_{+}\cap B(z,\rho)}|\nabla^k u|\leq C\,\Big(\,\rho^{-k}\sup_{\mathbb{R}^n_{+}\cap B(z,2\rho)}|u|
+\sum_{\ell=0}^{k+1}\rho^{\ell-k}\sup_{\partial\mathbb{R}^n_{+}\cap B(z,2\rho)}|\nabla^\ell f|\Big),
\end{equation}
where $C\in(0,\infty)$ is a  constant independent of $\rho$, $z$, $u$ and $f$.
\end{proposition}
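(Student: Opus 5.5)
The plan is to reduce \eqref{eq1.18} to a normalized local estimate at the boundary, and then combine the $L^p$-based Agmon--Douglis--Nirenberg a priori estimates for the Dirichlet problem with the Sobolev embedding $W^{1,p}\hookrightarrow{\mathscr{C}}^0$ for $p>n$.

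\emph{Scaling.} Since $L$ is homogeneous with constant coefficients, $u_\rho(x):=u(z+\rho x)$ still solves $Lu_\rho=0$ when $z\in\partial{\mathbb{R}}^n_{+}$, and ${\rm Tr}\,u_\rho=f(z'+\rho\,\cdot)$. Each term in \eqref{eq1.18} scales exactly as $\rho^{-k}$ after this change of variables, because $\nabla^\ell$ of the rescaled datum picks up the factor $\rho^\ell$. So it suffices to treat $\rho=1$, with a constant depending only on $L,n,k$.

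\emph{Reduction to $z\in\partial{\mathbb{R}}^n_{+}$.} For $z\in\overline{{\mathbb{R}}^n_{+}}$ I would split into two cases, working with a version of the estimate in which the inner and outer radii are $r<r'$ and the constant depends on $r'/r$.
\begin{itemize}
\item If $z_n\geq 1/4$, cover ${\mathbb{R}}^n_{+}\cap B(z,1)$ by finitely many balls, with a number depending only on $n$. Those lying inside ${\mathbb{R}}^n_{+}$ at distance $\gtrsim 1$ from the boundary are handled by the interior estimate of Theorem~\ref{ker-sbav} with $p=\infty$ (or any $p$), which bounds $\nabla^k u$ by $\sup|u|$ on a slightly larger ball.
\item The remaining balls, and the whole case $z_n<1/4$, are covered by half-balls centered at boundary points $(w',0)$ with $B((w',0),3/2)\subset B(z,2)$.
\end{itemize}
Thus everything reduces to proving, for $z=(z',0)$ and fixed radii $1<r'<2$,
$$
\sup_{B(z,1)\cap{\mathbb{R}}^n_{+}}|\nabla^k u|\leq C\Big(\sup_{B(z,r')\cap{\mathbb{R}}^n_{+}}|u|+\sum_{\ell\leq k+1}\sup_{B(z,r')\cap\partial{\mathbb{R}}^n_{+}}|\nabla^\ell f|\Big).
$$

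\emph{Boundary estimate.} Fix a cutoff $\theta\in{\mathscr{C}}^\infty_c({\mathbb{R}})$ with $\theta=1$ near $0$, and a tangential cutoff $\psi\in{\mathscr{C}}^\infty_c(B_{n-1}(z',r'))$ equal to $1$ near $\overline{B_{n-1}(z',(1+r')/2)}$. Set $F(x):=\theta(x_n)\psi(x')f(x')$, so that $F\in{\mathscr{C}}^{k+1}$ with $\|F\|_{{\mathscr{C}}^{k+1}}\lesssim\sum_{\ell\le k+1}\sup|\nabla^\ell f|$. Then $v:=u-F$ satisfies $Lv=-LF\in{\mathscr{C}}^{k-1}$ and ${\rm Tr}\,v=0$ on $B_{n-1}(z',(1+r')/2)$.

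Next, pick $p>n$. The Legendre--Hadamard condition \eqref{L-ell.X} implies strong ellipticity together with the complementing (Lopatinski\u{\i}--Shapiro) condition for the Dirichlet problem. Hence the local ADN estimates of \cite{ADNII} on nested half-balls give
$$
\|v\|_{W^{k+1,p}(B(z,1)\cap{\mathbb{R}}^n_{+})}\leq C\big(\|LF\|_{W^{k-1,p}(B(z,r'')\cap{\mathbb{R}}^n_{+})}+\|v\|_{L^p(B(z,r'')\cap{\mathbb{R}}^n_{+})}\big),
$$
with $1<r''<(1+r')/2$. Since $p>n$, Morrey's embedding applied to $\nabla^k v\in W^{1,p}$ bounds $\sup|\nabla^k v|$ by the left side. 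Both terms on the right are controlled by $\sup|u|$ and $\sum_{\ell\le k+1}\sup|\nabla^\ell f|$ over $B(z,r')$. Finally, adding back $\nabla^k F$ yields the claim.

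\emph{Main obstacle.} The hardest step is justifying that the a priori ADN estimate actually applies to $u$, which is only assumed to lie in $W^{1,2}_{\rm loc}$ up to the boundary. I would first upgrade the regularity of $v$ in three stages.
\begin{enumerate}
\item Tangential difference quotients, using the Gårding inequality that follows from \eqref{L-ell.X} on compactly supported functions vanishing on the flat boundary, give $\partial_{x'}\nabla v\in L^2$ near the boundary.
\item The equation, solved for $a_{nn}\partial_n^2 v$ (the matrix $(a^{\alpha\beta}_{nn})$ is invertible by \eqref{L-ell.X} with $\xi=e_n$), controls the pure normal derivative $\partial_n^2 v$.
\item Iterating these two steps gives $v\in W^{k+1,2}$ locally, and then a bootstrap through the $L^p$ theory and Sobolev embeddings reaches $W^{k+1,p}$.
\end{enumerate}
Care is needed at the top order, where $f$ has only $k+1$ derivatives, so $LF\in{\mathscr{C}}^{k-1}\subset W^{k-1,p}_{\rm loc}$, which is exactly what the $W^{k+1,p}$ estimate requires.
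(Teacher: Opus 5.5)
Your proposal is correct and is essentially the argument the paper has in mind. The paper gives no proof of its own: it cites \cite[Corollary~2.4]{MaMiSh} as a consequence of the Agmon--Douglis--Nirenberg a priori estimates and Sobolev embeddings, and your chain (scaling, subtracting the extension $F$, the local $W^{k+1,p}$ estimate with $p>n$, Morrey's embedding, plus the regularity upgrade you sketch) is exactly that. There is one bookkeeping slip in your covering step, which your ratio-dependent version of the boundary estimate already lets you repair. For instance, when $1/2<z_n<1$ no boundary point satisfies $B((w',0),3/2)\subset B(z,2)$, yet $B(z,1)\cap\mathbb{R}^n_{+}$ contains points arbitrarily close to $\partial\mathbb{R}^n_{+}$. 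So instead, for each $x\in B(z,1)\cap\mathbb{R}^n_{+}$ with $x_n<1/4$, use the half-balls of radii $1/4<1/2$ centered at $(x',0)$, which lie inside $B(z,2)$; when $x_n\geq 1/4$, use the interior ball $B(x,1/4)$.
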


Below we record a version of the Divergence Theorem obtained in \cite{GHA.I}, 
which is particularly suitable for the purposes we have in mind. Stating it requires a few 
preliminaries which we dispense with first. We shall write $\mathcal{E}({\mathbb{R}}^n_{+})$
for the space of smooth functions in ${\mathbb{R}}^n_{+}$ equipped with the topology of uniform 
convergence on compact sets for derivatives of any order. Its dual space, $\mathcal{E}'({\mathbb{R}}^n_{+})$,  
may then be identified with the subspace of ${\mathcal{D}}'({\mathbb{R}}^n_{+})$ consisting of those 
distributions which are compactly supported. Hence,
\begin{equation}\label{TDanab.4r}
\mathcal{E}'({\mathbb{R}}^n_{+})\hookrightarrow\mathcal{D}'({\mathbb{R}}^n_{+})
\,\,\text{ and }\,\,L^1_{\rm loc}({\mathbb{R}}^n_{+})\hookrightarrow\mathcal{D}'({\mathbb{R}}^n_{+}).
\end{equation}
For each compact set $K\subset{\mathbb{R}}^n_{+}$, define
$\mathcal{E}'_K({\mathbb{R}}^n_{+}):=\big\{u\in\mathcal{E}'({\mathbb{R}}^n_{+})
:\,{\rm supp}\,u\subset K\big\}$ and consider
\begin{multline}\label{TDY-87764}
\mathcal{E}'_K({\mathbb{R}}^n_{+})+L^1({\mathbb{R}}^n_{+})
:=\big\{u\in\mathcal{D}'({\mathbb{R}}^n_{+}):\,\exists\,v_1\in
\mathcal{E}'_K({\mathbb{R}}^n_{+})\,\text{ and }\,
\exists\,v_2\in L^1({\mathbb{R}}^n_{+})
\\[4pt]
\text{such that }\,
u=v_1+v_2\,\text{ in }\,\mathcal{D}'({\mathbb{R}}^n_{+})\big\}.
\end{multline}
Also, introduce $\mathscr{C}_b^\infty({\mathbb{R}}^n_{+}):=
\mathscr{C}^\infty({\mathbb{R}}^n_{+})\cap L^\infty({\mathbb{R}}^n_{+})$
and let $\big(\mathscr{C}_b^\infty({\mathbb{R}}^n_{+})\big)^\ast$ denote its
algebraic dual. Moreover, we let
${}_{(\mathscr{C}_b^\infty({\mathbb{R}}^n_{+}))^\ast}\big\langle\cdot\,,\,\cdot
\big\rangle_{\mathscr{C}_b^\infty({\mathbb{R}}^n_{+})}$ denote the natural duality pairing
between these spaces. It is useful to observe that
for every compact set $K\subset{\mathbb{R}}^n_{+}$ one has
\begin{equation}\label{TDY-877ii}
\mathcal{E}'_K({\mathbb{R}}^n_{+})+L^1({\mathbb{R}}^n_{+})\subset
\big(\mathscr{C}_b^\infty({\mathbb{R}}^n_{+})\big)^\ast.
\end{equation}
As a consequence of the last part of \cite[Corollary~1.4.2, pp.\,40--41]{GHA.I} we have the following:

\begin{theorem}\label{theor:div-thm}
Assume that $K\subset{\mathbb{R}}^n_{+}$ is a compact set and that
$\vec{F}\in\big[L^1_{\rm loc}({\mathbb{R}}^n_{+})\big]^n$ is a vector
field satisfying the following conditions {\rm (}for some aperture parameter $\kappa>0${\rm )}:
\begin{enumerate}\itemsep=0.2cm
\item [(a)] ${\rm div}\,\vec{F}\in\mathcal{E}'_K({\mathbb{R}}^n_{+})+L^1({\mathbb{R}}^n_{+})$,
where the divergence is taken in the sense of distributions;
\item[(b)] the nontangential maximal function ${\mathcal{N}}_\kappa^{\,{\mathbb{R}}^n_{+}\setminus K}\vec{F}$ 
belongs to $L^1({\mathbb{R}}^{n-1})$;
\item [(c)] the nontangential boundary trace 
$\vec{F}\big|_{\partial{\mathbb{R}}^n_{+}}^{{}^{\kappa-{\rm n.t.}}}$ exists 
{\rm (}in ${\mathbb{C}}^n${\rm )} at ${\mathscr{L}}^{n-1}$-a.e. point in ${\mathbb{R}}^{n-1}$.
\end{enumerate}

\noindent Then, with $e_n:=(0,\dots,0,1)\in{\mathbb{R}}^n$ and ``dot" denoting the
standard inner product in ${\mathbb{R}}^n$,
\begin{equation}\label{eqn:div-form}
{}_{(\mathscr{C}_b^\infty({\mathbb{R}}^n_{+}))^\ast}\big\langle{\rm div}\,\vec{F},1
\big\rangle_{\mathscr{C}_b^\infty({\mathbb{R}}^n_{+})}
=-\int_{{\mathbb{R}}^{n-1}}e_n\cdot
\big(\vec F\,\big|^{{}^{\kappa-{\rm n.t.}}}_{\partial{\mathbb{R}}^n_{+}}\bigr)\,d{\mathscr{L}}^{n-1}.
\end{equation}
\end{theorem}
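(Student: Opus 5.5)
The plan is to realize the pairing $\langle{\rm div}\,\vec F,1\rangle$ as a limit of genuine distributional pairings against smooth compactly supported cutoffs, move the derivative onto the cutoffs, and read off the boundary integral in the limit. First, I make the left side of \eqref{eqn:div-form} concrete. Write ${\rm div}\,\vec F=v_1+v_2$ with $v_1\in\mathcal{E}'_K({\mathbb{R}}^n_{+})$ and $v_2\in L^1({\mathbb{R}}^n_{+})$. Then
\begin{equation*}
\langle{\rm div}\,\vec F,1\rangle=v_1(\psi)+\int_{{\mathbb{R}}^n_{+}}v_2\,d{\mathscr{L}}^n
\end{equation*}
for any $\psi\in{\mathscr{C}}^\infty_c({\mathbb{R}}^n_{+})$ with $\psi\equiv1$ near $K$. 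This value does not depend on the decomposition or on $\psi$. Next, fix $\theta\in{\mathscr{C}}^\infty({\mathbb{R}})$ with $\theta=0$ on $(-\infty,1/2]$ and $\theta=1$ on $[1,\infty)$, and $\eta\in{\mathscr{C}}^\infty_c({\mathbb{R}}^n)$ with $\eta=1$ on $B(0,1)$ and ${\rm supp}\,\eta\subset B(0,2)$. Set
\begin{equation*}
\varphi_{\varepsilon,R}(x):=\theta(x_n/\varepsilon)\,\eta(x/R).
\end{equation*}
For $\varepsilon$ small and $R$ large we have $\varphi_{\varepsilon,R}\equiv1$ near $K$. Hence $v_1(\varphi_{\varepsilon,R})=v_1(\psi)$, while $\int v_2\varphi_{\varepsilon,R}\to\int v_2$ by dominated convergence. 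By the definition of the distributional divergence, and since $\nabla\varphi_{\varepsilon,R}$ is supported away from $K$ where $\vec F$ is locally integrable,
\begin{equation*}
\langle{\rm div}\,\vec F,\varphi_{\varepsilon,R}\rangle
=-\frac1\varepsilon\int_{{\mathbb{R}}^n_{+}}\theta'(x_n/\varepsilon)\eta(x/R)F_n(x)\,dx
-\frac1R\int_{{\mathbb{R}}^n_{+}}\theta(x_n/\varepsilon)(\nabla\eta)(x/R)\cdot\vec F(x)\,dx.
\end{equation*}

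Step 1 lets $\varepsilon\to0^+$ with $R$ fixed. In the first integral substitute $x_n=\varepsilon s$ with $s\in[1/2,1]$. For a.e. $x'$ the points $(x',\varepsilon s)$ approach $(x',0)$ inside $\Gamma_\kappa(x')$, so by (c) the integrand converges to $\theta'(s)\eta(x'/R)\big(e_n\cdot\vec F|^{{}^{\kappa-{\rm n.t.}}}_{\partial{\mathbb{R}}^n_{+}}\big)(x')$. Since these points lie outside $K$ for $\varepsilon$ small, the integrand is dominated by $\|\theta'\|_\infty{\bf 1}_{B_{n-1}(0',2R)}(x')\,\mathcal{N}^{\,{\mathbb{R}}^n_{+}\setminus K}_\kappa\vec F(x')$, which is integrable by (b). Using $\int\theta'=1$, the first term tends to
\begin{equation*}
-\int_{{\mathbb{R}}^{n-1}}\eta(x'/R)\,e_n\cdot\big(\vec F|^{{}^{\kappa-{\rm n.t.}}}_{\partial{\mathbb{R}}^n_{+}}\big)\,dx'.
\end{equation*}
In the second term, $\theta(x_n/\varepsilon)\to1$ by dominated convergence, provided $\vec F\in L^1\big((B(0,2R)\setminus B(0,R))\cap{\mathbb{R}}^n_{+}\big)$. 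This integrability is part of Step 2.

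Step 2, the main obstacle, is to show that $\frac1R\int_{A_R}|\vec F|\,dx\to0$ as $R\to\infty$, where $A_R:=\{R<|x|<2R\}\cap{\mathbb{R}}^n_{+}$. I will use the pointwise bound
\begin{equation*}
|\vec F(x)|\le\aver{B_{n-1}(x',\kappa x_n)}\mathcal{N}^{\,{\mathbb{R}}^n_{+}\setminus K}_\kappa\vec F\,d{\mathscr{L}}^{n-1}
\quad\text{for a.e. }x\in{\mathbb{R}}^n_{+}\setminus K,
\end{equation*}
which holds because $x\in\Gamma_\kappa(y')$ for every $y'$ in that ball. Integrating over $A_R$ (and assuming $R$ is so large that $K\cap A_R=\emptyset$) and using Fubini gives
\begin{equation*}
\frac1R\int_{A_R}|\vec F|\,dx\lesssim\frac1R\int_0^{2R}\int_{{\mathbb{R}}^{n-1}}\mathcal{N}^{\,{\mathbb{R}}^n_{+}\setminus K}_\kappa\vec F(y')\,{\bf 1}_{\{\exists\,x\in A_R:\,|x'-y'|<\kappa t\}}\,dy'\,dt
\lesssim\int_{\{|y'|\ge cR\}\cup\{|y'|\le CR\}}\dots\,,
\end{equation*}
where $c,C$ depend on $\kappa$. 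This crude form is not yet decisive, so I refine it by splitting by height. For heights $t\ge\delta R$, the $y'$-integration carries the factor $(\kappa t)^{-(n-1)}$ times an $x'$-region of measure $\lesssim R^{n-1}$; integrating in $t$ then gives a bound $\lesssim_\delta\int_{|y'|\le CR}\mathcal{N}\vec F\,dy'\cdot R^{-1}\cdot R\cdot R^{-(n-1)}R^{n-1}$. This still needs to be $o(1)$, which I obtain by noting that points with $x_n\ge\delta R$ in $A_R$ lie in cones over a set of measure $\approx R^{n-1}$, so the contribution is $\lesssim\delta^{1-n}\aver{B_{n-1}(0',CR)}\mathcal{N}\vec F\cdot R^{n-1}\cdot R/R$ divided by $R^{n-1}$, i.e.\ $\lesssim\delta^{1-n}R^{1-n}\|\mathcal{N}\vec F\|_{L^1}\to0$. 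For heights $t<\delta R$, every $y'$ involved satisfies $|y'|\ge R/2$ (for $\delta$ small), and the contribution is bounded by $\int_{|y'|\ge R/2}\mathcal{N}\vec F\,dy'\to0$. Letting $R\to\infty$ in Step 1 then yields \eqref{eqn:div-form}, by dominated convergence in $x'$ with dominant $\mathcal{N}^{\,{\mathbb{R}}^n_{+}\setminus K}_\kappa\vec F\in L^1$.
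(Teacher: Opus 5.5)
Your overall scheme is sound: the cutoffs $\varphi_{\varepsilon,R}$, the substitution $x_n=\varepsilon s$ to produce the nontangential trace, and domination by $g:=\mathcal{N}^{\,\mathbb{R}^n_{+}\setminus K}_\kappa\vec F$. It is also a genuinely different, elementary route, since the paper does not prove this statement but takes it from the general Divergence Theorem in \cite[Corollary~1.4.2]{GHA.I}.

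However, Step~2, which you correctly single out as the crux, fails as written. For $x\in A_R$ with $x_n\geq\delta R$, your averaging bound gives $|\vec F(x)|\lesssim\delta^{1-n}R^{1-n}\int_{B_{n-1}(0',CR)}g\,d\mathscr{L}^{n-1}$. Since $\mathscr{L}^n(A_R)\approx R^n$, the high-height part of $R^{-1}\int_{A_R}|\vec F|$ is then only bounded by $R^{-1}\cdot R^n\cdot\delta^{1-n}R^{1-n}\int_{B_{n-1}(0',CR)}g\approx\delta^{1-n}\int_{B_{n-1}(0',CR)}g$. This is the same $O(1)$ quantity as your ``crude'' bound. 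The extra division by $R^{n-1}$ in your refinement is an arithmetic slip, so the claimed bound $\delta^{1-n}R^{1-n}\|g\|_{L^1}$ is not justified.

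No bookkeeping can rescue averaging over the full ball $B_{n-1}(x',\kappa x_n)$. Suppose most of the mass of $g$ lies in $B_{n-1}(0',1)$. Then that average is $\approx R^{1-n}\|g\|_{L^1}$ for all $x\in A_R$ with $x_n\approx R$ and $|x'|<\kappa x_n/2$. That set has volume $\approx R^n$, so the resulting bound on $R^{-1}\int_{A_R}|\vec F|$ stays $\approx\|g\|_{L^1}$.

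The missing idea is to keep the mass of $g$ near the origin out of the estimate. At every Lebesgue point $x\notin K$ of $\vec F$ one has $|\vec F(x)|\leq g(y')$ for \emph{every} $y'\in B_{n-1}(x',\kappa x_n)$. This Lebesgue-point remark is also what justifies your pointwise dominations, because $\mathcal{N}$ is defined via an essential supremum. Consequently you may average $g$ over any subset of that ball of comparable measure.

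For $x\in A_R$ with $x_n\geq\delta R$, set $\rho:=\kappa x_n\geq\kappa\delta R$ and $S:=B_{n-1}(x',\rho)\setminus B_{n-1}(0',\rho/2)$. Then $\mathscr{L}^{n-1}(S)\geq c_n\rho^{n-1}$ and $S\subseteq\{|y'|\geq\kappa\delta R/2\}$. Hence $|\vec F(x)|\leq C(\kappa\delta R)^{1-n}\int_{\{|y'|\geq\kappa\delta R/2\}}g$. The high-height part is therefore at most $C_{n,\kappa}\delta^{1-n}\int_{\{|y'|\geq\kappa\delta R/2\}}g$, which tends to $0$ as $R\to\infty$ with $\delta$ fixed.

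Your low-height estimate is correct. With this repair Step~2 holds and the rest of your argument goes through, giving a self-contained proof in the flat setting. What the citation of \cite{GHA.I} buys instead is a result valid on far more general domains.
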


In turn, Theorem~\ref{theor:div-thm} is used to justify an integration by parts formula which 
is a key technical step in the proof of Theorem~\ref{ta.av-GGG.2A}.

\begin{lemma}\label{Lgav-TeD}
Assume that $L$ is an $M\times M$ constant complex coefficient system as in \eqref{L-def} and \eqref{Def-ELLa.INTR}, 
and fix an aperture parameter $\kappa>0$. Pick two points $x^\star_j\in{\mathbb{R}}^{n}_{+}$ 
and two radii $r_j\in(0,\infty)$, $j\in\{1,2\}$, satisfying 
\begin{equation}\label{CPTs}
r_j<{\rm dist}\,(x^\star_j,\partial{\mathbb{R}}^n_{+})\frac{\kappa}{\sqrt{1+\kappa^2}}
\,\,\text{ for }\,\,j\in\{1,2\},\,\,\text{ and }\,\,|x^\star_1-x^\star_2|>r_1+r_2, 
\end{equation}
then abbreviate $K_j:=\overline{B(x^\star_j,r_j)}$ for $j\in\{1,2\}$.

Next, consider two vector-valued functions
\begin{equation}\label{Tvav-TREINRv}
u=\big(u_{\alpha}\big)_{1\leq\alpha\leq M}\in\big[W^{1,1}_{\rm loc}({\mathbb{R}}^n_{+})\big]^M,\quad
v=\big(v_{\alpha}\big)_{1\leq\alpha\leq M}\in\big[W^{1,1}_{\rm loc}({\mathbb{R}}^n_{+})\big]^M
\end{equation}
satisfying 
\begin{align}\label{hBbac.1}
& {\mathcal{N}}_\kappa^{\,\mathbb{R}^{n}_{+}\setminus K_2}v,\,
{\mathcal{N}}_\kappa^{\,\mathbb{R}^{n}_{+}\setminus K_2}(\nabla v)\in L^1_{\rm loc}({\mathbb{R}}^{n-1}),\quad
L^{\top}v\in\big[{\mathcal{E}}'_{K_2}(\mathbb{R}^{n}_{+})\big]^M,
\\[4pt]
& v\big|^{{}^{\kappa-{\rm n.t.}}}_{\partial{\mathbb{R}}^n_{+}}=0
\,\,\text{ a.e. in }\,\,{\mathbb{R}}^{n-1},\quad
\big(\nabla v\big)\big|_{\partial\mathbb{R}^{n}_{+}}^{{}^{\kappa-{\rm n.t.}}}
\,\,\text{ exists a.e. in }\,\,{\mathbb{R}}^{n-1},
\label{hBbac.2}
\\[4pt]
\label{DiBBa.LLap}
& Lu\in\big[{\mathcal{E}}'_{K_1}(\mathbb{R}^{n}_{+})\big]^M,\quad
u\big|_{\partial\mathbb{R}^{n}_{+}}^{{}^{\kappa-{\rm n.t.}}}\,\,\text{ exists a.e. in }\,\,{\mathbb{R}}^{n-1},
\\[4pt]
& \text{and }\,\,
\big({\mathcal{N}}^{\,\mathbb{R}^{n}_{+}\setminus K_1}_\kappa u\big)\cdot
\big({\mathcal{N}}_\kappa^{\,\mathbb{R}^{n}_{+}\setminus K_2}(\nabla v)\big)\in L^1({\mathbb{R}}^{n-1}).
\label{DiBBa.LLap.BBBB}
\end{align}
Finally, select two scalar functions, 
\begin{equation}\label{Ua-eNBVVatGG}
\begin{array}{c}
\phi\in{\mathscr{C}}^\infty_c({\mathbb{R}}^n_{+})
\,\,\text{ such that $\phi\equiv 1$ near $K_1$ and $\phi\equiv 0$ near $K_2$},
\\[6pt]
\psi\in{\mathscr{C}}^\infty_c({\mathbb{R}}^n_{+})
\,\,\text{ such that $\psi\equiv 1$ near $K_2$ and $\psi\equiv 0$ near $K_1$}.
\end{array}
\end{equation}

Then
\begin{align}\label{Ua-eDPa4.L}
{}_{[{\mathcal{E}}'(\mathbb{R}^{n}_{+})]^M}\big\langle Lu,\phi v
\big\rangle_{[{\mathcal{E}}(\mathbb{R}^{n}_{+})]^M}
=&\,
{}_{[{\mathcal{E}}'(\mathbb{R}^{n}_{+})]^M}\big\langle L^{\top}v,
\psi u\big\rangle_{[{\mathcal{E}}(\mathbb{R}^{n}_{+})]^M}
\nonumber\\[4pt]
&\,+\int_{\mathbb{R}^{n-1}}
\Big(u_\alpha\big|_{\partial\mathbb{R}^{n}_{+}}^{{}^{\kappa-{\rm n.t.}}}\Big) 
a^{\beta\alpha}_{nn}\,\Big[\big(\partial_n v_\beta\big)\Big]
\Big|_{\partial\mathbb{R}^{n}_{+}}^{{}^{\kappa-{\rm n.t.}}}\,d{\mathscr{L}}^{n-1}.
\end{align}
\end{lemma}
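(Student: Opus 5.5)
The plan is to apply Theorem~\ref{theor:div-thm} to the vector field built from the bilinear form of $L$, after localizing so that the singular sets $K_1$ and $K_2$ are handled by the cutoffs. Introduce $\theta:=1-\phi-\psi$, which is smooth, equals $1$ outside a compact subset of ${\mathbb{R}}^n_{+}$, and vanishes near $K_1\cup K_2$. Write the left side of \eqref{Ua-eDPa4.L} as
\begin{equation*}
\langle Lu,\phi v\rangle=\langle Lu,(1-\theta-\psi)v\rangle .
\end{equation*}
Since ${\rm supp}\,Lu\subset K_1$ and $\psi v$, $\theta v$ vanish near $K_1$, the pairing $\langle Lu,\phi v\rangle$ equals $\langle Lu,\chi v\rangle$ for any smooth $\chi$ that is $1$ near $K_1$. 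Similarly, $\langle L^\top v,\psi u\rangle=\langle L^\top v,\chi' u\rangle$ for any $\chi'\equiv1$ near $K_2$.

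Consider the vector field $\vec F=(F_r)_{1\le r\le n}$ with
\begin{equation*}
F_r:=a^{\beta\alpha}_{rs}\,u_\alpha\,\partial_s v_\beta-a^{\beta\alpha}_{sr}\,(\partial_s u_\alpha)\,v_\beta,
\end{equation*}
first on ${\mathbb{R}}^n_{+}\setminus(K_1\cup K_2)$. A formal computation gives
\begin{equation*}
{\rm div}\vec F=u_\alpha(L^\top v)_\alpha-(Lu)_\beta v_\beta .
\end{equation*}
The mixed terms $a^{\beta\alpha}_{rs}\partial_r u_\alpha\partial_s v_\beta$ cancel after relabeling $r\leftrightarrow s$. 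Since $u,v$ are only $W^{1,1}_{\rm loc}$ and $Lu$, $L^\top v$ are compactly supported distributions, this identity must be interpreted distributionally. To make it rigorous, apply Theorem~\ref{theor:div-thm} to the modified field
\begin{equation*}
\vec G:=a^{\beta\alpha}_{rs}(\psi u_\alpha)\partial_s v_\beta-\ldots .
\end{equation*}
A cleaner route uses $\vec F$ built from $(1-\phi)u$ and $(1-\psi)v$ near each singular set. Concretely, I would set $\tilde u:=(1-\phi)u$ and $\tilde v:=(1-\psi)v$. These vanish near $K_1$ and $K_2$ respectively, and agree with $u,v$ near the boundary and near infinity. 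Then form $\vec F$ from the pair $(u,v)$ and compute ${\rm div}\vec F$ in $\mathcal{D}'$ as
\begin{equation*}
{\rm div}\vec F=u\cdot L^\top v-Lu\cdot v ,
\end{equation*}
where the products make sense because $Lu$ is supported in $K_1$, where $v$ is smooth ($L^\top v=0$ near $K_1$ gives smoothness by ellipticity), and symmetrically for $u$ near $K_2$. Pairing these compactly supported pieces with $1$ reproduces $\langle L^\top v,\psi u\rangle-\langle Lu,\phi v\rangle$, by the localization remark above. Hence hypothesis (a) of Theorem~\ref{theor:div-thm} holds with a compact set $K\supset K_1\cup K_2$ (the $L^1$ part being zero).

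Next verify (b) and (c) for $\vec F$ outside $K$. The geometric condition \eqref{CPTs} ensures that the cones see $K_j$ only at bounded distance; in particular $\mathcal{N}_\kappa^{{\mathbb{R}}^n_+\setminus K}$ is controlled by $\mathcal{N}_\kappa^{{\mathbb{R}}^n_+\setminus K_j}$ plus local bounded pieces. The first term of $\vec F$ is bounded by $\mathcal{N}^{\setminus K_1}u\cdot\mathcal{N}^{\setminus K_2}(\nabla v)$, which is in $L^1$ by \eqref{DiBBa.LLap.BBBB}. The second term involves $\nabla u\cdot v$, which is not directly controlled; this is the main obstacle. My plan is to exploit $v|^{\rm n.t.}=0$ together with $\mathcal{N}(\nabla v)$. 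Along the vertical segment, $|v(x',t)|\le\int_0^t|\partial_n v|$, so $|v(y)|\lesssim {\rm dist}(y,\partial{\mathbb{R}}^n_+)\,\mathcal{N}(\nabla v)$ in cones, after enlarging the aperture slightly. Interior estimates (Theorem~\ref{ker-sbav}) give $|\nabla u(y)|\lesssim y_n^{-1}\,\mathcal{N}_{\kappa'}u$. The product is then bounded by $\mathcal{N}u\cdot\mathcal{N}(\nabla v)$.

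Handling the aperture change cleanly will require care. Either \eqref{DiBBa.LLap.BBBB} must be used at a comparable aperture (apertures are interchangeable in $L^1$ norms), or one can truncate $\vec F$ away from the boundary. The same estimate gives the boundary trace in (c): the second term tends to $0$ nontangentially, and the first term tends to $u_\alpha|^{\rm n.t.}\,a^{\beta\alpha}_{rs}(\partial_s v_\beta)|^{\rm n.t.}$. Here the $\nabla v$ trace at boundary points where $v$ vanishes reduces to its normal component, since tangential derivatives of the zero trace vanish. Only the $r=n$ component enters, leaving $a^{\beta\alpha}_{nn}\,u_\alpha\,\partial_n v_\beta$. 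Plugging into \eqref{eqn:div-form} yields \eqref{Ua-eDPa4.L} up to sign bookkeeping, which I would check against the convention $-\int e_n\cdot\vec F|^{\rm n.t.}$.
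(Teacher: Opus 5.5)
Your overall architecture matches the paper's. Up to an overall sign and the harmless swap $a^{\beta\alpha}_{rs}\leftrightarrow a^{\beta\alpha}_{sr}$, your $\vec F$ is the paper's field, and your divergence computation and final sign both check out. Your two pointwise bounds, $|v(y)|\lesssim y_n\,\mathcal{N}(\nabla v)(x')$ and $|\nabla u(y)|\lesssim y_n^{-1}\mathcal{N}u(x')$, are exactly the ones the paper uses.

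However, the step you yourself call the main obstacle is not closed. Hypothesis (c) of Theorem~\ref{theor:div-thm} needs $v_\beta\,\partial_s u_\alpha$ to have a nontangential limit, and nothing in the hypotheses gives a trace of $\nabla u$. Your two bounds only show that this product is \emph{bounded} in the cone, so ``the same estimate'' does not show that it tends to $0$.

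The paper handles this in two stages. It first proves the formula under the extra assumption \eqref{Di-UabTRF.b}. It then applies that case to $u^\varepsilon:=u(\cdot+\varepsilon e_n)$, which is smooth up to the boundary, using the divergence theorem at a smaller aperture $\kappa_o<\kappa$ with the compact set $K_1^\varepsilon\cup K_2$. Finally it lets $\varepsilon\to0^+$ by dominated convergence, with majorant $\mathcal{N}u\cdot\mathcal{N}(\nabla v)$.

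Your direct route can be repaired without this limit, but it needs an idea you did not state. Apply the interior estimates to the null-solution $u-c$, with $c:=(u|^{\rm n.t.})(x')$. This gives $y_n|\nabla u(y)|\lesssim\sup_{B(y,ay_n)}|u-c|\to0$ as $y\to(x',0)$ in $\Gamma_{\kappa_o}(x')$. Hence $v\,\nabla u\to0$ nontangentially at a.e. $x'$, namely wherever $\mathcal{N}(\nabla v)(x')<\infty$.

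Relatedly, the aperture issue must be handled by running the divergence theorem at $\kappa_o<\kappa$. Apertures are not interchangeable in $L^1$ for a product of two nontangential maximal functions. Truncating $\vec F$ away from the boundary would destroy the boundary term.

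Two further steps are wrong as written.

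\emph{The path for $v$.} The vertical segment below $y\in\Gamma_{\kappa_o}(x')$ ends at $(y',0)$ and leaves the cone with vertex $x'$, so it controls $v(y)$ by data at $y'$, not at $x'$. What you need is a path from $(x',0)$ to $y$, of length $\lesssim y_n$, lying inside $\Gamma_{\kappa_o}(x')\setminus K_2$. This is the real role of \eqref{CPTs}, in its form \eqref{CPTs.i}: it prevents $K_2$ from disconnecting the cone. It is not needed for $\mathcal{N}^{\mathbb{R}^n_+\setminus(K_1\cup K_2)}(fg)\leq\mathcal{N}^{\mathbb{R}^n_+\setminus K_1}f\cdot\mathcal{N}^{\mathbb{R}^n_+\setminus K_2}g$, which is trivial.

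\emph{Tangential traces of $\nabla v$.} Their vanishing is false pointwise. For $v(x)=x_1$ one has $v|^{\rm n.t.}(0')=0$ but $(\partial_1 v)|^{\rm n.t.}(0')=1$. The claim holds only a.e., and it uses $v|^{\rm n.t.}=0$ a.e. together with local integrability of $\mathcal{N}v$ and $\mathcal{N}(\nabla v)$. The paper proves it by applying the divergence theorem to the divergence-free field $\vec H$ in \eqref{Ua-nabUGC4D.3}. Alternatively, you can integrate by parts on horizontal slices $\{x_n=t\}$ and let $t\to0^+$ by dominated convergence.
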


\begin{proof}
Note that the ellipticity of $L^{\top}$ (entailed by that of $L$) and the 
fact that $v\in\big[L^1_{\rm loc}({\mathbb{R}}^n_{+})\big]^M$ satisfies
$L^{\top}v\in\big[{\mathcal{E}}'_{K_2}(\mathbb{R}^{n}_{+})\big]^M$ imply that 
\begin{equation}\label{Di-UPs-4}
v\in\big[{\mathscr{C}}^\infty(\mathbb{R}^{n}_{+}\setminus K_2)\big]^M\,\,\text{ and }\,\,
L^{\top}v=0\,\,\text{ pointwise in }\,\,\mathbb{R}^{n}_{+}\setminus K_2.
\end{equation}
Likewise, the conditions on $u$ give  
\begin{equation}\label{Di-UabTRF.a}
u\in\big[{\mathscr{C}}^\infty(\mathbb{R}^{n}_{+}\setminus K_1)\big]^M\,\,\text{ and }\,\,
Lu=0\,\,\text{ pointwise in }\,\,\mathbb{R}^{n}_{+}\setminus K_1.
\end{equation}
We shall first establish formula \eqref{Ua-eDPa4.L} under the additional assumption that 
\begin{equation}\label{Di-UabTRF.b}
\begin{array}{c}
\big({\mathcal{N}}_\kappa^{\,\mathbb{R}^{n}_{+}\setminus K_2}v\big)\cdot
\big({\mathcal{N}}^{\,\mathbb{R}^{n}_{+}\setminus K_1}_\kappa(\nabla u)\big)\in L^1({\mathbb{R}}^{n-1})
\,\,\text{ and }\,\,(\nabla u)\Big|_{\partial\mathbb{R}^{n}_{+}}^{{}^{\kappa-{\rm n.t.}}}\,\,
\text{ exists a.e. in }\,\,{\mathbb{R}}^{n-1}.
\end{array}
\end{equation}
Assuming that this is the case define 
\begin{equation}\label{Yab-HBBaR46}
\vec{F}:=\Big(v_{\alpha}\,a^{\alpha\beta}_{jk}\,\partial_k u_\beta
-u_\alpha a^{\beta\alpha}_{kj}\partial_k v_{\beta}\Big)_{1\le j\le n}
\,\,\,\text{ in }\,\,{\mathbb{R}}^n_{+}.
\end{equation}
Then \eqref{Yab-HBBaR46} entails 
\begin{equation}\label{Yab-HBAAA.h5}
\big|\vec{F}\big|\leq C\big(|v||\nabla u|+|u|\big|\nabla v\big|\big)
\,\,\text{ a.e. in }\,\,{\mathbb{R}}^n_{+}.
\end{equation}
Note that $|v|$ is locally absolutely integrable (by \eqref{Tvav-TREINRv}), 
while $|\nabla u|$ is bounded in a neighborhood of $K_2$ (by \eqref{Di-UabTRF.a} 
and the fact that $K_1\cap K_2=\varnothing$), hence the product $|v||\nabla u|$ is 
absolutely integrable in a neighborhood of $K_2$. Away from $K_2$, we have that 
$|v|$ is locally bounded (by \eqref{Di-UPs-4}), while $|\nabla u|$ is locally 
integrable (by \eqref{Tvav-TREINRv}). Thus, the product $|v||\nabla u|$ is also 
locally absolutely integrable away from $K_2$. In summary, 
$|v||\nabla u|\in L^1_{\rm loc}({\mathbb{R}}^n_{+})$. In a similar manner, we also 
obtain that $|u||\nabla v|\in L^1_{\rm loc}({\mathbb{R}}^n_{+})$, 
hence ultimately, 
\begin{equation}\label{Yab-HBAAA.aF}
\vec{F}\in\big[L^1_{\rm loc}({\mathbb{R}}^n_{+})\big]^n.
\end{equation}
In particular, it makes sense to consider ${\rm div}\vec{F}$ in the sense of 
distributions in ${\mathbb{R}}^n_{+}$. In fact, a direct calculation gives
\begin{equation}\label{div-F.Req}
{\rm div}\vec{F}=-u_{\alpha}\,(L^{\top}v)_\alpha+v_{\alpha}\,(Lu)_\alpha
\,\,\,\text{ in }\,\,{\mathcal{D}}'({\mathbb{R}}^n_{+}).
\end{equation}
Consequently, if $K_\star:=K_1\cup K_2$ then $K_\star$ is a compact 
subset of ${\mathbb{R}}^n_{+}$ and, as seen from \eqref{div-F.Req}, the last membership in \eqref{hBbac.1},
the membership in \eqref{DiBBa.LLap}, \eqref{Di-UPs-4}-\eqref{Di-UabTRF.a}, we have
\begin{equation}\label{div-F.Req.3}
{\rm div}\vec{F}\in{\mathcal{E}}'_{K_\star}({\mathbb{R}}^n_{+}). 
\end{equation}
Furthermore, \eqref{Yab-HBAAA.h5} gives
\begin{equation}\label{Yab-HBna-TAn}
{\mathcal{N}}^{\,\mathbb{R}^{n}_{+}\setminus K_\star}_{\kappa}\vec{F}
\leq C\Big({\mathcal{N}}^{\,\mathbb{R}^{n}_{+}\setminus K_2}_{\kappa}v\cdot
{\mathcal{N}}^{\,\mathbb{R}^{n}_{+}\setminus K_1}_{\kappa}(\nabla u) 
+{\mathcal{N}}^{\,\mathbb{R}^{n}_{+}\setminus K_1}_{\kappa}u
\cdot{\mathcal{N}}^{\,\mathbb{R}^{n}_{+}\setminus K_2}_{\kappa}(\nabla v)\Big)
\,\,\text{ in }\,\,{\mathbb{R}}^{n-1}.
\end{equation}
From \eqref{Yab-HBna-TAn}, the first working assumption made in \eqref{Di-UabTRF.b}, and \eqref{DiBBa.LLap.BBBB}
we therefore obtain (also bearing in mind that the nontangential maximal function is lower-semicontinuous, hence 
measurable)
\begin{equation}\label{Yab-HBna-TAn.2}
{\mathcal{N}}^{\,\mathbb{R}^{n}_{+}\setminus K_\star}_{\kappa}\vec{F}\in L^1({\mathbb{R}}^{n-1}).
\end{equation}
Finally, from \eqref{Yab-HBBaR46}, \eqref{hBbac.2}, the second working assumption made in \eqref{Di-UabTRF.b}, 
and the middle condition in \eqref{DiBBa.LLap}, we see that
$\vec{F}\,\big|^{{}^{\kappa-{\rm n.t.}}}_{\partial\mathbb{R}^n_+}$
exists a.e. in ${\mathbb{R}}^{n-1}$ and, in fact, 
\begin{equation}\label{KnaJHcwkfc}
\vec{F}\,\Big|^{{}^{\kappa-{\rm n.t.}}}_{\partial\mathbb{R}^n_+}
=-\Big(\Big(u_\alpha\Big|^{{}^{\kappa-{\rm n.t.}}}_{\partial\mathbb{R}^n_+}\Big)
a^{\beta\alpha}_{kj}\Big(\big(\partial_k v_{\beta}\big)
\Big|^{{}^{\kappa-{\rm n.t.}}}_{\partial\mathbb{R}^n_+}\Big)_{1\leq j\le n}
\,\,\text{ a.e. on }\,\,{\mathbb{R}}^{n-1}.
\end{equation}
Then Theorem~\ref{theor:div-thm} applies and on account of \eqref{div-F.Req}
and \eqref{KnaJHcwkfc} yields, for any two given functions $\psi,\phi$ 
as in \eqref{Ua-eNBVVatGG},
\begin{align}\label{Uahg-ihGPPa4}
&\hskip -0.40in
-{}_{[{\mathcal{E}}'(\mathbb{R}^{n}_{+})]^M}\big\langle L^{\top}v,
\psi u\big\rangle_{[{\mathcal{E}}(\mathbb{R}^{n}_{+})]^M}
+{}_{[{\mathcal{E}}'(\mathbb{R}^{n}_{+})]^M}\big\langle Lu\,,\,
\phi\,v\big\rangle_{[{\mathcal{E}}(\mathbb{R}^{n}_{+})]^M}
\nonumber\\[4pt]
&\hskip 0.80in
={}_{(\mathscr{C}_b^\infty(\mathbb{R}^n_+))^*}\big\langle{\rm div}\vec{F},1
\big\rangle_{\mathscr{C}_b^\infty(\mathbb{R}^n_+)} 
=-\int_{\mathbb{R}^{n-1}}e_n\cdot
\big(\vec F\,\big|^{{}^{\kappa-{\rm n.t.}}}_{\partial\mathbb{R}^n_+}\bigr)\,d{\mathscr{L}}^{n-1}
\nonumber\\[4pt]
&\hskip 0.80in
=\int_{\mathbb{R}^{n-1}}
\Big(u_\alpha\big|_{\partial\mathbb{R}^{n}_{+}}^{{}^{\kappa-{\rm n.t.}}}\Big)a^{\beta\alpha}_{kn}
\Big[\big(\partial_k v_{\beta}\big)
\Big]\Big|_{\partial\mathbb{R}^{n}_{+}}^{{}^{\kappa-{\rm n.t.}}}\,d{\mathscr{L}}^{n-1}.
\end{align}

At this stage, we claim that 
\begin{equation}\label{Ua-nabUGC4D}
\begin{array}{c}
\Big[\big(\partial_k v_\beta\big)\Big]
\Big|_{\partial\mathbb{R}^{n}_{+}}^{{}^{\kappa-{\rm n.t.}}}=0
\quad\text{ a.e. on }\,\,\partial{\mathbb{R}}^n_{+}\equiv\mathbb{R}^{n-1},
\\[10pt]
\forall\,k\in\{1,\dots,n-1\}\,\,\text{ and }\,\,\forall\,\beta\in\{1,\dots,M\}.
\end{array}
\end{equation}
To see that this is the case, fix $k\in\{1,\dots,n-1\}$ along with 
$\beta\in\{1,\dots,M\}$, and pick an arbitrary function 
\begin{equation}\label{Ua-nabUGC4D.2}
\varphi\in{\mathscr{C}}^\infty_c({\mathbb{R}}^n)\,\,\text{ such that }\,\,
K_2\cap{\rm supp}\,\varphi=\emptyset. 
\end{equation}
Next, consider the vector field
\begin{equation}\label{Ua-nabUGC4D.3}
\vec{H}:=\Big(\big(\partial_k v_\beta\big)\varphi+v_\beta\partial_k\varphi\Big)e_n
-\Big(\big(\partial_n v_{\beta}\big)\varphi+v_{\beta}\partial_n\varphi\Big)e_k
\,\,\,\text{ in }\,\,{\mathbb{R}}^n_{+}.
\end{equation}
Note that 
\begin{equation}\label{Ua-nabUGC4D.4}
\vec{H}\in\big[{\mathscr{C}}^\infty({\mathbb{R}}^n_{+})\big]^n\,\,\text{ and }\,\,
{\rm div}\vec{H}=0\,\,\text{ in }\,\,{\mathcal{D}}'({\mathbb{R}}^n_{+}),
\end{equation}
by \eqref{Ua-nabUGC4D.3}, \eqref{Ua-nabUGC4D.2}, and \eqref{Di-UPs-4}. 
Also, since ${\mathcal{N}}_{\kappa}\varphi$, ${\mathcal{N}}_{\kappa}(\nabla\varphi)$ 
are bounded functions with compact support in ${\mathbb{R}}^{n-1}$, we have
\begin{equation}\label{Ua-nabUGC4D.5}
{\mathcal{N}}_{\kappa}\vec{H}\in L^1({\mathbb{R}}^{n-1})
\end{equation}
by \eqref{Ua-nabUGC4D.3}, \eqref{Ua-nabUGC4D.2}, and \eqref{hBbac.1}.
In addition, thanks to \eqref{hBbac.2}, 
\begin{equation}\label{Ua-nabUGC4D.6}
\vec{H}\Big|_{\partial\mathbb{R}^{n}_{+}}^{{}^{\kappa-{\rm n.t.}}}
=\big(\varphi\big|_{\partial\mathbb{R}^{n}_{+}}\big)
\Big[\big(\partial_k v_{\beta}\big)\Big]
\Big|_{\partial\mathbb{R}^{n}_{+}}^{{}^{\kappa-{\rm n.t.}}}\,e_n
-\big(\varphi\big|_{\partial\mathbb{R}^{n}_{+}}\big)
\Big[\big(\partial_n v_{\beta}\big)\Big]
\Big|_{\partial\mathbb{R}^{n}_{+}}^{{}^{\kappa-{\rm n.t.}}}\,e_k,
\end{equation}
at a.e. point on ${\mathbb{R}}^{n-1}$.
Granted this and keeping in mind that $k\in\{1,\dots,n-1\}$, 
we therefore obtain 
\begin{equation}\label{Ua-nabUGC4D.7}
e_n\cdot\Big(\vec{H}\Big|_{\partial\mathbb{R}^{n}_{+}}^{{}^{\kappa-{\rm n.t.}}}\Big)
=\big(\varphi\big|_{\partial\mathbb{R}^{n}_{+}}\big)
\Big[\big(\partial_k v_{\beta}\big)\Big]
\Big|_{\partial\mathbb{R}^{n}_{+}}^{{}^{\kappa-{\rm n.t.}}}
\,\,\text{ a.e. in }\,\,{\mathbb{R}}^{n-1}.
\end{equation}
Collectively, \eqref{Ua-nabUGC4D.4}-\eqref{Ua-nabUGC4D.6} ensure that 
Theorem~\ref{theor:div-thm} is applicable to the vector field $\vec{H}$. 
Based on the second condition in \eqref{Ua-nabUGC4D.4} and \eqref{Ua-nabUGC4D.7} 
we may therefore write 
\begin{align}\label{Phgav-Y54V}
0 & ={}_{(\mathscr{C}_b^\infty(\mathbb{R}^n_+))^*}\big\langle{\rm div}\vec{H},1
\big\rangle_{\mathscr{C}_b^\infty(\mathbb{R}^n_+)} 
=-\int_{\mathbb{R}^{n-1}}e_n\cdot
\big(\vec{H}\,\big|^{{}^{\kappa-{\rm n.t.}}}_{\partial\mathbb{R}^n_+}\big)\,d{\mathscr{L}}^{n-1}
\nonumber\\[4pt]
&=-\int_{\mathbb{R}^{n-1}}\big(\varphi\big|_{\partial\mathbb{R}^{n}_{+}}\big)
\Big[\big(\partial_k v_{\beta}\big)
\Big]\Big|_{\partial\mathbb{R}^{n}_{+}}^{{}^{\kappa-{\rm n.t.}}}\,d{\mathscr{L}}^{n-1}.
\end{align}
On the other hand, 
\begin{equation}\label{uag-tREW.1}
\Big\{\varphi\big|_{\partial\mathbb{R}^{n}_{+}}:\,
\varphi\in{\mathscr{C}}^\infty_c({\mathbb{R}}^n)\,\,
\text{ such that }\,\,K_2\cap{\rm supp}\,\varphi=\varnothing\Big\}
={\mathscr{C}}^\infty_c({\mathbb{R}}^{n-1})
\end{equation}
while from \eqref{hBbac.1}-\eqref{hBbac.2} we have 
\begin{equation}\label{ubav-KBav4fE}
\Big[\big(\partial_k v_{\beta}\big)
\Big]\Big|_{\partial\mathbb{R}^{n}_{+}}^{{}^{\kappa-{\rm n.t.}}}\in L^1_{\rm loc}({\mathbb{R}}^{n-1}).
\end{equation}
Together, \eqref{ubav-KBav4fE}, \eqref{Phgav-Y54V} and \eqref{uag-tREW.1} 
ultimately prove that $\big[\big(\partial_k v_{\beta}\big)\big]
\big|_{\partial\mathbb{R}^{n}_{+}}^{{}^{\kappa-{\rm n.t.}}}=0$ a.e. in ${\mathbb{R}}^{n-1}$, 
finishing the justification of the claim made in \eqref{Ua-nabUGC4D}. 
In turn, \eqref{Uahg-ihGPPa4} and \eqref{Ua-nabUGC4D} prove \eqref{Ua-eDPa4.L} 
under the additional assumption \eqref{Di-UabTRF.b}. 

To dispense with \eqref{Di-UabTRF.b}, fix $\phi,\psi$ as in \eqref{Ua-eNBVVatGG}.
For each $\varepsilon>0$ define $u^\varepsilon=(u^\varepsilon_\alpha)_{1\leq\alpha\leq M}$ by 
\begin{equation}\label{Nvav-T54.a}
u^\varepsilon(x):=u(x+\varepsilon e_n),\qquad\forall\,x\in{\mathbb{R}}^n_{+}.
\end{equation}
Then $u^\varepsilon\in\big[W^{1,1}_{\rm loc}({\mathbb{R}}^n_{+})\big]^{M}$ and $u^\varepsilon$ extends to a  
${\mathscr{C}}^\infty$ function in a two-sided neighborhood of $\partial{\mathbb{R}}^n_{+}$ provided 
$\varepsilon>0$ is small enough. In particular, both 
$u^\varepsilon\big|_{\partial\mathbb{R}^{n}_{+}}^{{}^{\kappa-{\rm n.t.}}}$ and 
$(\nabla u^\varepsilon)\big|_{\partial\mathbb{R}^{n}_{+}}^{{}^{\kappa-{\rm n.t.}}}$ exist a.e. in ${\mathbb{R}}^{n-1}$.
Also, if $0<\varepsilon<{\rm dist}\,(K_1,\partial{\mathbb{R}}^n_{+})$ it follows that 
$K_1-\varepsilon e_n$ is a compact subset of ${\mathbb{R}}^n_{+}$ and 
$Lu^\varepsilon\in\big[{\mathcal{E}}'_{K_1-\varepsilon e_n}(\mathbb{R}^{n}_{+})\big]^M$.

Moving on, fix $\kappa_o\in(0,\kappa)$ such that 
\begin{equation}\label{CPTs.i}
r_j<{\rm dist}\,(x^\star_j,\partial{\mathbb{R}}^n_{+})\frac{\kappa_o}{\sqrt{1+\kappa^2_o}}
\,\,\text{ for }\,\,j\in\{1,2\}.
\end{equation}
Let us now choose $x'\in{\mathbb{R}}^{n-1}\equiv\partial{\mathbb{R}}^n_{+}$ 
with the property that $\big(v\big|^{{}^{\kappa-{\rm n.t.}}}_{\partial{\mathbb{R}}^n_{+}}\big)(x')=0$.
We claim that 
\begin{equation}\label{Cr654.A-HM.1}
|v(y)|\leq Cy_n\cdot\big({\mathcal{N}}^{\,\mathbb{R}^{n}_{+}\setminus K_2}_{\kappa}(\nabla v)\big)(x')
\,\,\text{ for each }\,\,y=(y',y_n)\in\Gamma_{\kappa_o}(x')\setminus K_2.
\end{equation}
To see this, let $y$ be as in \eqref{Cr654.A-HM.1} and choose a rectifiable path 
$\gamma:[0,1]\to\overline{{\mathbb{R}}^n_{+}}$ joining $(x',0)$ with $y$, whose length is $\leq Cy_n$, 
and such that $\gamma((0,1))\subseteq\Gamma_{\kappa_o}(x')\setminus K_2$.
Then, for some constant $C\in(0,\infty)$ independent of $x'$ and $y$, we may estimate 
\begin{align}\label{Cr654.A-HM.1.xxx}
|v(y)| &=\Big|v(y)-\big(v\big|^{{}^{\kappa-{\rm n.t.}}}_{\partial{\mathbb{R}}^n_{+}}\big)(x')\Big|
=\Big|\int_0^1\frac{d}{dt}[v(\gamma(t))]\,dt\Big|
\nonumber\\[4pt]
&=\Big|\int_0^1(\nabla v)(\gamma(t))\cdot\gamma'(t)\,dt\Big|
\leq\Big(\sup_{\xi\in\gamma((0,1))}|(\nabla v)(\xi)|\Big)\int_0^1|\gamma'(t)|\,dt
\nonumber\\[4pt]
&\leq Cy_n\cdot\big({\mathcal{N}}^{\,\mathbb{R}^{n}_{+}\setminus K_2}_{\kappa_o}(\nabla v)\big)(x')
\leq Cy_n\cdot\big({\mathcal{N}}^{\,\mathbb{R}^{n}_{+}\setminus K_2}_{\kappa}(\nabla v)\big)(x'),
\end{align}
using the Fundamental Theorem of Calculus, Chain Rule, and \eqref{NT-Fct.23}. This establishes \eqref{Cr654.A-HM.1}.

For the remainder of the proof assume that the parameter $\varepsilon>0$ is sufficiently small, say 
$\varepsilon<\frac{1}{2}\,{\rm dist}\,(K_1,\partial{\mathbb{R}}^n_{+})$, and define 
\begin{align}\label{Cr654.A-HM.2-FDS.aTgab}
K_1^\varepsilon:=\{z\in{\mathbb{R}}^n:{\rm dist}\,(z,K_1-\varepsilon e_n)\leq\varepsilon\}.
\end{align}
This is a compact subset of ${\mathbb{R}}^n_{+}$. Next, choose some sufficiently small 
parameter $a\in(0,1)$. Specifically, start with some parameter $a$ satisfying 
\begin{align}\label{Cr654.A-HM.2-FDS.aaa}
0<a<\frac{\kappa-\kappa_o}{1+\kappa}
\end{align}
and which is small enough so that 
\begin{align}\label{Cr654.A-HM.2-FDS.a344}
B(y,a\cdot y_n)\cap(K_1-\varepsilon e_n)=\varnothing\,\,\text{ for each }\,\,
y\in{\mathbb{R}}^n_{+}\setminus K_1^\varepsilon.
\end{align}
To see why this may be arranged, pick $R:=2\,\sup\{z_n:\,z=(z',z_n)\in K_1-\varepsilon e_n\}$
then take $a<\min\big\{1/2\,,\,\varepsilon/R\big\}$. Pick an arbitrary point 
$y=(y',y_n)\in{\mathbb{R}}^n_{+}\setminus K_1^\varepsilon$. On the one hand, if $y_n\geq R$ then 
$B(y,a\cdot y_n)$ lies above the strip $\{z=(z',z_n)\in{\mathbb{R}}^n_{+}:z_n<(1-a)R\}$ 
and the latter strip contains the set $K_1-\varepsilon e_n$. Thus, the disjointness condition in 
\eqref{Cr654.A-HM.2-FDS.a344} holds in this case. On the other hand, if $y_n<R$ then 
$a\cdot y_n<a\cdot R<\varepsilon$, and since $B(y,\varepsilon)$ does not intersect 
$K_1-\varepsilon e_n$ (given that ${\rm dist}\,(y,K_1-\varepsilon e_n)>\varepsilon$
since $y\in{\mathbb{R}}^n_{+}\setminus K_1^\varepsilon$), the disjointness condition
in \eqref{Cr654.A-HM.2-FDS.a344} holds in this case as well. 

Having chosen $a$ as in \eqref{Cr654.A-HM.2-FDS.aaa} so that \eqref{Cr654.A-HM.2-FDS.a344} holds, we now fix an arbitrary point 
$y\in\Gamma_{\kappa_o}(x')\setminus K_1^\varepsilon$. Observe that having $z=(z',z_n)\in B(y,a\cdot y_n)$ entails 
\begin{align}\label{Cr654.A-HM.2-FDS.1}
y_n\leq z_n+|z-y|<z_n+a\cdot y_n\Longrightarrow y_n<(1-a)^{-1}z_n,
\end{align}
which, bearing in mind the current location of $y$, permits us to conclude that  
\begin{align}\label{Cr654.A-HM.2-FDS.2}
|z'-x'| &\leq|z'-y'|+|y'-x'|\leq|z-y|+\kappa_o\cdot y_n<a\cdot y_n+\kappa_o\cdot y_n
\nonumber\\[6pt]
&=(\kappa_o+a)y_n<\frac{\kappa_o+a}{1-a}z_n<\kappa z_n,
\end{align}
where the last inequality is guaranteed by \eqref{Cr654.A-HM.2-FDS.aaa}.
From \eqref{Cr654.A-HM.2-FDS.2} we see that $z\in\Gamma_{\kappa}(x')$.
Together with \eqref{Cr654.A-HM.2-FDS.a344}, this proves that 
\begin{align}\label{Cr654.A-HM.2-FDS.a344.ag}
B(y,a\cdot y_n)\subseteq\Gamma_{\kappa}(x')\setminus(K_1-\varepsilon e_n)\,\,\text{ for each }\,\,
y\in\Gamma_{\kappa_o}(x')\setminus K_1^\varepsilon.
\end{align}
Using interior estimates (cf. Theorem~\ref{ker-sbav}) for the function $u^\varepsilon$ we may then write 
\begin{align}\label{Cr654.A-HM.2-bis}
|(\nabla u^\varepsilon)(y)| &\leq\frac{C}{y_n}\meanint_{B(y,a\cdot y_n)}|u^\varepsilon(z)|\,dz
\nonumber\\[6pt]
&\leq Cy_n^{-1}\cdot
\sup_{z\in\Gamma_{\kappa}(x')\setminus(K_1-\varepsilon e_n)}|u^\varepsilon(z)|
\nonumber\\[6pt]
&\leq Cy_n^{-1}\cdot\big({\mathcal{N}}_{\kappa}^{\,\mathbb{R}^{n}_{+}\setminus (K_1-\varepsilon e_n)}u^\varepsilon\big)(x')
\nonumber\\[6pt]
&\leq Cy_n^{-1}\cdot\big({\mathcal{N}}_{\kappa}^{\,\mathbb{R}^{n}_{+}\setminus K_1}u\big)(x'),
\end{align}
where $C=C(L,n,a)\in(0,\infty)$. Combining \eqref{Cr654.A-HM.1} with \eqref{Cr654.A-HM.2-bis} then gives
\begin{align}\label{Cr654.A-HM.2-b22}
\big({\mathcal{N}}^{\,\mathbb{R}^{n}_{+}\setminus (K_1^\varepsilon\cup K_2)}_{\kappa_o}
(|v||\nabla u^\varepsilon|)\big)(x')
\leq C\big({\mathcal{N}}^{\,\mathbb{R}^{n}_{+}\setminus K_2}_{\kappa}(\nabla v)\big)(x')
\big({\mathcal{N}}^{\,\mathbb{R}^{n}_{+}\setminus K_1}_{\kappa}u\big)(x').
\end{align}
Also, it is clear that 
\begin{align}\label{Cr654.A-HM.2-b22.bb}
\big({\mathcal{N}}^{\,\mathbb{R}^{n}_{+}\setminus (K_1^\varepsilon\cup K_2)}_{\kappa}(|\nabla v||u^\varepsilon|)\big)(x')
\leq C\big({\mathcal{N}}^{\,\mathbb{R}^{n}_{+}\setminus K_2}_{\kappa}(\nabla v)\big)(x')
\big({\mathcal{N}}^{\,\mathbb{R}^{n}_{+}\setminus K_1}_{\kappa}u\big)(x').
\end{align}
In concert with \eqref{DiBBa.LLap.BBBB}, these ultimately show that if we now define 
\begin{equation}\label{Yab-HBBaR46-bis}
\vec{F}^\varepsilon:=\Big(v_{\alpha}\,a^{\alpha\beta}_{jk}\,\partial_k u^\varepsilon_\beta
-u^\varepsilon_\alpha a^{\beta\alpha}_{kj}\partial_k v_{\beta}\Big)_{1\le j\le n}
\,\,\,\text{ in }\,\,{\mathbb{R}}^n_{+},
\end{equation}
then 
\begin{equation}\label{Yab-HBna-TAn.2-bis}
{\mathcal{N}}^{\,\mathbb{R}^{n}_{+}\setminus(K_1^\varepsilon\cup K_2)}_{\kappa_o}\vec{F}^\varepsilon\in L^1({\mathbb{R}}^{n-1}).
\end{equation}
Granted this, the same type of argument as in the first part of the proof 
relying on Theorem~\ref{theor:div-thm} (now applied to $\vec{F}^\varepsilon$ in place of $\vec{F}$, with 
$K_1^\varepsilon\cup K_2$ in place of $K_\star$, and with for the aperture parameter $\kappa_o$) gives that
\begin{align}\label{Ua-eDPa4.LAC}
{}_{[{\mathcal{E}}'(\mathbb{R}^{n}_{+})]^M}\big\langle Lu^\varepsilon,\phi v
\big\rangle_{[{\mathcal{E}}(\mathbb{R}^{n}_{+})]^M}
=&\,{}_{[{\mathcal{E}}'(\mathbb{R}^{n}_{+})]^M}\big\langle L^{\top}v,
\psi u^\varepsilon\big\rangle_{[{\mathcal{E}}(\mathbb{R}^{n}_{+})]^M}
\nonumber\\[4pt]
&\,+\int_{\mathbb{R}^{n-1}}
\Big(u^\varepsilon_\alpha\big|_{\partial\mathbb{R}^{n}_{+}}^{{}^{\kappa-{\rm n.t.}}}\Big) 
a^{\beta\alpha}_{nn}\,\Big[\big(\partial_n v_\beta\big)\Big]
\Big|_{\partial\mathbb{R}^{n}_{+}}^{{}^{\kappa-{\rm n.t.}}}\,d{\mathscr{L}}^{n-1}.
\end{align}

There remains to eliminate $\varepsilon$. To this end, observe that 
\begin{equation}\label{Ua-eDPa4.LAC.2}
{}_{[{\mathcal{E}}'(\mathbb{R}^{n}_{+})]^M}\big\langle Lu^\varepsilon,\phi v
\big\rangle_{[{\mathcal{E}}(\mathbb{R}^{n}_{+})]^M}
={}_{[{\mathcal{E}}'(\mathbb{R}^{n}_{+})]^M}\big\langle 
Lu\,,\,\phi(\cdot-\varepsilon e_n)v(\cdot-\varepsilon e_n)
\big\rangle_{[{\mathcal{E}}(\mathbb{R}^{n}_{+})]^M},
\end{equation}
and that (as seen from the smoothness properties of the functions involved)
\begin{equation}\label{naIaPa.2}
\lim\limits_{\varepsilon\to 0^{+}}
\big[\phi(\cdot-\varepsilon e_n)v(\cdot-\varepsilon e_n)\big]=\phi\,v
\,\,\text{ and }\,\,\lim\limits_{\varepsilon\to 0^{+}}\big[\psi u^\varepsilon\big]
=\psi\,u\,\,\text{ in }\,\,{\mathcal{E}}(\mathbb{R}^{n}_{+}).
\end{equation}
Hence, 
\begin{equation}\label{naIaPa.2.iiia}
\lim\limits_{\varepsilon\to 0^{+}}
{}_{[{\mathcal{E}}'(\mathbb{R}^{n}_{+})]^M}\big\langle Lu^\varepsilon,\phi v
\big\rangle_{[{\mathcal{E}}(\mathbb{R}^{n}_{+})]^M}
={}_{[{\mathcal{E}}'(\mathbb{R}^{n}_{+})]^M}\big\langle Lu,\phi v
\big\rangle_{[{\mathcal{E}}(\mathbb{R}^{n}_{+})]^M}
\end{equation}
and
\begin{equation}\label{naIaPa.2.iiib}
\lim\limits_{\varepsilon\to 0^{+}}
{}_{[{\mathcal{E}}'(\mathbb{R}^{n}_{+})]^M}\big\langle L^{\top}v,
\psi u^\varepsilon\big\rangle_{[{\mathcal{E}}(\mathbb{R}^{n}_{+})]^M}
={}_{[{\mathcal{E}}'(\mathbb{R}^{n}_{+})]^M}\big\langle L^{\top}v,
\psi u\big\rangle_{[{\mathcal{E}}(\mathbb{R}^{n}_{+})]^M}.
\end{equation}
Moreover, we have $|u(\cdot+\varepsilon e_n)|\leq{\mathcal{N}}^{\,\mathbb{R}^{n}_{+}\setminus K_1}_\kappa u$ 
on $\mathbb{R}^{n-1}$, and for each $\alpha\in\{1,\dots,M\}$ the last condition in \eqref{DiBBa.LLap} implies that 
\begin{equation}\label{naIaPa.3}
\lim\limits_{\varepsilon\to 0^{+}}u_\alpha(\cdot+\varepsilon e_n)
=u_\alpha\big|_{\partial\mathbb{R}^{n}_{+}}^{{}^{\kappa-{\rm n.t.}}}
\,\,\text{ a.e. in }\,\,\mathbb{R}^{n-1}.
\end{equation}
Consequently, formula \eqref{Ua-eDPa4.L} follows by letting $\varepsilon\to 0^{+}$
in \eqref{Ua-eDPa4.LAC}, on account of \eqref{Ua-eDPa4.LAC.2}-\eqref{naIaPa.3}, 
\eqref{DiBBa.LLap.BBBB}, and Lebesgue's Dominated Convergence Theorem.
\end{proof}

We conclude this section by establishing a regularity result which is going to play an important 
role in the proof of Theorem~\ref{ta.av-GGG.2A}.

\begin{lemma}\label{UYfa-uVdssk}
Assume that $L$ is an $M\times M$ system with constant complex coefficients as in \eqref{L-def}-\eqref{L-ell.X},
and recall the Agmon-Douglis-Nirenberg kernel $K^L:{\mathbb{R}}^n_{+}\to{\mathbb{C}}^{M\times M}$ associated with $L$
as in Theorem~\ref{ya-T4-fav}. Also, suppose $\psi\in\big[{\mathscr{C}}^\infty({\mathbb{R}}^{n-1})\big]^M$ is a ${\mathbb{C}}^{M}$-valued 
function with the property that there exists $N>-1$ such that for each multi-index
$\gamma'\in{\mathbb{N}}_0^{n-1}$ one can find $C_{\gamma'}\in(0,\infty)$ for which 
\begin{equation}\label{TFFV-b54EE.1}
\big|\big(\partial^{\gamma'}\psi\big)(z')\big|\leq C_{\gamma'}\big[|z'|+1]^{-N-|\gamma'|},
\qquad\forall\,z'\in{\mathbb{R}}^{n-1}.
\end{equation}
Finally, define 
\begin{align}\label{eq1.6Adaf.Rd}
u(x',t):=(P^L_t\ast\psi)(x')=\int_{{\mathbb{R}}^{n-1}}K^L(x'-y',t)\psi(y')\,dy',\qquad
\forall\,(x',t)\in{\mathbb{R}}^n_{+}.
\end{align}

Then 
\begin{equation}\label{Lnabv-ufwhJ}
u\in\big[{\mathscr{C}}^\infty\big(\overline{{\mathbb{R}}^n_{+}}\,\big)\big]^{M}.
\end{equation}
\end{lemma}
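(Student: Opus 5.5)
The plan is to show that $u$ and all its derivatives extend continuously up to $t=0$, locally uniformly. Since smoothness up to the boundary is local, it suffices to fix $R>0$ and prove that every derivative $\partial^\gamma u$, $\gamma\in{\mathbb{N}}_0^n$, is bounded and uniformly continuous on $\big(B_{n-1}(0',R)\times(0,1)\big)$.

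\emph{Step 1: localize the datum.} Pick $\theta\in{\mathscr{C}}^\infty_c({\mathbb{R}}^{n-1})$ with $\theta\equiv1$ on $B_{n-1}(0',4R)$, and split $\psi=\theta\psi+(1-\theta)\psi=:\psi_1+\psi_2$. Accordingly, $u=u_1+u_2$ with $u_j:=P^L_t\ast\psi_j$.

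\emph{Step 2: the far part $u_2$.} For $|x'|<R$ and $|y'|\geq4R$ one has $|(x',t)-(y',0)|\geq|y'|/2$. The kernel $K^L(x'-y',t)$, together with all its derivatives in $(x',t)$, extends smoothly to $t\in[0,1]$; this follows from Theorem~\ref{ya-T4-fav}, because $K^L\in{\mathscr{C}}^\infty(\overline{{\mathbb{R}}^n_{+}}\setminus B(0,\varepsilon))$. Moreover, by \eqref{eq:KjG} these derivatives are bounded by $C|y'|^{1-n-|\gamma|}$. Since $|\psi_2(y')|\leq C(1+|y'|)^{-N}$ with $N>-1$, the integrand of $\partial^\gamma u_2$ is dominated by $C|y'|^{1-n-N}$, which is integrable at infinity. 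Differentiation under the integral sign and dominated convergence then show that $u_2$ is ${\mathscr{C}}^\infty$ on $B_{n-1}(0',R)\times[0,1]$.

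\emph{Step 3: the compactly supported part $u_1$.} Here $\psi_1\in{\mathscr{C}}^\infty_c$, and we use convolution structure. Tangential derivatives commute with convolution: $\partial_{x'}^{\gamma'}u_1=P^L_t\ast\partial^{\gamma'}\psi_1$. Proposition~\ref{thm:existence}, applied to the bounded continuous datum $\partial^{\gamma'}\psi_1$, gives boundedness, via \eqref{exist:Nu-Mf}, and continuity up to the boundary; the latter is actually uniform, since $P^L_t\ast f\to f$ uniformly for uniformly continuous $f$ when $\int P^L=I$ and $|P^L|$ is integrable.

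For normal derivatives, the system itself converts $\partial_t$-derivatives into tangential ones. Strong ellipticity gives that $A:=(a^{\alpha\beta}_{nn})$ is invertible, as seen from \eqref{L-ell.X} with $\xi=e_n$. Hence $Lu=0$ yields
\[
\partial_t^2u=-A^{-1}\sum_{(r,s)\neq(n,n)}a_{rs}\partial_r\partial_su,
\]
so every $\partial_t^k$, $k\geq2$, reduces to tangential derivatives of $u$ and $\partial_tu$. The remaining issue is therefore to control $\partial_tu_1$ together with all of its tangential derivatives.

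\emph{The main obstacle: $\partial_tu_1$.} First I would try Proposition~\ref{c1.2}. Since $u_1$ is bounded and has trace $\psi_1$, which is smooth, \eqref{eq1.18} bounds $\nabla^k u_1$ near the boundary for all $k$. To apply it, one needs $u_1\in W^{1,2}_{\rm loc}$ up to the boundary. I would verify this by the same argument applied to the shifted functions $u_1(\cdot+\varepsilon e_n)$, whose traces $P^L_\varepsilon\ast\psi_1$ have ${\mathscr{C}}^{k+1}$ norms bounded uniformly in $\varepsilon$. The resulting $\varepsilon$-uniform bounds on all derivatives near the boundary give, by Arzel\`a--Ascoli, equicontinuity and hence continuous extension of every derivative to $t=0$.

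Should the $W^{1,2}$ hypothesis be awkward to check, I would fall back on a direct computation. Write
\[
\partial_tu_1=(\partial_tK^L)(\cdot,t)\ast\psi_1,
\]
and use that $\int\partial_tK^L(x',t)\,dx'=\partial_t I=0$. Then
\[
\partial_tu_1(x',t)=\int\partial_tK^L(x'-y',t)\big[\psi_1(y')-\psi_1(x')-\nabla\psi_1(x')\cdot(y'-x')\big]\,dy'
\]
(the first-moment term vanishes, or is handled via the homogeneity of $K^L$). The bracket is $O(|x'-y'|^2)$ and $|\partial_tK^L|\leq C|x|^{-n}$, so this integral converges uniformly as $t\to0$. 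Combining Steps 2 and 3 gives \eqref{Lnabv-ufwhJ}.
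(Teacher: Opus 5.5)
Your overall route works and differs genuinely from the paper's, but Step~2 has a concrete error, and your fallback for $\partial_t u_1$ is invalid as written.

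\textbf{The gap in Step~2.} For $\gamma=0$, the bound $|K^L(x'-y',t)|\leq C|y'|^{1-n}$ from \eqref{eq:KjG} gives the majorant $C|y'|^{1-n-N}$. But $\int_{|y'|>1}|y'|^{1-n-N}\,dy'$ over ${\mathbb{R}}^{n-1}$ is finite only when $N>0$, whereas the lemma allows $N\in(-1,0]$. For $|\gamma|\geq 1$ the uncoarsened majorant $C|y'|^{1-n-|\gamma|-N}$ is integrable, precisely because $N>-1$. Two repairs are available:
\begin{itemize}
\item Use the sharper size bound from item (a) of Definition~\ref{defi:Poisson}, $|K^L(x',t)|\leq Ct\,(|x'|^2+t^2)^{-n/2}$. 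This gives the integrable majorant $C|y'|^{-n-N}$ for $t\in(0,1]$, and also shows $u_2(\cdot,0)=0$ on $B_{n-1}(0',R)$. This is exactly where the threshold $N>-1$ enters.
\item Alternatively, note that $u_2$ is finite by Proposition~\ref{thm:existence} and has bounded gradient on the convex set $B_{n-1}(0',R)\times(0,1)$, hence is Lipschitz there.
\end{itemize}

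\textbf{The fallback for $\partial_tu_1$.} Since $\psi_1$ has compact support, the bracket $\psi_1(y')-\psi_1(x')-\nabla\psi_1(x')\cdot(y'-x')$ grows linearly in $|y'|$. Against $|\partial_tK^L|\approx|y'|^{-n}$ the integral then diverges logarithmically in ${\mathbb{R}}^{n-1}$. The first moment $\int\partial_tK^L(z',t)\,z'\,dz'$ is not absolutely convergent either, already for the Laplacian. You do not need this: your Proposition~\ref{c1.2} argument controls all derivatives of $u_1$ at once, which also makes the reduction via $A^{-1}$ superfluous.

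\textbf{Comparison with the paper.} With the Step~2 repair, your proof is correct.
\begin{itemize}
\item The paper does not split $\psi$. It uses the Agmon--Douglis--Nirenberg structure theorem $K^L=\Delta_{x'}^{(n+q)/2}K_q$, with $K_q\in{\mathscr{C}}^{q-1}(\overline{{\mathbb{R}}^n_{+}})$ of controlled growth. It integrates by parts to move all tangential derivatives onto $\psi$; this is where the decay of all derivatives in \eqref{TFFV-b54EE.1} is needed, to beat the growth of $K_q$. It then differentiates under the integral sign, obtaining explicit formulas for $\partial^\alpha u$ that extend continuously to $t=0$. A Stein-extension and partition-of-unity step finishes.
\item You instead localize. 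The far part is handled by the smoothness and decay of $K^L$ off the origin. The near part is handled by the a priori boundary estimates of Proposition~\ref{c1.2}.
\item Your device of applying Proposition~\ref{c1.2} to $u_1(\cdot+\varepsilon e_n)$ is neat. The traces $P^L_\varepsilon\ast\psi_1$ have ${\mathscr{C}}^{k+1}$ norms bounded by $\|P^L\|_{L^1}\max_{\ell\leq k+1}\|\nabla^\ell\psi_1\|_{L^\infty}$, so the $W^{1,2}$ hypothesis is met with $\varepsilon$-uniform constants.
\end{itemize}

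\textbf{What each route buys.} Your argument uses only the smoothness of $\psi$ and the size bound $|\psi(z')|\leq C(1+|z'|)^{-N}$, so it proves a slightly stronger statement. The cost is reliance on Proposition~\ref{c1.2}, which the paper invokes only later, in Step~4, after this lemma has supplied the $W^{1,2}$ membership. The paper's route, in turn, yields explicit integral formulas for the boundary values of all derivatives. As in the paper, you should add a sentence passing from ``all derivatives extend continuously'' to a genuine ${\mathscr{C}}^\infty$ extension across $\partial{\mathbb{R}}^n_{+}$, if that is the intended meaning of ${\mathscr{C}}^\infty(\overline{{\mathbb{R}}^n_{+}})$.
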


\begin{proof}
Fix $\psi$ as in the statement, and select some scalar-valued function 
$\theta\in{\mathscr{C}}^\infty({\mathbb{R}}^{n-1})$ such that 
\begin{equation}\label{Lnabv-ufwhJ.2}
\theta\equiv 1\,\,\text{ on }\,\,B_{n-1}(0',1)\,\,\text{ and }\,\,
{\rm supp}\,\theta\subseteq B_{n-1}(0',2).
\end{equation}
For each $R\in[1,\infty)$, define $\theta_R:{\mathbb{R}}^{n-1}\to{\mathbb{C}}$ 
by setting $\theta_R(x'):=\theta(x'/R)$ for each $x'\in{\mathbb{R}}^{n-1}$. 
As a consequence,
\begin{equation}\label{Lnabv-ufwhJ.3}
\lim\limits_{R\to\infty}\theta_R(x')=1\,\,\text{ for each }\,\,x'\in{\mathbb{R}}^{n-1}.
\end{equation}
To proceed, for each $R\in[1,\infty)$ introduce $\psi_R:=\theta_R\psi$ and note that
\begin{equation}\label{Lnabv-ufwhJ.3BIs}
\psi_R\in{\mathscr{C}}^\infty_c({\mathbb{R}}^{n-1}).
\end{equation}
In addition, for each $\alpha'\in{\mathbb{N}}_0^{n-1}$ with $|\alpha'|>0$ we have 
\begin{align}\label{Lnabv-ufwhJ.4}
\big(\partial^{\alpha'}\psi_R\big)(x')=&\,\big(\partial^{\alpha'}\psi\big)(x')\theta_R(x')
\\[4pt]
&\,+\sum_{\stackrel{\beta'+\gamma'=\alpha'}{|\beta'|>0}}
\frac{\alpha'!}{\beta'!\gamma'!}R^{-|\beta'|}(\partial^{\beta'}\theta)(x'/R)
(\partial^{\gamma'}\psi)(x'),\qquad\forall\,x'\in{\mathbb{R}}^{n-1}.
\nonumber
\end{align}
In concert with \eqref{Lnabv-ufwhJ.3} this readily implies that for each $\alpha\,'\in{\mathbb{N}}_0^{n-1}$,
\begin{equation}\label{Lnabv-ufwhJ.5}
\lim_{R\to\infty}\big(\partial^{\alpha\,'}\psi_R\big)(x')
=\big(\partial^{\alpha\,'}\psi\big)(x'),\qquad\forall\,x'\in{\mathbb{R}}^{n-1}.
\end{equation}
Next, observe that $R\leq |x'|\leq 2R$ whenever $x'\in{\mathbb{R}}^{n-1}$ and 
$\beta'\in{\mathbb{N}}_0^{n-1}$ are such that $|\beta'|>0$ and 
$(\partial^{\beta'}\theta)(x'/R)\not=0$. Based on this, \eqref{Lnabv-ufwhJ.4}, 
\eqref{TFFV-b54EE.1}, and the properties of $\theta$, we deduce that 
the following decay condition holds:
\begin{equation}\label{Lnabv-ufwhJ.6}
\begin{array}{c}
\text{$\forall\,\alpha\,'\in{\mathbb{N}}_0^{n-1}$ there 
exists $C_{\theta,\alpha\,'}\in(0,\infty)$ so that for each $R\in[1,\infty)$}
\\[6pt]
\big|\big(\partial^{\alpha\,'}\psi_R\big)(x')\big|
\leq C_{\theta,\alpha\,'}\big[|x'|+1\big]^{-N-|\alpha\,'|},
\qquad\forall\,x'\in{\mathbb{R}}^{n-1}.
\end{array}
\end{equation}

Switching gears, we shall need a structure result, proved in 
\cite[pp.\,56--57, and Lemma~4.1 on p.\,58]{ADNII} 
(cf. also \cite[pp.\,636--637]{ADNI}), to the effect that for each 
\begin{equation}\label{ha-GVVC-v6bu}
\text{$q\in{\mathbb{N}}$ with the same parity as $n$} 
\end{equation}
there exists a ${\mathbb{C}}^{M\times M}$-valued function
\begin{equation}\label{Lnabv-ufwhJ.7}
K_q\in\big[{\mathscr{C}}^{q-1}\big(\overline{{\mathbb{R}}^n_{+}}\,\big)
\cap{\mathscr{C}}^{\infty}\big(\overline{{\mathbb{R}}^n_{+}}
\setminus B(0,\varepsilon)\big)\big]^{M\times M},\qquad\forall\,\varepsilon>0
\end{equation}
(in fact, $K_q\in\big[{\mathscr{C}}^{q}\big(\overline{{\mathbb{R}}^n_{+}}\,\big)\big]^{M\times M}$ 
if $n\geq 3$), with the property that for each $\alpha\in{\mathbb{N}}_0^n$ 
there exists $C_\alpha\in(0,\infty)$ for which 
\begin{equation}\label{Lnabv-ufwhJ.8}
\big|\big(\partial^\alpha K_q\big)(x)\big|\leq C_\alpha |x|^{q-|\alpha|}
\big(1+\big|{\rm ln}\,|x|\big|\big),\qquad\forall\,x\in{\mathbb{R}}^n_{+},
\end{equation}
(moreover, if $|\alpha|\geq q+1$ then $\partial^\alpha K_q$ is positive homogeneous of degree $q-|\alpha|$ in ${\mathbb{R}}^n_{+}$ 
and the logarithmic term in \eqref{Lnabv-ufwhJ.8} may be omitted), and such that 
\begin{equation}\label{Lnabv-ufwhJ.9}
K^L(x',t)=\Delta_{x'}^{(n+q)/2}\big[K_q(x',t)\big],\qquad\forall\,(x',t)\in{\mathbb{R}}^n_{+},
\end{equation}
where $\Delta_{x'}$ denotes the $(n-1)$-dimensional Laplacian in the variable $x'\in{\mathbb{R}}^{n-1}$.

Granted this, if $u$ is defined as in \eqref{eq1.6Adaf.Rd} for some 
${\mathbb{C}}^{M}$-valued function $\psi\in{\mathscr{C}}^\infty({\mathbb{R}}^{n-1})$ 
satisfying \eqref{TFFV-b54EE.1}, then for every $q$ as in \eqref{ha-GVVC-v6bu}
and for each fixed $(x',t)\in{\mathbb{R}}^n_{+}$ we may write 
\begin{align}\label{Lnabv-ufwhJ.10}
u(x',t) &=\int_{{\mathbb{R}}^{n-1}}K^L(x'-y',t)\psi(y')\,dy'
=\int_{{\mathbb{R}}^{n-1}}\Delta_{x'}^{(n+q)/2}\big[K_q(x'-y',t)\big]\psi(y')\,dy'
\nonumber\\[4pt]
&=\int_{{\mathbb{R}}^{n-1}}\Delta_{y'}^{(n+q)/2}\big[K_q(x'-y',t)\big]\psi(y')\,dy'
\nonumber\\[4pt]
&=\lim_{R\to\infty}\int_{{\mathbb{R}}^{n-1}}
\Delta_{y'}^{(n+q)/2}\big[K_q(x'-y',t)\big]\psi_R(y')\,dy'
\nonumber\\[4pt]
&=\lim_{R\to\infty}\int_{{\mathbb{R}}^{n-1}}
K_q(x'-y',t)\Delta_{y'}^{(n+q)/2}[\psi_R(y')]\,dy'
\nonumber\\[4pt]
&=\int_{{\mathbb{R}}^{n-1}}K_q(x'-y',t)\Delta_{y'}^{(n+q)/2}[\psi(y')]\,dy',
\end{align}
by \eqref{Lnabv-ufwhJ.9}, Lebesgue's Dominated Convergence Theorem (which,
in turn, is based on \eqref{Lnabv-ufwhJ.5}-\eqref{Lnabv-ufwhJ.6} 
and \eqref{Lnabv-ufwhJ.8}) used twice, and integration by parts 
(which relies on \eqref{Lnabv-ufwhJ.3BIs} and \eqref{Lnabv-ufwhJ.7}).
Thus, for any $q$ as in \eqref{ha-GVVC-v6bu}, we have
\begin{align}\label{Lnabv-ufwhJ.11}
u(x',t)=\int_{{\mathbb{R}}^{n-1}}K_q(x'-y',t)\Delta_{y'}^{(n+q)/2}[\psi(y')]\,dy',
\qquad\forall\,(x',t)\in{\mathbb{R}}^n_{+}.
\end{align}
Differentiating under the integral sign in \eqref{Lnabv-ufwhJ.11} 
(using \eqref{Lnabv-ufwhJ.8}, \eqref{TFFV-b54EE.1}, and Lebesgue's Dominated Convergence Theorem)
then shows that at each point $(x',t)\in{\mathbb{R}}^n_{+}$ we have, for every $q$ as in \eqref{ha-GVVC-v6bu},  
\begin{equation}\label{Lnabv-ufwhJ.12}
(\partial^\alpha u)(x',t)=\int_{{\mathbb{R}}^{n-1}}(\partial^\alpha K_q)(x'-y',t)\Delta_{y'}^{(n+q)/2}[\psi(y')]\,dy',
\qquad\forall\,\alpha\in{\mathbb{N}}_0^n. 
\end{equation}
This proves that $u\in\big[{\mathscr{C}}^\infty({\mathbb{R}}^n_{+})\big]^M$, with a useful 
accompanying integral representation formula for derivatives.

As far as the full force of the claim in \eqref{Lnabv-ufwhJ} is concerned, 
in a first stage we shall show that
\begin{equation}\label{NnvD4rfDS}
\partial^\alpha u\text{ extends continuously to }\,\,\overline{{\mathbb{R}}^n_{+}}
\,\,\text{ for each }\,\,\alpha\in{\mathbb{N}}_0^n.
\end{equation}
With this goal in mind, fix an arbitrary $\alpha\in{\mathbb{N}}_0^n$ and consider 
the representation of $\partial^\alpha u$ in ${\mathbb{R}}^n_{+}$ given 
by \eqref{Lnabv-ufwhJ.12} in which we now make the additional assumption that 
the parameter $q$ satisfies $q\geq|\alpha|+1$. In particular, this ensures that
\begin{equation}\label{Jfg-vSkdKV}
\partial^\alpha K_q\in\big[{\mathscr{C}}^{0}\big(\overline{{\mathbb{R}}^n_{+}}\,\big)\big]^{M\times M}
\end{equation}
by \eqref{Lnabv-ufwhJ.7}. Consider the issue of the existence of the limit 
\begin{equation}\label{LnaGV-rE}
\lim\limits_{{\mathbb{R}}^n_{+}\ni (x',\,t)\to (z',0)}\int_{{\mathbb{R}}^{n-1}}
(\partial^\alpha K_q)(x'-y',t)\Delta_{y'}^{(n+q)/2}[\psi(y')]\,dy',
\end{equation}
where $z'\in{\mathbb{R}}^{n-1}$ is an arbitrary fixed point. In this regard, observe that 
\eqref{Lnabv-ufwhJ.8} implies that for each $\varepsilon\in(0,N+1)$ there exists a finite constant 
$C=C(z',\varepsilon)>0$ with the property that for each $y'\in{\mathbb{R}}^{n-1}$ we have
\begin{equation}\label{IFsVNi}
\sup\limits_{(x',\,t)\in B((z',0),1)\cap{\mathbb{R}}^n_{+}}\big|(\partial^\alpha K_q)(x'-y',t)\big|
\leq C\big[|y'|+1\big]^{q-|\alpha|+\varepsilon}.
\end{equation}
Collectively \eqref{Jfg-vSkdKV}, \eqref{IFsVNi}, \eqref{TFFV-b54EE.1}, and 
Lebesgue's Dominated Convergence Theorem give that, on the one hand,  
\begin{align}\label{LnaGV-rE-B}
&\lim\limits_{{\mathbb{R}}^n_{+}\ni (x',\,t)\to (z',0)}\int_{{\mathbb{R}}^{n-1}}
(\partial^\alpha K_q)(x'-y',t)\Delta_{y'}^{(n+q)/2}[\psi(y')]\,dy'
\nonumber\\[6pt]
&\hskip 1.00in
=\int_{{\mathbb{R}}^{n-1}}(\partial^\alpha K_q)(z'-y',0)\Delta_{y'}^{(n+q)/2}[\psi(y')]\,dy',
\end{align}
where the last integral is absolutely convergent.
This analysis proves that given any $\alpha\in{\mathbb{N}}_0^n$ the limit
$\lim\limits_{{\mathbb{R}}^n_{+}\ni (x',\,t)\to (z',0)}(\partial^\alpha u)(x',t)$ 
exists and is finite for each $z'\in{\mathbb{R}}^{n-1}$. Moreover, the same 
type of analysis may also be employed to show that the function 
\begin{equation}\label{Lnahgfs-ii}
\Psi(z'):=\int_{{\mathbb{R}}^{n-1}}(\partial^\alpha K_q)(z'-y',0)
\Delta_{y'}^{(n+q)/2}[\psi(y')]\,dy',\qquad z'\in{\mathbb{R}}^{n-1},
\end{equation}
is continuous on ${\mathbb{R}}^{n-1}$. Thus, \eqref{NnvD4rfDS} is established.

The end-game in the proof of the lemma is the justification of the fact 
that \eqref{NnvD4rfDS} implies \eqref{Lnabv-ufwhJ}. Specifically, from 
\eqref{NnvD4rfDS} we deduce that for each $z'\in{\mathbb{R}}^{n-1}$ and $r>0$, 
the restriction of $u$ to $B^{+}_r(z'):=B\big((z',0),r\big)\cap{\mathbb{R}}^n_{+}$
belongs to the Sobolev space $\big[W^{k,p}\big(B^{+}_r(z')\big)\big]^M$ for every exponent
$p\in(1,\infty)$ and $k\in{\mathbb{N}}$. Using, e.g., Stein's universal extension 
operator as well as standard Sobolev embedding results, it is therefore possible 
to find $U_{z',r}\in\big[{\mathscr{C}}^\infty({\mathbb{R}}^n)\big]^M$ with the property that
$U_{z',r}\big|_{B^{+}_r(z')}=u\big|_{B^{+}_r(z')}$. Finally, gluing together such 
local extensions via a smooth partition of unity yields a ${\mathscr{C}}^\infty$ 
extension of $u$ to a neighborhood of $\overline{{\mathbb{R}}^n_{+}}$. 
\end{proof}

\section{Proofs of the Main Results}
\setcounter{equation}{0}
\label{S-3}

First, we take on the task of presenting the proof of Theorem~\ref{ta.av-GGG.2A}.

\vskip 0.08in
\begin{proof}[Proof of Theorem~\ref{ta.av-GGG.2A}]
Recall from Theorem~\ref{ya-T4-fav} that $L$ has a Poisson kernel $P^L$ 
(in the sense of Definition~\ref{defi:Poisson}) which satisfies by $P^L(x')=K^L(x',1)$ 
for each $x'\in{\mathbb{R}}^{n-1}$ (cf. item {\it (ii)} in Remark~\ref{Ryf-uyf}).
In particular (cf. item {\it (c)} in Definition~\ref{defi:Poisson}), 
\begin{equation}\label{tarad-tre445}
P^L_t(x'-z')=K^L(x'-z',t),\qquad\forall\,x',z'\in{\mathbb{R}}^{n-1},\quad\forall\,t>0.
\end{equation}
Throughout the proof, given an arbitrary point $y=(y_1,\dots,y_n)\in\mathbb{R}^n_+$
we shall denote by $\overline{y}$ its reflection across $\partial\mathbb{R}^n_+$, i.e., 
$\overline{y}=(y_1,\dots,y_{n-1},-y_n)\in\mathbb{R}^n_{-}:=\mathbb{R}^n\setminus\overline{\mathbb{R}^n_{+}}$. 
Also, $E_{L}$ will denote the fundamental solution for $L$ from Theorem~\ref{FS-prop}.

The strategy is to use the Poisson kernel $P^L$ in order to construct a 
particular Green function, which is seen to enjoy considerably stronger 
properties than those stipulated in Definition~\ref{ta.av-GGG}. In turn, 
this special Green function will be employed to show that there is  
precisely one Green function in the sense of Definition~\ref{ta.av-GGG}.
This is accomplished in a series of steps, starting with:

\vskip 0.08in
\noindent{\tt Step~1:} \textit{For each $y\in{\mathbb{R}}^n_{+}$ and each 
$x=(x',t)\in{\mathbb{R}}^n_{+}\setminus\{y\}$ define 
\begin{equation}\label{Ihab-Ygab673}
G^L(x,y):=E^L(x-y)-E^L(x-\overline{y}\,)
-P^L_t\ast\Big(\big[E^L(\cdot-y)-E^L(\cdot-\overline{y}\,)
\big]\big|_{\partial{\mathbb{R}}^n_{+}}\Big)(x'),
\end{equation}
where the convolution with $P^L_t$ is applied to each column of the matrix inside the round parentheses. 
Then $G^L(\cdot,\cdot):{\mathbb{R}}^n_{+}\times{\mathbb{R}}^n_{+}\setminus{\rm diag}\to{\mathbb{C}}^{M\times M}$ 
is a Green function for $L$ in $\mathbb{R}^n_+$ {\rm (}in the sense of Definition~\ref{ta.av-GGG}{\rm )}
which also satisfies 
\begin{equation}\label{GHCewd-22.RRe.2}
G^L(\cdot\,,y)\in\big[W^{1,1}_{\rm loc}({\mathbb{R}}^n_{+})\big]^{M\times M}
\,\,\text{ for each }\,\,y\in{\mathbb{R}}^n_{+},
\end{equation}
and
\begin{equation}\label{hBb-TR.D2}
\begin{array}{c}
\text{for each point $y\in{\mathbb{R}}^n_{+}$, each compact $K\subset{\mathbb{R}}^n_{+}$ 
with $y\in\mathring{K}$}, 
\\[6pt]
\text{and each $\kappa>0$, there exists a constant $C_{y,K}\in(0,\infty)$ such that}
\\[10pt]
\displaystyle
\big(\mathcal{N}^{^{\,{\mathbb{R}}^n_{+}\setminus K}}_\kappa G^L(\cdot,y)\big)(x')\leq
C_{y,K}\Big(\frac{1+\log_{+}|x'|}{1+|x'|^{n-1}}\Big)\,\,\text{ at each }\,\,x'\in{\mathbb{R}}^{n-1}.
\end{array}
\end{equation}
}
\vskip 0.08in

To get started, fix some $\kappa>0$ along with a point $x^\star\in\mathbb{R}^n_{+}$, and consider the compact set 
\begin{equation}\label{GHCewd-abc}
K_\star:=\overline{B(x^\star,r)}\subset{\mathbb{R}}^n_{+}\quad\text{ where }\,\,
r:=\tfrac14\,{\rm dist}\,\big(x^\star,\partial{\mathbb{R}}^n_{+}\big).
\end{equation}
We then claim that for any $N\geq 0$ there holds 
\begin{equation}\label{GHCewd-12}
\sup_{y\in\Gamma_\kappa(x')\setminus K_\star}\Big[|y-x^\star|^{-N}\Big]\approx
\big|(x',0)-x^\star\big|^{-N},\,\,\text{ uniformly for }\,\,x'\in{\mathbb{R}}^{n-1}.
\end{equation}
To justify \eqref{GHCewd-12} fix $x'\in{\mathbb{R}}^{n-1}$ and note that, 
in one direction, 
\begin{equation}\label{GHCewd-13}
\sup_{y\in\Gamma_\kappa(x')\setminus K_\star}\Big[|y-x^\star|^{-N}\Big]
\geq\lim_{\Gamma_\kappa(x')\setminus K_\star\ni y\to(x',0)}\Big[|y-x^\star|^{-N}\Big]
=\big|(x',0)-x^\star\big|^{-N}.
\end{equation}
In the opposite direction, consider first the case when 
\begin{equation}\label{GHCewd-UUU}
\big|x'-(x^\star)'\big|>8\big(1+(1+\kappa^2)^{1/2}\big)r.
\end{equation}
Given $y=(y',t)\in\Gamma_\kappa(x')$ it follows that $|y'-x'|<\kappa\,t$,
hence 
\begin{equation}\label{GHCewd-14}
\big|(y',t)-(x',0)\big|<(1+\kappa^2)^{1/2}t\leq(1+\kappa^2)^{1/2}
\big|(y',t)-((x^\star)',0)\big|.
\end{equation}
Based on this and \eqref{GHCewd-UUU} we may then write 
\begin{align}\label{GHCewd-15}
|x'-(x^\star)'| &=\big|(x',0)-((x^\star)',0)\big|\leq\big|(x',0)-(y',t)\big|
+\big|(y',t)-((x^\star)',0)\big|
\nonumber\\[4pt]
&\leq\big[1+(1+\kappa^2)^{1/2}\big]\big|(y',t)-((x^\star)',0)\big|
\nonumber\\[4pt]
&\leq\big[1+(1+\kappa^2)^{1/2}\big]\big\{|y-x^\star|+4r\big\}
\nonumber\\[4pt]
&\leq\big[1+(1+\kappa^2)^{1/2}\big]
\Big\{|y-x^\star|+\frac{|x'-(x^\star)'|}{2\big(1+(1+\kappa^2)^{1/2}\big)}\Big\}.
\end{align}
Consequently, 
\begin{equation}\label{GHCewd-16}
|x'-(x^\star)'|\leq 2\big[1+(1+\kappa^2)^{1/2}\big]|y-x^\star|.
\end{equation}
In turn, \eqref{GHCewd-16} allows us to estimate 
\begin{align}\label{GHCewd-17}
\big|(x',0)-x^\star\big| &\leq\big|x'-(x^\star)'\big|+(x^\star)_n=\big|x'-(x^\star)'\big|+4r
\nonumber\\[4pt]
&\leq\big|x'-(x^\star)'\big|+\frac{|x'-(x^\star)'|}{2\big(1+(1+\kappa^2)^{1/2}\big)}
\nonumber\\[4pt]
&\leq\big(3+2(1+\kappa^2)^{1/2}\big)|y-x^\star|.
\end{align}
With this in hand we may now conclude that 
\begin{equation}\label{GHCewd-18}
\sup_{y\in\Gamma_\kappa(x')}\Big[|y-x^\star|^{-N}\Big]\leq C_{N,\kappa}
\big|(x',0)-x^\star\big|^{-N}\,\,\text{ for all }\,\,x'\in{\mathbb{R}}^{n-1}
\,\,\text{ as in }\,\,\eqref{GHCewd-UUU}.
\end{equation}

There remains to establish a similar estimate in the case when 
\begin{equation}\label{GHCewd-19}
\big|x'-(x^\star)'\big|\leq 8\big(1+(1+\kappa^2)^{1/2}\big)r.
\end{equation}
To this end, pick an arbitrary $y\in{\mathbb{R}}^n_{+}\setminus K_\star$ and
note that this forces $|y-x^\star|>r$. On the other hand, 
\begin{equation}\label{GHCewd-20}
\big|(x',0)-x^\star\big|\leq\big|x'-(x^\star)'\big|+(x^\star)_n
\leq 4\big(3+2(1+\kappa^2)^{1/2}\big)r,
\end{equation}
hence $\big|(x',0)-x^\star\big|\leq 4\big(3+2(1+\kappa^2)^{1/2}\big)|y-x^\star|$
which goes to show that
\begin{equation}\label{GHCewd-21}
\sup_{y\in{\mathbb{R}}^n_{+}\setminus K_\star}\Big[|y-x^\star|^{-N}\Big]\leq C_{N,\kappa}
\big|(x',0)-x^\star\big|^{-N}\,\,\text{ for all }\,\,x'\in{\mathbb{R}}^{n-1}
\,\,\text{ as in }\,\,\eqref{GHCewd-19}.
\end{equation}
In concert, \eqref{GHCewd-18} and \eqref{GHCewd-21} prove the left-pointing 
inequality in \eqref{GHCewd-12}. This finishes the proof of \eqref{GHCewd-12}.

In turn, \eqref{GHCewd-12} readily self-improves to an estimate of the following sort. 
\begin{equation}\label{GHCewd-12-best}
\begin{array}{c}
\text{for each point $x^\ast\in{\mathbb{R}}^n_{+}$, each compact $K\subset{\mathbb{R}}^n_{+}$ with $x^\ast\in\mathring{K}$}, 
\\[6pt]
\text{each $N>0$, and each $\kappa>0$, there exists a constant $C=C(x^\ast,K,N,\kappa)\in(0,\infty)$ such that}
\\[10pt]
\displaystyle
\big(\mathcal{N}^{^{\,{\mathbb{R}}^n_{+}\setminus K}}_\kappa\big(|\cdot-x^\ast|^{-N})\big)(x')\leq C(1+|x'|)^{-N}
\,\,\text{ for each }\,\,x'\in{\mathbb{R}}^{n-1}.
\end{array}
\end{equation}

Going further, having fixed any $y\in{\mathbb{R}}^n_{+}$ along with some compact 
set $K\subset{\mathbb{R}}^n_{+}$ such that $y\in{\mathring{K}}$, for any multi-index 
$\gamma\in{\mathbb{N}}^n_0$ we may estimate 
\begin{equation}\label{GHCewd-4}
\big|(\partial^\gamma E^{L})(x-y)-(\partial^\gamma E^L)(x-\overline{y}\,)\big|
\leq\frac{C(n,\gamma,K,y)}{|x-y|^{n-1+|\gamma|}},\qquad\forall\,x\in\overline{{\mathbb{R}}^n_{+}}\setminus K,
\end{equation}
for some finite constant $C(n,\gamma,K)>0$, by the Mean Value Theorem and Theorem~\ref{FS-prop}. 
In particular, if we consider 
\begin{equation}\label{GHCewd-5}
f(x'):=E^{L}\big((x',0)-y\big)-E^{L}\big((x',0)-\overline{y}\,\big),\qquad\forall\,x'\in{\mathbb{R}}^{n-1},
\end{equation}
then $f\in\big[{\mathscr{C}}^\infty({\mathbb{R}}^{n-1})\big]^{M}$ and \eqref{GHCewd-4} allows us 
to conclude that for each $\gamma\,'\in{\mathbb{N}}_0^{n-1}$ we have
\begin{equation}\label{GHCewd-5BBIISS}
|(\partial^{\gamma\,'}f)(x')|\leq\frac{C(y,\gamma\,')}{1+|x'|^{n-1+|\gamma\,'|}}
\,\,\text{ for each }\,\,x'\in{\mathbb{R}}^{n-1}.
\end{equation}
Together with Lemma~\ref{lemma:M-ball}, the version of \eqref{GHCewd-5BBIISS} corresponding 
to $\gamma\,'=(0,\dots,0)$ ensures that 
\begin{equation}\label{GHCewd-5BB-trr}
\big({\mathcal{M}}f\big)(x')\leq C\Big(\frac{1+\log_{+}|x'|}{1+|x'|^{n-1}}\Big)
\,\,\text{ for each }\,\,x'\in{\mathbb{R}}^{n-1}.
\end{equation}

At this stage, return to \eqref{Ihab-Ygab673} and, for each $y\in{\mathbb{R}}^n_{+}$ and 
$x=(x',t)\in{\mathbb{R}}^n_{+}\setminus\{y\}$, express
\begin{equation}\label{Ihab-Ygab673.a}
G^L(x,y)=G^L_1(x,y)+G^L_2(x,y),
\end{equation}
where 
\begin{equation}\label{Ihab-Ygab673.b}
\begin{array}{c}
G^L_1(x,y):=E^L(x-y)-E^L(x-\overline{y}\,)\,\,\text{ and}
\\[6pt]
G^L_2(x,y):=-P^L_t\ast\Big(\big[E^L(\cdot-y)-E^L(\cdot-\overline{y}\,)\big]\big|_{\partial{\mathbb{R}}^n_{+}}\Bigr)(x').
\end{array}
\end{equation}
Then from \eqref{GHCewd-4} and \eqref{GHCewd-12-best} we see that
\begin{equation}\label{hBb-TR.D2.c}
\begin{array}{c}
\text{for each point $y\in{\mathbb{R}}^n_{+}$, each compact $K\subset{\mathbb{R}}^n_{+}$ 
with $y\in\mathring{K}$}, 
\\[6pt]
\text{and each $\kappa>0$, there exists a constant $C_{y,K}\in(0,\infty)$ such that}
\\[10pt]
\displaystyle
\big(\mathcal{N}^{^{\,{\mathbb{R}}^n_{+}\setminus K}}_\kappa G^L_1(\cdot,y)\big)(x')\leq
\frac{C_{y,K}}{1+|x'|^{n-1}}\,\,\text{ for each }\,\,x'\in{\mathbb{R}}^{n-1},
\end{array}
\end{equation}
while from \eqref{exist:Nu-Mf}, \eqref{GHCewd-5}, and \eqref{GHCewd-5BB-trr} we see that
\begin{equation}\label{hBb-TR.D2.d}
\begin{array}{c}
\text{for each point $y\in{\mathbb{R}}^n_{+}$, each compact $K\subset{\mathbb{R}}^n_{+}$ 
with $y\in\mathring{K}$}, 
\\[6pt]
\text{and each $\kappa>0$, there exists a constant $C_{y,K}\in(0,\infty)$ such that}
\\[10pt]
\displaystyle
\big(\mathcal{N}^{^{\,{\mathbb{R}}^n_{+}\setminus K}}_\kappa G^L_2(\cdot,y)\big)(x')\leq
C_{y,K}\Big(\frac{1+\log_{+}|x'|}{1+|x'|^{n-1}}\Big)\,\,\text{ for each }\,\,x'\in{\mathbb{R}}^{n-1}.
\end{array}
\end{equation}
Collectively, \eqref{hBb-TR.D2.c} and \eqref{hBb-TR.D2.d} prove \eqref{hBb-TR.D2}.

Pressing on, the membership in \eqref{GHCewd-22.RRe.2} is a consequence of \eqref{Ihab-Ygab673.a}, the fact 
that the function $G^L_2(\cdot,y)$ belongs to $\big[{\mathscr{C}}^\infty({\mathbb{R}}^n_{+})\big]^{M\times M}$ 
(thanks to \eqref{GHCewd-5}-\eqref{GHCewd-5BBIISS} and the first property in \eqref{exist:u2}), 
and Theorem~\ref{FS-prop} (cf. item {\it (4)} in particular). 

To show that $G^L(\cdot,\cdot)$ from \eqref{Ihab-Ygab673} is a Green function for $L$ 
in $\mathbb{R}^n_{+}$ in the sense of Definition~\ref{ta.av-GGG}, observe that \eqref{GHCewd-22.RRe}
is contained in \eqref{GHCewd-22.RRe.2}, while \eqref{GHCewd-24.RRe} follows from 
\eqref{Ihab-Ygab673} and the last property recorded in \eqref{exist:u2}. Also, \eqref{GHCewd-25.RRe}
is implied by \eqref{hBb-TR.D2}, whereas \eqref{GHCewd-23.RRe} is seen from Theorem~\ref{FS-prop}
and \eqref{exist:u2}.

\vskip 0.08in
\noindent{\tt Step~2:} \textit{For each $y\in{\mathbb{R}}^n_{+}$ we have 
$G^L(\cdot\,,y)\in\big[{\mathscr{C}}^\infty\big(\overline{{\mathbb{R}}^n_{+}}
\setminus B(y,\varepsilon)\big)\big]^{M\times M}$ for each $\varepsilon>0$.}

\vskip 0.08in
This is a direct consequence of Lemma~\ref{UYfa-uVdssk} 
(keeping in mind \eqref{tarad-tre445}) and the fact that, as seen 
from \eqref{GHCewd-4}, for each fixed $y\in{\mathbb{R}}^n_{+}$ the function 
$\psi:{\mathbb{R}}^{n-1}\to{\mathbb{C}}^{M\times M}$ given by 
\begin{equation}\label{GHCewd-4EWs}
\psi(z'):=E^L\big((z',0)-y\big)-E^L\big((z',0)-\overline{y}\,\big),
\qquad\forall\,z'\in{\mathbb{R}}^{n-1},
\end{equation}
is smooth and satisfies \eqref{TFFV-b54EE.1} with $N:=n-1$ (see \eqref{GHCewd-5}-\eqref{GHCewd-5BBIISS}).

\vskip 0.08in
\noindent{\tt Step~3:} \textit{If $R_L(x,y):=E^L(x-y)-G^L(x,y)$ for each 
$x,y\in{\mathbb{R}}^n_{+}$ with $x\not=y$, then this extends to a function 
\begin{equation}\label{Rj-CINF}
R_L(\cdot,\cdot)\in\big[{\mathscr{C}}^\infty\big({\mathbb{R}}^n_{+}\times{\mathbb{R}}^n_{+}\big)\big]^{M\times M},
\end{equation}
which satisfies 
\begin{equation}\label{Rj}
|R_L(x,y)|\leq C\,|x-{\overline{y}}|^{2-n}
+c_n\big|\!\ln|x-\overline{y}|\big|,
\qquad\forall\,(x,y)\in{\mathbb{R}}^n_{+}\times{\mathbb{R}}^n_{+},
\end{equation}
where $c_n=0$ if $n\geq 3$.
}
\vskip 0.08in

To justify this claim, use \eqref{Ihab-Ygab673} in order to decompose 
\begin{equation}\label{GHCkn.Re.1A}
R_L(x,y)= R^{(1)}_L(x,y)+R^{(2)}_L(x,y),
\end{equation}
where, for each $x=(x',x_n)\in{\mathbb{R}}^n_{+}$ and each 
$y=(y',y_n)\in{\mathbb{R}}^n_{+}\setminus\{x\}$ we have set
\begin{equation}\label{GHCkn.Re.1B}
R^{(1)}_L(x,y):=E^L(x-\overline{y}\,),
\end{equation}
and 
\begin{align}\label{GHCkn.Re.1C}
R^{(2)}_L(x,y):=&\,P^L_{x_n}\ast\Big(\big[E^L(\cdot-y)-E^L(\cdot-\overline{y}\,)\big]
\big|_{\partial{\mathbb{R}}^n_{+}}\Bigr)(x')
\nonumber\\[4pt]
=& \int_{{\mathbb{R}}^{n-1}}P^L_{x_n}(x'-z')\big[E^L\big((z',0)-y\big)
-E^L\big((z',0)-\overline{y}\big)\big]\,dz'.
\end{align}
It is then clear from \eqref{GHCkn.Re.1B} and the first condition in \eqref{smmth-odd} that 
the function $R^{(1)}_L(\cdot,\cdot)$ belongs to 
$\big[{\mathscr{C}}^\infty\big({\mathbb{R}}^n_{+}\times{\mathbb{R}}^n_{+}\big)\big]^{M\times M}$, while 
\eqref{GHCewd-5}-\eqref{GHCewd-5BBIISS} and the first property in \eqref{exist:u2} imply that $R^{(2)}_L(\cdot,\cdot)$ 
also belongs to $\big[{\mathscr{C}}^\infty\big({\mathbb{R}}^n_{+}\times{\mathbb{R}}^n_{+}\big)\big]^{M\times M}$. 
As such, \eqref{Rj-CINF} follows, in light of \eqref{GHCkn.Re.1A}. 

Moving on, from part {\it (3)} in Theorem~\ref{FS-prop} we know that 
\begin{equation}\label{fs-strUUbV}
E^L(z)=\Phi(z)+{\mathfrak{C}}_n\ln|z|,\qquad\forall\,z\in\mathbb{R}^n\setminus\{0\},
\end{equation}
where $\Phi=\big(\Phi_{\alpha\beta}\big)_{1\leq\alpha,\beta\leq M}
\in\big[{\mathscr{C}}^\infty(\mathbb{R}^n\setminus\{0\})\big]^{M\times M}$ is a function 
which is positive homogeneous of degree $2-n$, and ${\mathfrak{C}}_n\in{\mathbb{C}}^{M\times M}$
is a matrix which is identically zero when $n\geq 3$, and when $n=2$ is given by 
\begin{equation}\label{Fm-PjkX}
{\mathfrak{C}}_2=\frac{1}{4\pi^2}\int_{S^{1}}\big[L(\xi)\big]^{-1}\,d{\mathcal{H}}^1(\xi).
\end{equation}
In relation to $R^{(2)}_L(\cdot,\cdot)$ we claim that there exists a finite constant $C>0$ with the property that
\begin{equation}\label{Rj-222}
\big|R^{(2)}_L(x,y)\big|\leq C\,|x-{\overline{y}}|^{2-n},
\qquad\forall\,(x,y)\in{\mathbb{R}}^n_{+}\times{\mathbb{R}}^n_{+}.
\end{equation}
To see that this is the case, note that $\big|\big((z',0)-y\big)\big|=\big|\big((z',0)-\overline{y}\big)\big|$ 
for each $z'\in{\mathbb{R}}^{n-1}$. Based on this and \eqref{fs-strUUbV} it follows that, on the one hand, 
\begin{align}\label{fs-sJba.1}
\big|E^L\big((z',0)-y\big)-& E^L\big((z',0)-\overline{y}\big)\big|
=\big|\Phi\big((z',0)-y\big)-\Phi\big((z',0)-\overline{y}\big)\big|
\nonumber\\[4pt]
& \leq\frac{2\|\Phi\|_{[L^\infty(S^{n-1})]^{M\times M}}}{\big|\big((z',0)-y\big)\big|^{n-2}},
\qquad\forall\,z'\in{\mathbb{R}}^{n-1}.
\end{align}
On the other hand, part $(a)$ in Definition~\ref{defi:Poisson} guarantees the 
existence of a constant $C\in(0,\infty)$ such that
\begin{equation}\label{GHCkn.Re.2}
|P^L_t(z')|\leq\frac{Ct}{(|z'|^2+t^2)^{\frac{n}{2}}},\qquad\forall\,z'\in\mathbb{R}^{n-1},
\qquad\forall\,t>0.
\end{equation}
Granted \eqref{fs-sJba.1}-\eqref{GHCkn.Re.2}, we may then estimate 
\begin{equation}\label{GBbohg-865gbp}
\big|R^{(2)}_L(x,y)\big|
\leq\int_{{\mathbb{R}}^{n-1}}\big|P^L_{x_n}(x'-z')\big|
\big|E^L\big((z',0)-y\big)-E^L\big((z',0)-\overline{y}\big)\big|\,dz'
\leq\,CU_y(x),\quad
\end{equation}
where for each fixed $y=(y',y_n)\in{\mathbb{R}}^n_{+}$ we have set
\begin{equation}\label{E1-abc}
U_y(x',t):=\frac{2}{\omega_{n-1}}
\int_{{\mathbb{R}}^{n-1}}\frac{t}{(|x'-z'|^2+t^2)^{\frac{n}{2}}}
\frac{1}{(|z'-y'|^2+y_n^2)^{\frac{n-2}{2}}}\,dz'
\end{equation}
for each $(x',t)\in{\mathbb{R}}^n_{+}$. Hence, if $n=2$ then $U_y(x',t)\leq 1$ for all $(x',t)\in{\mathbb{R}}^n_{+}$ 
which, in view of \eqref{GBbohg-865gbp}, yields \eqref{Rj-222} in this case. 
To prove \eqref{Rj-222} when $n\geq 3$, fix $y\in{\mathbb{R}}^n_{+}$ and 
denote by $P^\Delta$ the Poisson kernel for the Laplacian in ${\mathbb{R}}^n_{+}$, i.e., 
\begin{equation}\label{Uah-TTT}
P^{\Delta}(x'):=\frac{2}{\omega_{n-1}}\frac{1}{\big(1+|x'|^2\big)^{\frac{n}{2}}},
\qquad\forall\,x'\in{\mathbb{R}}^{n-1}.
\end{equation}
Then, as seen from \eqref{E1-abc} and \eqref{Uah-TTT}, 
\begin{equation}\label{E1-abc.2}
\begin{array}{c}
U_y(x',t)=\big(P^\Delta_t\ast f_y\big)(x'),
\qquad\forall\,(x',t)\in{\mathbb{R}}^n_{+},
\\[12pt]
\text{where }\,\,f_y(z'):=\big(|z'-y'|^2+y_n^2\big)^{\frac{2-n}{2}},
\quad\forall\,z'\in{\mathbb{R}}^{n-1}.
\end{array}
\end{equation}
Since $f_y\in L^\infty({\mathbb{R}}^{n-1})$ it follows from Proposition~\ref{thm:existence} 
(used for $L:=\Delta$) and \eqref{E1-abc.2} that (for each fixed $\kappa>0$) we have 
\begin{equation}\label{E1-abc.2BB}
\Delta U_y=0\,\,\text{ in }\,\,{\mathbb{R}}^n_{+},\quad U_y\in L^\infty({\mathbb{R}}^n_{+}),\quad
U_y\big|^{{}^{\kappa-{\rm n.t.}}}_{\partial{\mathbb{R}}^n_{+}}=f_y\,\,\text{ a.e. in }\,\,{\mathbb{R}}^{n-1}.
\end{equation}
Then the function given by 
\begin{equation}\label{E1-abc.3}
u(x',t):=U_y(x',t)-|x-\overline{y}|^{2-n},\qquad\forall\,x=(x',t)\in{\mathbb{R}}^n_{+},
\end{equation}
is harmonic in ${\mathbb{R}}^n_{+}$, and for a.e. $x'\in{\mathbb{R}}^{n-1}$ satisfies
\begin{align}\label{E1-abc.4}
\Big(u\Big|^{{}^{\kappa-{\rm n.t.}}}_{\partial{\mathbb{R}}^n_{+}}\Big)(x')
=& f_y(x')-\big|(x',0)-\overline{y}\big|^{2-n}
\nonumber\\[4pt]
=& \big(|x'-y'|^2+y_n^2\big)^{\frac{2-n}{2}}-\big(|x'-y'|^2+y_n^2\big)^{\frac{2-n}{2}}=0,
\end{align}
thanks to \eqref{E1-abc.3}, \eqref{E1-abc.2BB}, and \eqref{E1-abc.2}. 
Since from \eqref{E1-abc.3} and \eqref{E1-abc.2BB} we also have that 
\begin{equation}\label{E1-abc.5}
u\in L^\infty({\mathbb{R}}^n_{+}),
\end{equation}
we may conclude from the above analysis and \cite[Proposition~1, p.\,199]{St70} 
(also bearing in mind the last property in \eqref{exist:u2}) that $u\equiv 0$ in ${\mathbb{R}}^n_{+}$. 
Upon recalling \eqref{E1-abc.3}, this further implies that
\begin{equation}\label{E1-abc.6}
U_y(x',t)=|x-\overline{y}|^{2-n},\qquad\forall\,x=(x',t)\in{\mathbb{R}}^n_{+},
\end{equation}
and \eqref{Rj-222} now follows from \eqref{GBbohg-865gbp} and \eqref{E1-abc.6}.

To finish the proof of \eqref{Rj}, observe that by \eqref{GHCkn.Re.1B} 
and \eqref{fs-strUUbV} we also have 
\begin{equation}\label{nJBB-iy5}
\big|R^{(1)}_L(x,y)\big|=
\big|E^L(x-\overline{y}\,)\big|\leq\|\Phi\|_{L^\infty(S^{n-1})}
|x-{\overline{y}}|^{2-n}+\big|{\mathfrak{C}}_n\big|\,\big|\!\ln|x-\overline{y}|\big|.
\end{equation}
Now \eqref{Rj} follows from \eqref{GHCkn.Re.1A}, \eqref{Rj-222}, and \eqref{nJBB-iy5}.

\vskip 0.08in
\noindent{\tt Step~4:} \textit{If $R_L(\cdot,\cdot)$ is as in Step~3, then 
\begin{equation}\label{Pavr-6T4}
\begin{array}{c}
\text{for each }\,\alpha\in\mathbb{N}_0^n\,\text{ with }\,
|\alpha|>0\,\text{ there exists }\,C_{\alpha}\in(0,\infty)\,\text{ such that}
\\[4pt]
\big|\big(\partial^\alpha_X R_L\big)(x,y)\big|
\leq C_\alpha|x-\overline{y}|^{2-n-|\alpha|}\,\,\text{ for each }\,\,
(x,y)\in{\mathbb{R}}^n_{+}\times{\mathbb{R}}^n_{+}.
\end{array}
\end{equation}
}
\vskip -0.05in

To obtain pointwise estimates for derivatives in the first set of variables
for the function $R_L(\cdot,\cdot)$, we shall rely on the local estimates near 
the boundary from Proposition~\ref{c1.2}. To set the stage, recall 
$R^{(1)}_L(\cdot,\cdot)$, $R^{(2)}_L(\cdot,\cdot)$ from \eqref{GHCkn.Re.1B}-\eqref{GHCkn.Re.1C}. 
Also, fix an arbitrary point $y\in{\mathbb{R}}^n_{+}$ and observe that, thanks to 
\eqref{GHCewd-5BBIISS} and Lemma~\ref{UYfa-uVdssk}, 
\begin{equation}\label{Pavr-6T4.B}
R^{(2)}_L(\cdot,y)\in\big[{\mathscr{C}}^\infty\big(\overline{{\mathbb{R}}^n_{+}}\,\big)\big]^{M\times M}.
\end{equation}
In particular, 
\begin{equation}\label{GHCkn.Re.7}
R^{(2)}_L(\cdot,y)\in\big[W^{1,2}({\mathbb{R}}^n_{+}\cap B(0,R)\big)\big]^{M\times M}\,\,\text{ for each }\,\,R>0.
\end{equation}
In addition, by \eqref{GHCkn.Re.1C} and Proposition~\ref{thm:existence}, we have
\begin{equation}\label{GHCkn.Re.8}
\left\{
\begin{array}{l}
L\big[R^{(2)}_L(\cdot,y)\big]=0\,\,\text{ in }\,\,{\mathbb{R}}^n_{+}\,\,\text{ and}
\\[8pt]
\big[R^{(2)}_L(\cdot,y)\big]\Big|^{{}^{\kappa-{\rm n.t.}}}_{\partial{\mathbb{R}}^n_{+}}
=\big[E^L(\cdot-y)-E^L(\cdot-\overline{y})\big]\Big|_{\partial{\mathbb{R}}^n_{+}}
\,\,\text{ a.e. in }\,\,{\mathbb{R}}^{n-1},
\end{array}
\right.
\end{equation}
for each $\kappa>0$. In fact, thanks to \eqref{Pavr-6T4.B}, the 
last condition above may be reformulated as
\begin{equation}\label{GHCkn.Re.8BIS}
{\rm Tr}\,\big[R^{(2)}_L(\cdot,y)\big]
=\big[E^L(\cdot-y)-E^L(\cdot-\overline{y})\big]\big|_{\partial{\mathbb{R}}^n_{+}}
\in\big[{\mathscr{C}}^\infty({\mathbb{R}}^{n-1})\big]^{M\times M}.
\end{equation}

We claim that  
\begin{equation}\label{eq1.20}
\begin{array}{c}
\text{for each multi-index }\,\alpha\in\mathbb{N}_0^n\,\text{ there exists }
\,C_\alpha\in(0,\infty)\,\text{ such that}
\\[4pt]
\big|\big(\partial^\alpha_X R^{(2)}_L\big)(x,y)\big|
\leq C_\alpha|x-\overline{y}|^{2-n-|\alpha|}\,\,\text{ for each }\,\,
(x,y)\in{\mathbb{R}}^n_{+}\times{\mathbb{R}}^n_{+}.
\end{array}
\end{equation}
To prove this claim assume that two arbitrary points $x,y\in{\mathbb{R}}^n_+$ have been given. 
Introduce the quantity 
\begin{equation}\label{GHCkn.arV}
\rho:=|x-\overline{y}|/5>0 
\end{equation}
and pick some $z\in\overline{{\mathbb{R}}^n_{+}}$ such that $|x-z|=\rho/2$. 
It follows that for any $w\in\overline{\mathbb{R}^n_{+}}\cap B(z,2\rho)$ we have
\begin{align}\label{GHCkn.Re.9}
|x-\overline{y}| &\leq |x-z|+|z-w|+|w-\overline{y}|
\nonumber\\[4pt]
&\leq\rho/2+2\rho+|w-\overline{y}|=|x-\overline{y}|/2+|w-\overline{y}|.
\end{align}
In particular, 
\begin{equation}\label{GHCkn.Re.10}
|x-\overline{y}|/2\leq |w-\overline{y}|=|w-y|\,\,\text{ for every }\,\,
w\in\partial{\mathbb{R}}^n_+\cap B(z,2\rho). 
\end{equation}
Let us also observe that by \eqref{fs-strUUbV}, given any $\ell\in\mathbb{N}_0$, we may write 
\begin{align}\label{fs-sJba.2}
\big|\nabla^\ell_{\xi'}\big[E^L\big((\xi',0)-y\big)
-& E^L\big((\xi',0)-\overline{y}\big)\big]\big|
\nonumber\\[4pt]
& \leq\big|(\nabla^\ell\Phi)\big((\xi',0)-y\big)\big|
+\big|(\nabla^\ell\Phi)\big((\xi',0)-\overline{y}\big)\big|
\nonumber\\[4pt]
& \leq\frac{2\|\nabla^\ell\Phi\|_{L^\infty(S^{n-1})}}
{\big|\big((\xi',0)-y\big)\big|^{n-2+\ell}},
\qquad\forall\,\xi'\in{\mathbb{R}}^{n-1}.
\end{align}
Based on \eqref{GHCkn.arV} and \eqref{GHCkn.Re.10}-\eqref{fs-sJba.2}, for each $\ell\in\mathbb{N}_0$
we may therefore estimate 
\begin{align}\label{nablaPhi}
\rho^{\ell-1}\cdot\sup_{(\xi',0)\in\partial\mathbb{R}^n_{+}\cap B(z,2\rho)} &
\big|\nabla^\ell_{\xi'}\big[E^L\big((\xi',0)-y\big)
-E^L\big((\xi',0)-\overline{y}\big)\big]\big|
\nonumber\\[4pt]
& \leq\sup_{w\in\partial\mathbb{R}^n_{+}\cap B(z,2\rho)}
\frac{C_\ell|x-\overline{y}|^{\ell-1}}{|w-y|^{n-2+\ell}}
\leq\frac{C_\ell}{|x-\overline{y}|^{n-1}}.
\end{align}
With this in hand, for each $k\in{\mathbb{N}}$ we may write 
(keeping in mind that $x\in\mathbb{R}^n_{+}\cap B(z,\rho)$)
\begin{align}\label{eKna-6YH}
\big|\big(\nabla_X^k & R^{(2)}_L\big)(x,y)\big|\leq 
\,\,\sup_{\mathbb{R}^n_{+}\cap B(z,\rho)}
\big|\big(\nabla_X^k R^{(2)}_L\big)(\cdot,y)\big|
\nonumber\\[4pt]
\leq & \,C\,\rho^{-k}\Big(\sup_{\mathbb{R}^n_{+}\cap B(z,2\rho)}
\big|R^{(2)}_L(\cdot,y)\big|\Big)
\nonumber\\[4pt]
& +\rho^{1-k}\sum_{\ell=0}^{k+1}\rho^{\ell-1}\Big(
\sup_{(\xi',0)\in\partial\mathbb{R}^n_{+}\cap B(z,2\rho)}
\big|\nabla^\ell_{\xi'}\big[E^L\big((\xi',0)-y\big)
-E^L\big((\xi',0)-\overline{y}\big)\big]\big|\Big)
\nonumber\\[4pt]
\leq & \,C\rho^{-k}|x-\overline{y}|^{2-n}+C\rho^{1-k}|x-\overline{y}|^{1-n}
=C|x-\overline{y}|^{2-n-k},
\end{align}
by virtue of \eqref{GHCkn.Re.7}-\eqref{GHCkn.Re.9}, Proposition~\ref{c1.2}, 
\eqref{Rj-222}, and \eqref{nablaPhi}. This establishes \eqref{eq1.20} in the case when 
$\alpha\in{\mathbb{N}}_0^n$ has $|\alpha|>0$. Since the case when $|\alpha|=0$ is already 
contained in \eqref{Rj-222}, the proof of \eqref{eq1.20} is complete. 
As regards $R^{(1)}_L(\cdot,\cdot)$, by combining \eqref{GHCkn.Re.1B} 
and \eqref{fs-est} we see that
\begin{equation}\label{Yjn1.20}
\begin{array}{c}
\text{given }\,\alpha,\beta\in\mathbb{N}_0^n\,\text{ with }\,
|\alpha|+|\beta|>0\,\text{ there exists }\,C_{\alpha\beta}\in(0,\infty)\,\text{ so that}
\\[4pt]
\big|\big(\partial^\alpha_X\partial^\beta_Y R^{(1)}_L\big)(x,y)\big|
\leq C_{\alpha\beta}|x-\overline{y}|^{2-n-|\alpha|-|\beta|}\,\,\text{ for each }\,\,
(x,y)\in{\mathbb{R}}^n_{+}\times{\mathbb{R}}^n_{+}.
\end{array}
\end{equation}
Then \eqref{Pavr-6T4} follows from \eqref{GHCkn.Re.1A}, \eqref{eq1.20}, and
\eqref{Yjn1.20}.

\vskip 0.08in
\noindent{\tt Step~5:} \textit{We have that
\begin{equation}\label{Pavr-6Y4.a1}
\begin{array}{c}
\text{for each point $y\in{\mathbb{R}}^n_{+}$, each compact $K\subset{\mathbb{R}}^n_{+}$ 
with $y\in\mathring{K}$,}
\\[4pt]
\text{and each $\kappa>0$, there exists a constant $C_{y,K}\in(0,\infty)$ such that}
\\[4pt]
\big({\mathcal{N}}_\kappa^{\,\mathbb{R}^{n}_{+}\setminus K}(\nabla_X G^L)(\cdot\,,y)\big)(x')\leq C_{y,K}(1+|x'|)^{1-n}
\,\,\text{ at each }\,\,x'\in{\mathbb{R}}^{n-1}.
\end{array}
\end{equation}
}
\vskip -0.05in

Having fixed two arbitrary points $x,y\in{\mathbb{R}}^n_{+}$ with $x\not=y$, we may estimate 
\begin{align}\label{RBvav-y5p}
\big|(\nabla_X G^L\big)(x,y)\big|\leq & \big|(\nabla_X E^L\big)(x-y)\big|
+\big|(\nabla_X R_L\big)(x,y)\big|
\nonumber\\[4pt]
\leq & C|x-y|^{1-n}+C|x-\overline{y}|^{1-n}\leq C|x-y|^{1-n},
\end{align}
making use of Theorem~\ref{FS-prop}, \eqref{Pavr-6T4}, and the fact that 
\begin{equation}\label{Ojhb}
|x-y|=\big[|x'-y'|^2+|x_n-y_n|^2\big]^{1/2}
\leq\big[|x'-y'|^2+(x_n+y_n)^2\big]^{1/2}=|x-\overline{y}|,
\end{equation}
for each $x=(x',x_n)\in{\mathbb{R}}^n_{+}$ and 
$y=(y',y_n)\in{\mathbb{R}}^n_{+}$. Then \eqref{Pavr-6Y4.a1} is a consequence 
of \eqref{RBvav-y5p} and \eqref{GHCewd-12-best}.

\vskip 0.08in
\noindent{\tt Step~6:} \textit{There exists precisely one Green 
function for $L$ in ${\mathbb{R}}^n_{+}$, in the sense of Definition~\ref{ta.av-GGG}.
}
\vskip 0.08in
From Step~1 we know that $L$ has at least one Green 
function in ${\mathbb{R}}^n_{+}$, in the sense of Definition~\ref{ta.av-GGG}. 
To show that this is unique, suppose $G_1(\cdot,\cdot)$, $G_2(\cdot,\cdot)$, are two such
Green functions and fix an arbitrary point $y\in{\mathbb{R}}^n_{+}$.
If $u:=G_1(\cdot,y)-G_2(\cdot,y)$ in ${\mathbb{R}}^n_{+}\setminus\{y\}$, then 
properties \eqref{GHCewd-22.RRe} and \eqref{GHCewd-23.RRe} imply that 
$u\in\big[L^1_{\rm loc}({\mathbb{R}}^n_{+})\big]^{M\times M}$ and $Lu=0$ in 
$\big[{\mathcal{D}}'({\mathbb{R}}^n_{+})\big]^{M\times M}$. In particular, 
\begin{equation}\label{naIaPa.4}
u\in\big[{\mathscr{C}}^\infty({\mathbb{R}}^n_{+})\big]^{M\times M}
\,\,\text{ and }\,\,Lu=0\,\,\text{ pointwise in }\,\,{\mathbb{R}}^n_{+},
\end{equation}
by elliptic regularity. Furthermore, \eqref{GHCewd-24.RRe} implies that there exists $\kappa>0$ such that 
\begin{equation}\label{naIaPa.5}
u\big|^{{}^{\kappa-{\rm n.t.}}}_{\partial{\mathbb{R}}^n_{+}}=0\,\,\text{ a.e. in }\,\,{\mathbb{R}}^{n-1}.
\end{equation}
In addition, from \eqref{GHCewd-25.RRe} and the first condition in \eqref{naIaPa.4} we deduce that  
\begin{equation}\label{GHReRFv.BNv}
\int_{{\mathbb{R}}^{n-1}}\big({\mathcal{N}}_{\kappa}u\big)(x')\frac{dx'}{1+|x'|^{n-1}}<+\infty.
\end{equation}

To proceed, we make the claim that 
\begin{equation}\label{GHUNNDs}
\text{any function $u$ satisfying \eqref{naIaPa.4}-\eqref{GHReRFv.BNv} vanishes
identically in }\,\,{\mathbb{R}}^n_{+}.
\end{equation}
To justify this claim, let 
$G^{L^\top}(\cdot,\cdot)=\big(G^{L^\top}_{\alpha\gamma}(\cdot,\cdot)\big)_{1\leq\alpha,\gamma\leq M}$ 
be the Green function constructed as in Step~1 in relation to the operator $L^\top$, 
and fix an arbitrary point $y\in{\mathbb{R}}^n_{+}$. In particular, by Step~1,
Step~2, and Step~5 (all invoked with $L^\top$ in place of $L$) we know that 
for each compact subset $K$ of ${\mathbb{R}}^n_{+}$ with $y\in\mathring{K}$
there exists a constant $C_{y,K}\in(0,\infty)$ with the property that 
\begin{align}\label{GLa.1.i}
&\big(\mathcal{N}^{\,\mathbb{R}^{n}_{+}\setminus K}_\kappa G^{L^\top}(\cdot,y)\big)(x')\leq
C_{y,K}\Big(\frac{1+\log_{+}|x'|}{1+|x'|^{n-1}}\Big)\,\,\text{ at each }\,\,x'\in{\mathbb{R}}^{n-1},
\\[4pt]
&\big({\mathcal{N}}_\kappa^{\,\mathbb{R}^{n}_{+}\setminus K}(\nabla_X G^{L^\top})(\cdot\,,y)\big)(x')
\leq\frac{C_{y,K}}{1+|x'|^{n-1}}\,\,\text{ at each }\,\,x'\in{\mathbb{R}}^{n-1},
\label{GLa.1.ii}
\\[4pt]
& G^{L^\top}(\cdot\,,y)\in\big[W^{1,1}_{\rm loc}({\mathbb{R}}^n_{+})\big]^{M\times M},\qquad
G^{L^\top}(\cdot\,,y)\Big|_{\partial\mathbb{R}^{n}_{+}}^{{}^{\kappa-{\rm n.t.}}}=0
\,\,\text{ a.e. on }\,\,{\mathbb{R}}^{n-1},
\label{GLa.2}
\\[4pt]
&\text{and }\,\,\,\big(\nabla_X G^{L^\top}\big)(\cdot\,,y)
\Big|_{\partial\mathbb{R}^{n}_{+}}^{{}^{\kappa-{\rm n.t.}}}
\,\,\text{ exists at a.e. point on }\,\,{\mathbb{R}}^{n-1}.
\label{GLa.3}
\end{align}

Having fixed an index $\gamma\in\{1,\dots,M\}$, we now invoke Lemma~\ref{Lgav-TeD} 
with the function $u$ as above and with $v:=G^{L^\top}_{\cdot\,\gamma}(\cdot,y)$. 
Properties \eqref{naIaPa.4}-\eqref{GHReRFv.BNv} and \eqref{GLa.1.i}-\eqref{GLa.3} 
guarantee that all hypotheses of Lemma~\ref{Lgav-TeD} are presently satisfied. Granted this, we obtain
from \eqref{Ua-eDPa4.L} and \eqref{naIaPa.4}-\eqref{naIaPa.5} that for each aperture parameter $\kappa>0$ we have
\begin{equation}\label{Ua-eDPaVVV}
u_\gamma(y)=-\int_{\mathbb{R}^{n-1}}
\Big(u_\alpha\big|_{\partial\mathbb{R}^{n}_{+}}^{{}^{\kappa-{\rm n.t.}}}\Big)a^{\beta\alpha}_{nn}\,
\Big[\big(\partial_{X_n} G^{L^\top}_{\beta\gamma}\big)(\cdot,y)
\Big]\Big|_{\partial\mathbb{R}^{n}_{+}}^{{}^{\kappa-{\rm n.t.}}}\,d{\mathscr{L}}^{n-1}=0.
\end{equation}
Since $y$ and $\gamma$ are arbitrary, this proves that $u\equiv 0$ 
in ${\mathbb{R}}^n_{+}$, finishing the proof of \eqref{GHUNNDs}. In the 
present case, this further implies that $G_1(\cdot,y)=G_2(\cdot,y)$ 
in ${\mathbb{R}}^n_{+}\setminus\{y\}$ which shows 
that $G_1(\cdot,\cdot)=G_2(\cdot,\cdot)$ in 
${\mathbb{R}}^n_{+}\times{\mathbb{R}}^n_{+}\setminus{\rm diag}$, as wanted.

\vskip 0.08in
\noindent{\tt Step~7:} \textit{Denote by $G^{L^\top}(\cdot,\cdot)$ the Green 
function for $L^{\top}$ in ${\mathbb{R}}^n_{+}$ constructed in a similar manner 
to \eqref{Ihab-Ygab673}, with $L$ replaced by $L^{\top}$. 
Then $G^L(x,y)=\big[G^{L^\top}\!\!(y,x)\big]^\top$ for all 
$x,y\in{\mathbb{R}}^n_{+}$ with $x\not=y$.}

\vskip 0.08in
The strategy for proving this symmetry condition is to apply Lemma~\ref{Lgav-TeD} 
in the following setting. Choose two arbitrary distinct points $x,y\in{\mathbb{R}}^n_{+}$, 
and pick some aperture parameter $\kappa>0$. Take $x_1^\star:=x$, $x_2^\star:=y$, 
and fix two positive numbers $r_1$, $r_2$ which are sufficiently small (such that the conditions 
in \eqref{CPTs} are satisfied). Let us also select $\phi,\psi$ as in \eqref{Ua-eNBVVatGG} for these specifications. 
Next, fix an arbitrary $\eta,\gamma\in\{1,\dots,M\}$ and consider the vector-valued functions 
$u:=\big(G^L_{\alpha\eta}(\cdot,x)\big)_{1\leq\alpha\leq M}$ and 
$v:=\big(G^{L^\top}_{\beta\gamma}(\cdot,y)\big)_{1\leq\beta\leq M}$.

Since $L^\top$ enjoys similar properties to those of $L$, from Step~1, Step~2, and Step~5, we deduce that 
\eqref{GLa.1.i}-\eqref{GLa.3} hold with $K:=\overline{B(y,r_2)}$. Similarly, \eqref{GLa.1.i}-\eqref{GLa.3} written with 
$G^L(\cdot,x)$ in place of $G^{L^\top}(\cdot,y)$ and for $K:=\overline{B(y,r_1)}$ are also valid. 
As such, conditions \eqref{hBbac.1}-\eqref{DiBBa.LLap.BBBB} are satisfied by the functions $u,v$. 
Consequently, \eqref{Ua-eDPa4.L} yields in this case 
\begin{align}\label{Ua-eDPa4.LiR2}
&\hskip -0.40in
{}_{{\mathcal{E}}'(\mathbb{R}^{n}_{+})}\Big\langle\delta_{\beta\eta}\delta_x,
\phi\,G^{L^\top}_{\beta\gamma}(\cdot,y)\Big\rangle_{{\mathcal{E}}(\mathbb{R}^{n}_{+})}
={}_{{\mathcal{E}}'(\mathbb{R}^{n}_{+})}\Big\langle\delta_{\alpha\gamma}\delta_y,\,
\psi\,G^L_{\alpha\eta}(\cdot,x)\Big\rangle_{{\mathcal{E}}(\mathbb{R}^{n}_{+})}
\nonumber\\[6pt]
&\hskip 0.40in
+\int_{\mathbb{R}^{n-1}}
\Big(G^L_{\alpha\eta}(\cdot,x)\big|_{\partial\mathbb{R}^{n}_{+}}^{{}^{\kappa-{\rm n.t.}}}\Big) 
a^{\beta\alpha}_{nn}\,\Big[\big(\partial_{X_n} G^{L^\top}_{\beta\gamma}\big)(\cdot,y)\Big]
\Big|_{\partial\mathbb{R}^{n}_{+}}^{{}^{\kappa-{\rm n.t.}}}\,d{\mathscr{L}}^{n-1}.
\end{align}
Upon recalling that $G^L_{\alpha\eta}(\cdot,x)\big|_{\partial\mathbb{R}^{n}_{+}}^{{}^{\kappa-{\rm n.t.}}}=0$ 
a.e. in ${\mathbb{R}}^{n-1}$ and keeping in mind that $\phi(x)=\psi(y)=1$, formula \eqref{Ua-eDPa4.LiR2} 
simply becomes $G^{L^\top}_{\eta\gamma}(x,y)=G^{L}_{\gamma\eta}(y,x)$, proving the desired symmetry condition.  

\vskip 0.08in
\noindent{\tt Step~8:} \textit{If $R_L(\cdot,\cdot)$ is as in Step~3, then for any 
$\alpha,\beta\in\mathbb{N}_0^n$ there exists $C_{\alpha\beta}\in(0,\infty)$ such 
that for every $(x,y)\in{\mathbb{R}}^n_{+}\times{\mathbb{R}}^n_{+}$ one has}
\begin{equation}\label{maiTafbE}
\big|\big(\partial^\alpha_X\partial^\beta_Y R_L\big)(x,y)\big|
\leq C_{\alpha\beta}\,|x-\overline{y}|^{2-n-|\alpha|-|\beta|}\,\,
\text{ if $|\alpha|+|\beta|>0$, or $n\geq 3$}.
\end{equation}
In light of \eqref{GHCkn.Re.1A} and \eqref{Yjn1.20} it suffices to show that
\begin{equation}\label{Yjn1.GaeEE}
\begin{array}{c}
\text{for each multi-indices }\,\alpha,\beta\in\mathbb{N}_0^n
\,\text{ there exists }\,C_{\alpha\beta}\in(0,\infty)\,\text{ such that}
\\[4pt]
\big|\big(\partial^\alpha_X\partial^\beta_Y R^{(2)}_L\big)(x,y)\big|
\leq C_{\alpha\beta}|x-\overline{y}|^{2-n-|\alpha|-|\beta|}\,\,\text{ for each }\,\,
(x,y)\in{\mathbb{R}}^n_{+}\times{\mathbb{R}}^n_{+}.
\end{array}
\end{equation}
To this end, we first observe that in concert with \eqref{E-Trans} and
\eqref{Rj-CINF}, the transposition law proved in Step~7 implies 
\begin{equation}\label{eq1.11}
R_L(x,y)=\big[R_{L^\top}(y,x)\bigr]^\top,\qquad\forall\,(x,y)\in
{\mathbb{R}}^n_{+}\times{\mathbb{R}}^n_{+}.
\end{equation}
Fix an arbitrary multi-index
$\beta\in\mathbb{N}_0^n$ and note that \eqref{eq1.11} entails
\begin{equation}\label{eq1.21}
\big(\partial^\beta_Y R_L\big)(x,y)
=\Big[\big(\partial^\beta_X R_{L^\top}\big)(y,x)\Bigr]^{\top},
\qquad\forall\,(x,y)\in{\mathbb{R}}^n_{+}\times{\mathbb{R}}^n_{+}.
\end{equation}
Hence, if $R^{(1)}_{L^\top}(\cdot,\cdot)$ and $R^{(2)}_{L^\top}(\cdot,\cdot)$
are defined analogously to $R^{(1)}_L(\cdot,\cdot)$ and $R^{(2)}_L(\cdot,\cdot)$,
with $L^\top$ playing now the role of $L$, then
for each $(x,y)\in{\mathbb{R}}^n_{+}\times{\mathbb{R}}^n_{+}$ we obtain
\begin{equation}\label{eq1.21YhanPPP}
\big(\partial^\beta_Y R^{(2)}_L\big)(x,y)
=\Big[\big(\partial^\beta_X R^{(1)}_{L^\top}\big)(y,x)\Big]^{\top}
+\Big[\big(\partial^\beta_X R^{(2)}_{L^\top}\big)(y,x)\Big]^{\top}
-\big(\partial^\beta_Y R^{(1)}_L\big)(x,y).
\end{equation}
Observe that $R^{(1)}_{L^\top}(\cdot,\cdot)$ and $R^{(2)}_{L^\top}(\cdot,\cdot)$
have properties similar to those of $R^{(1)}_{L}(\cdot,\cdot)$ and $R^{(2)}_{L}(\cdot,\cdot)$, 
respectively. Granted this, from \eqref{eq1.21YhanPPP}, \eqref{eq1.20}, and \eqref{Yjn1.20} written 
for $L^\top$ in place of $L$, as well as \eqref{Yjn1.20} as originally stated, we obtain 
(upon noticing that $|y-\overline{x}|=|x-\overline{y}|$ for all $x,y\in\mathbb{R}^n_+$)
\begin{equation}\label{eq1.22}
\begin{array}{c}
\big|\big(\partial^\beta_Y R^{(2)}_L\big)(x,y)\big|
\leq C_\beta|x-\overline{y}|^{2-n-|\beta|},
\quad\forall\,(x,y)\in{\mathbb{R}}^n_{+}\times{\mathbb{R}}^n_{+},
\\[6pt]
\text{for every multi-index }\,\,\beta\in\mathbb{N}_0^n\,\,
\text{ with length }\,\,|\beta|>0.
\end{array}
\end{equation}
Let us also point out that by differentiating
\eqref{GHCkn.Re.1C} with respect to $y$ we obtain
\begin{equation}\label{GHCkn.Re.1BcD}
\begin{array}{c}
\big(\partial^\beta_Y R^{(2)}_L\big)(x,y)=(-1)^{|\beta|}
P^L_{x_n}\ast\Big(\big[\big(\partial^\beta E^L\big)(\cdot-y)
-\big(\partial^\beta E^L\big)(\cdot-\overline{y})\big]
\big|_{\partial{\mathbb{R}}^n_{+}}\Big)(x'),
\\[10pt]
\text{for each $x=(x',x_n)\in{\mathbb{R}}^n_{+}$, each   
$y\in{\mathbb{R}}^n_{+}$, and each $\beta\in{\mathbb{N}}_0^n$}.
\end{array}
\end{equation}
Much as before, this implies that for each $\beta\in{\mathbb{N}}_0^n$,
\begin{equation}\label{eq1.23}
\left\{
\begin{array}{l}
\big(\partial^\beta_Y R^{(2)}_L\big)(\cdot,y)\in\big[W^{1,2}\big({\mathbb{R}}^n_{+}\cap B(0,R)\big)\big]^{M\times M}
\,\,\text{ for each }\,\,R>0,
\\[8pt]
L\big[\big(\partial^\beta_Y R^{(2)}_L\big)(\cdot,y)\big]=0
\,\,\text{ in }\mathbb{R}^n_{+},
\\[10pt]
{\rm Tr}\,\big[\big(\partial^\beta_Y R^{(2)}_L\big)(\cdot,y)\big]
=(-1)^{|\beta|}\big[(\partial^\beta E^L)(\cdot-y)
-(\partial^\beta E^L)(\cdot-\overline{y})\big]\Big|_{\partial{\mathbb{R}}^n_{+}}
\,\,\text{ on }\,\,\mathbb{R}^{n-1}.
\end{array}
\right.
\end{equation}
With \eqref{eq1.22} and \eqref{eq1.23} in place of \eqref{Rj-222}
and \eqref{GHCkn.Re.7}-\eqref{GHCkn.Re.8BIS}, respectively, 
we can now run the same program that has led to \eqref{eq1.20}
and obtain \eqref{Yjn1.GaeEE} in the case in which $|\beta|>0$.
There remains to observe that, in the case when $|\beta|=0$, 
the claim in \eqref{Yjn1.GaeEE} is already contained in \eqref{eq1.20}.
Hence, \eqref{Yjn1.GaeEE} holds as stated.

\vskip 0.08in
\noindent{\tt Step~9:} \textit{Proof of claims in parts {\it (6)}-{\it (9)} in the statement
of Theorem~\ref{ta.av-GGG.2A}.}
\vskip 0.08in

In the case when $|\alpha|=|\beta|=0$ and $n=2$, estimate \eqref{mainest} 
is contained in \eqref{Rj}, while the case when either $|\alpha|+|\beta|>0$, or 
$n\geq 3$, follows from \eqref{GHCkn.Re.1A}, \eqref{Yjn1.20}, and \eqref{Yjn1.GaeEE}.
Moreover, that $G^L(\cdot,\cdot)$ satisfies \eqref{mainest2G}-\eqref{maKnaTTGB}
is a consequence of \eqref{defRRR}-\eqref{mainest}, \eqref{fs-est}, 
and \eqref{Ojhb}.

Next, the fact that the Green function $G^L(\cdot,\cdot)$ satisfies the 
property displayed in \eqref{mainest3G} follows from \eqref{mainest2G}
and the observation that given $N\in(0,\infty)$ and $x_\star\in{\mathbb{R}}^n$ the 
function $f$ defined by $f(x):=|x-x_\star|^{-N}$ for each $x\in{\mathbb{R}}^n_{+}$ 
has the property that
\begin{equation}\label{Uagv-9ybv5RT}
\|f\|_{L^{n/N,\infty}({\mathbb{R}}^n_{+})}\leq\big(n\,\omega_{n-1}\big)^{N/n}.
\end{equation}
Indeed, for each $\lambda>0$ we have 
\begin{align}\label{Uagv-9ybv5RT.2}
{\mathscr{L}}^n\Big(\big\{x\in{\mathbb{R}}^n_{+}:\,|f(x)|>\lambda\big\}\Big)
=&\,{\mathscr{L}}^n\Big(\big\{x\in{\mathbb{R}}^n_{+}:\,|x-x_\star|<\lambda^{-1/N}
\big\}\Big)
\nonumber\\[4pt]
\leq &\,{\mathscr{L}}^n\Big(B\big(x_\star,\lambda^{-1/N}\big)\Big)
=\frac{n\,\omega_{n-1}}{\lambda^{n/N}},
\end{align}
hence \eqref{Uagv-9ybv5RT} follows upon writing  
\begin{align}\label{Uagv-9ybv5RT.3}
\|f\|_{L^{n/N,\infty}({\mathbb{R}}^n_{+})}
=\sup\,\Big\{\lambda\,{\mathscr{L}}^n\Big(\big\{x\in{\mathbb{R}}^n_{+}:\,
|f(x)|>\lambda\big\}\Big)^{N/n}:\,\lambda>0\Big\}\leq\big(n\,\omega_{n-1}\big)^{N/n}.
\end{align}

Moving on, the goal is to show that if 
$p\in\big[1,\frac{n}{n-1}\big)$ then the properties listed in \eqref{mainest4G} 
hold for each $\zeta\in{\mathscr{C}}^\infty_c({\mathbb{R}}^n)$. To see that this 
is the case pick $\zeta\in{\mathscr{C}}^\infty_c({\mathbb{R}}^n)$ and fix an arbitrary 
$y\in{\mathbb{R}}^n_{+}$. Then by \eqref{mainest2G}-\eqref{maKnaTTGB} we have
\begin{align}\label{Uagv-9ybv5RT.5}
\big|\nabla(\zeta G^L(\cdot,y))\big|
\leq &\|\zeta\|_{L^\infty({\mathbb{R}}^n)}\big|(\nabla_X G^L)(\cdot,y)\big|+|\nabla\zeta|\big|G^L(\cdot,y)\big|
\nonumber\\[4pt]
\leq & C\|\zeta\|_{L^\infty({\mathbb{R}}^n)}{\mathbf 1}_{{\rm supp}\,\zeta}|\cdot-y|^{1-n}
\nonumber\\[4pt]
& +C\|\nabla\zeta\|_{L^\infty({\mathbb{R}}^n)}{\mathbf 1}_{{\rm supp}\,\zeta}
\left\{
\begin{array}{ll}
|\cdot-y|^{2-n} &\text{ if }\,\,n\geq 3,
\\[4pt]
1+\big|\ln|\cdot-\overline{y}|\big| &\text{ if }\,\,n=2,
\end{array}
\right.
\end{align} 
and, hence, $\nabla(\zeta G^L(\cdot,y))$ has components in $L^p({\mathbb{R}}^n_{+})$ for each 
$p\in\big[1,\frac{n}{n-1}\big)$, uniformly in $y$. In a similar fashion, 
$\zeta G^L(\cdot,y)\in\big[L^p({\mathbb{R}}^n_{+})\big]^{M\times M}$ for each 
$p\in\big[1,\frac{n}{n-1}\big)$, uniformly in $y$, which concludes the 
proof of the inequality in \eqref{mainest4G}. Finally, the fact that 
the membership in \eqref{mainest4G} also holds is a consequence of what 
we have just proved, \eqref{GHCewd-24.RRe}, and Step~2.

\vskip 0.08in
\noindent{\tt Step~10:} \textit{Proof of the claims in parts {\it (1)}-{\it (5)} and {\it (10)} 
in the statement of Theorem~\ref{ta.av-GGG.2A}.}
\vskip 0.08in

The membership in \eqref{GHCewd-22.RRe.4} in the case when $|\alpha|=|\beta|=0$ 
is contained in \eqref{hBb-TR.D2}, while the case when $|\alpha|+|\beta|>0$
follows from \eqref{mainest2G} and \eqref{GHCewd-12-best}.
The regularity property \eqref{Fvabbb-7tF} has been established in Step~2,
while the claims in \eqref{Aivb-gVV} are consequences of \eqref{GHCewd-22.RRe.4},
Proposition~\ref{p3.2.6}, and \eqref{GHCewd-24.RRe}.
As regards the translation invariance property \eqref{JKBvc-ut4}, for each 
$(x,y)\in{\mathbb{R}}^n_{+}\times{\mathbb{R}}^n_{+}\setminus{\rm diag}$ 
and $z'\in{\mathbb{R}}^{n-1}$ we may write, based on \eqref{Ihab-Ygab673}, 
\begin{align}\label{BVvtr.R3}
G^L\big(x-(z',0), & y-(z',0)\big)
\\[4pt]
= & \,E^L\big(x-(z',0)-y+(z',0)\big)
-E^L\big(x-(z',0)-\overline{y-(z',0)}\,\big)
\nonumber\\[4pt]
& -P^L_t\ast\Big(\big[E^L\big(\cdot-y+(z',0)\big)
-E^L\big(\cdot-\,\overline{y-(z',0)}\,\big)\big]
\big|_{\partial{\mathbb{R}}^n_{+}}\Big)\big((x-(z',0))'\big).
\nonumber
\end{align}
Note that the translation invariance of the reflection $y\mapsto\overline{y}$ 
readily gives
\begin{align}\label{Hbav-YRDSZ}
&\hskip -0.80in
E^L\big(x-(z',0)-y+(z',0)\big)-E^L\big(x-(z',0)-\overline{y-(z',0)}\,\big)
\nonumber\\[4pt]
&\hskip 2.00in
=E^L(x-y)-E^L(x-\overline{y}\,)
\end{align}
while the translation invariance of the convolution permits us to write 
\begin{align}\label{Hbav-YRDSZ.2}
P^L_t\ast\Big(\big[E^L\big(\cdot-y+(z',0)\big)
& -E^L\big(\cdot-\,\overline{y}+(z',0)\,\big)\big]
\big|_{\partial{\mathbb{R}}^n_{+}}\Big)(x'-z')
\nonumber\\[4pt]
= & P^L_t\ast\Big(\big[E^L(\cdot-y)-E^L(\cdot-\,\overline{y}\,)\big]
\big|_{\partial{\mathbb{R}}^n_{+}}(\cdot+z')\Big)(x'-z')
\nonumber\\[4pt]
= & P^L_t\ast\Big(\big[E^L(\cdot-y)-E^L(\cdot-\,\overline{y}\,)\big]
\big|_{\partial{\mathbb{R}}^n_{+}}\Big)(x').
\end{align}
Hence, \eqref{JKBvc-ut4} follows from \eqref{BVvtr.R3}-\eqref{Hbav-YRDSZ.2}.

Finally, the homogeneity property recorded in \eqref{mainest3G.LLL} follows from \eqref{Ihab-Ygab673}, 
item {\it (1)} in Theorem~\ref{FS-prop}, and item {\it (iv)} from Remark~\ref{Ryf-uyf} 
(bearing in mind the manner in which $P^L$ and $K^L$ are related; 
cf. part {\it (c)} of Definition~\ref{defi:Poisson}).  

\vskip 0.08in
\noindent{\tt Step~11:} \textit{The Green function $G^L(\cdot,\cdot)$ for $L$ in ${\mathbb{R}}^n_{+}$ 
may be written as in \eqref{GHCewd-2PiK}.
}
\vskip 0.08in
To see that this is the case fix $y\in{\mathbb{R}}^n_{+}$ and observe that \eqref{fs-est} gives
\begin{equation}\label{IhJG-YD-54}
\big|E^L((x',0)-y)\big|+\big|E^L((x',0)-\overline{y}\,)\big|\leq C\left\{
\begin{array}{ll}
\displaystyle
\frac{1}{1+|x'|^{n-2}} & \text{ if }\,n\geq 3,
\\[12pt]
1+|\ln|x'|| & \text{ if }\,n=2,
\end{array}
\right.
\end{equation}
for each $x'\in{\mathbb{R}}^{n-1}$, for some $C\in(0,\infty)$ depending only on $y$. 
In concert with the estimate in item {\it (a)} of Definition~\ref{defi:Poisson},
this shows that the convolution in the last term in \eqref{Ihab-Ygab673} may be decoupled. This allows us to write, 
for each $y\in{\mathbb{R}}^n_{+}$ and each $x=(x',t)\in{\mathbb{R}}^n_{+}\setminus\{y\}$, 
\begin{align}\label{Ihioh-he2}
G^L(x,y)= & E^L(x-y)
-P^L_t\ast\Big(\big[E^L(\cdot-y)\big]\big|_{\partial{\mathbb{R}}^n_{+}}\Big)(x')
\nonumber\\[4pt]
& -E^L(x-\overline{y}\,)+P^L_t\ast\Big(\big[E^L(\cdot-\overline{y}\,)
\big]\big|_{\partial{\mathbb{R}}^n_{+}}\Big)(x').
\end{align}
As such, matters have been reduced to proving that the two terms in the last line above cancel. To this end, define 
$u,v:{\mathbb{R}}^n_{+}\to{\mathbb{C}}^M$ by setting for each $x=(x',t)\in{\mathbb{R}}^n_{+}$
\begin{equation}\label{TCVV-yr6y}
u(x):=E^L(x-\overline{y}\,)\,\,\text{ and }\,\,
v(x):=P^L_t\ast\Big(\big[E^L(\cdot-\overline{y}\,)\big]\big|_{\partial{\mathbb{R}}^n_{+}}\Big)(x').
\end{equation}
Henceforth, let us assume that $n\geq 3$. Then, with 
\begin{equation}\label{TCVV-yr6y.123.FFF}
f:=E^L(\cdot-\overline{y}\,)\big|_{\partial{\mathbb{R}}^n_{+}}, 
\end{equation}
from \eqref{fs-est} we have 
\begin{equation}\label{TCVV-yr6y.123}
|f(x')|\leq C(1+|x'|)^{2-n}\,\,\text{ for each }\,\,x'\in{\mathbb{R}}^{n-1}. 
\end{equation}
In particular, 
\begin{equation}\label{TCVV-yr6y.123-bis}
f\in\bigcap_{p>\frac{n-1}{n-2}}\big[L^p({\mathbb{R}}^{n-1},{\mathcal{L}}^{n-1})\big]^{M\times M}
\subseteq\Big[L^1\Big({\mathbb{R}}^{n-1}\,,\,\frac{dx'}{1+|x'|^{n}}\Big)\Big]^{M\times M}
\end{equation}
so Proposition~\ref{thm:existence} and the properties of $E^L$ from Theorem~\ref{FS-prop} give that for each $\kappa>0$ we have
\begin{equation}\label{TCVV-yr6y.2}
u,v\in\big[{\mathscr{C}}^\infty({\mathbb{R}}^n_{+})\big]^{M\times M},\quad
Lu=Lv=0\,\,\text{ in }\,\,{\mathbb{R}}^n_{+},\,\,\text{ and }\,\,
u\Big|^{{}^{\kappa-{\rm n.t.}}}_{\partial{\mathbb{R}}^n_{+}}=f
=v\Big|^{{}^{\kappa-{\rm n.t.}}}_{\partial{\mathbb{R}}^n_{+}}.
\end{equation}
Also, from \eqref{TCVV-yr6y}, \eqref{exist:Nu-Mf}, \eqref{TCVV-yr6y.123-bis}, and the boundedness of the Hardy-Littlewood maximal operator 
we see that 
\begin{equation}\label{pauha.2}
{\mathcal{N}}_\kappa v\in\bigcap_{p>\frac{n-1}{n-2}}L^p({\mathbb{R}}^{n-1},{\mathcal{L}}^{n-1})
\subseteq L^1\Big({\mathbb{R}}^{n-1},\frac{dx'}{1+|x'|^{n-1}}\Big).
\end{equation}
Consequently, we may conclude that $u=v$ in ${\mathbb{R}}^n_{+}$, as wanted, by \eqref{GHUNNDs} 
as soon as we show that the current function $u$ satisfies \eqref{GHReRFv.BNv}. In turn, this is going to be a direct consequence of \eqref{TCVV-yr6y},
\eqref{fs-est} and a general fact to the effect that for any dimension $n\in{\mathbb{N}}$ with $n\geq 2$, any exponent $N\geq 0$, 
any compact set $K\subset{\mathbb{R}}^n$, any point $z\in\mathring{K}$, and any aperture parameter $\kappa>0$, 
there exists a finite constant $C(n,N,K,\kappa,z)>0$ with the property that 
\begin{equation}\label{UUNNpkjgr}
\sup_{y\in\Gamma_\kappa(x')\setminus K}\Big[|y-z|^{-N}\Big]\leq C(n,N,K,\kappa,z)
\big[|x'|+1\big]^{-N}\,\,\,\text{ for each }\,\,x'\in{\mathbb{R}}^{n-1}.
\end{equation}

To justify the claim made in \eqref{UUNNpkjgr}, note that it suffices to show that there exists a finite constant 
$C(n,K,\kappa,z)>0$ for which 
\begin{equation}\label{UUNNpkjgr.2}
\inf_{y\in\Gamma_\kappa(x')\setminus K}|y-z|\geq C(n,K,\kappa,z)
\max\big\{|x'|,1\big\}\,\,\,\text{ for each }\,\,x'\in{\mathbb{R}}^{n-1}.
\end{equation}
With this goal in mind, we first estimate 
\begin{equation}\label{UUNNpkjgr.3}
\inf_{y\in\Gamma_\kappa(x')\setminus K}|y-z|\geq{\rm dist}\,(z,\partial K)=:C(n,K,\kappa,z)\in(0,\infty).
\end{equation}
Thus, there remains to show that there exists a constant $C(n,K,\kappa,z)\in(0,\infty)$ such that 
\begin{equation}\label{UUNNpkjgr.4}
\inf_{y\in\Gamma_\kappa(x')\setminus K}|y-z|\geq C(n,K,\kappa,z)|x'|\,\,\,\text{ for each }\,\,x'\in{\mathbb{R}}^{n-1},
\end{equation}
To prove \eqref{UUNNpkjgr.4}, fix $x'\in{\mathbb{R}}^{n-1}$ and 
define the angle $\alpha\in(0,\pi/2)$ to be half the aperture of the cone 
$\Gamma_\kappa(x')$ (note that $\alpha$ depends exclusively on $\kappa$). 
Also, pick $R_K\in(0,\infty)$ sufficiently large so that $K\subseteq B(0,R_K)$ and define 
\begin{equation}\label{eq:yugwdsytg2qd}
C_\ast:=\frac{2R_K}{\cos\alpha\cdot{\rm dist}\,(z,\partial K)}\in(0,\infty).
\end{equation}
We next consider two cases. First, if $|x'|\leq C_\ast\cdot{\rm dist}\,(z,\partial K)$ then 
\begin{equation}\label{UUNNpkjgr.3hbs}
\inf_{y\in\Gamma_\kappa(x')\setminus K}|y-z|\geq{\rm dist}\,(z,\partial K)\geq|x'|/C_\ast
\end{equation}
so \eqref{UUNNpkjgr.4} holds in this situation. Second, if $|x'|>C_\ast\cdot{\rm dist}\,(z,\partial K)$ then 
\eqref{eq:yugwdsytg2qd} implies $|x'|>\frac{2R_K}{\cos\alpha}$. Hence ${\rm dist}\,\big(0,\Gamma_\kappa(x')\big)=|x'|\cos\alpha>R_K$, 
which forces $B(0,R_K)$ to be disjoint from $\Gamma_\kappa(x')$. In particular, $K$ is disjoint from $\Gamma_\kappa(x')$. 
Let us also observe that since $z\in K\subseteq B(0,R_K)$ we have $|z|<R_K$. Bearing these two properties in mind, we may then estimate 
\begin{align}\label{eq:YYFUaytagNb7}
|x'|\cos\alpha &={\rm dist}\,\big(0,\Gamma_\kappa(x')\big)\leq
{\rm dist}\,\big(z,\Gamma_\kappa(x')\big)+|z|
\nonumber\\[4pt]
&<\inf_{y\in\Gamma_\kappa(x')}|y-z|+\Big(\frac{R_K}{{\rm dist}\,(z,\partial K)}\Big){\rm dist}\,(z,\partial K)
\nonumber\\[4pt]
&<\inf_{y\in\Gamma_\kappa(x')\setminus K}|y-z|+\Big(\frac{R_K}{{\rm dist}\,(z,\partial K)}\Big)\frac{|x'|}{C_\ast}
\nonumber\\[4pt]
&=\inf_{y\in\Gamma_\kappa(x')\setminus K}|y-z|+\frac{|x'|\cos\alpha}{2},
\end{align}
which shows that \eqref{UUNNpkjgr.4} also holds in this case. At this stage, the proof of \eqref{UUNNpkjgr.4} is complete, 
so \eqref{UUNNpkjgr} has been justified. As noted earlier, having justified \eqref{UUNNpkjgr} concludes the proof of 
\eqref{GHCewd-2PiK} when $n\geq 3$.

A proof of \eqref{GHCewd-2PiK} which works for $n\geq 2$ goes as follows. Fix an exponent $\eta\in(0,1)$ along with 
a point $y=(y',y_n)\in{\mathbb{R}}^n_{+}$ and note that for each $x=(x',x_n)\in{\mathbb{R}}^n_{+}$ we may use \eqref{fs-est}
to estimate 
\begin{align}\label{UUNNpkjgr.13.D+M}
x_n^{1-\eta}|(\nabla E^L)(x-\overline{y})| & \leq\frac{Cx_n^{1-\eta}}{\big[|x'-y'|^2+(x_n+y_n)^2\big]^{(n-1)/2}}
\nonumber\\[4pt]
&\leq\frac{Cx_n^{1-\eta}}{(x_n+y_n)^{n-1}}\leq\frac{C}{y_n^{n-2+\eta}}.
\end{align}
As such, 
\begin{equation}\label{UUNNpkjgr.13.D+M.222}
\sup_{x=(x',x_n)\in{\mathbb{R}}^n_{+}}\Big[x_n^{1-\eta}|(\nabla E^L)(x-\overline{y})|\Big]\leq\frac{C}{y_n^{n-2+\eta}}<+\infty.
\end{equation}
In addition, 
\begin{equation}\label{UUNNpkjgr.13.D+M.333}
E^L(\cdot-\overline{y})\in\big[{\mathscr{C}}^\infty({\mathbb{R}}^n_{+})\big]^{M\times M}\,\,\text{ and }\,\,
L\big[E^L(\cdot-\overline{y})\big]=0\,\,\text{ in }\,\,{\mathbb{R}}^n_{+}.
\end{equation}
From \eqref{UUNNpkjgr.13.D+M.333} and \eqref{UUNNpkjgr.13.D+M.222} we then conclude (see, e.g., \cite[(5.11.78), p.\,490]{GHA.I}) that 
\begin{equation}\label{UUNNpkjgr.13.D+M.444}
E^L(\cdot-\overline{y})\,\,\text{ belongs to }\,\,\big[\dot{\mathscr{C}}^\eta\big(\overline{{\mathbb{R}}^n_{+}}\big)\big]^{M\times M},
\end{equation}
where $\dot{\mathscr{C}}^\eta\big(\overline{{\mathbb{R}}^n_{+}}\big)$ denotes 
the homogeneous H\"older space of order $\eta$ in $\overline{{\mathbb{R}}^n_{+}}$.
In particular, if $f$ is as in \eqref{TCVV-yr6y.123.FFF}, then
\begin{equation}\label{UUNNpkjgr.13.D+M.444.bis}
f\in\big[\dot{\mathscr{C}}^\eta({\mathbb{R}}^{n-1})\big]^{M\times M},
\end{equation}
which readily implies that, with the supremum taken over all cubes $Q\subseteq{\mathbb{R}}^{n-1}$, 
\begin{equation}\label{defi-BMO.2b.D+M}
\sup_{Q\subset\mathbb{R}^{n-1}}\ell(Q)^{-\eta}\Big(\fint_{Q}\big|f(x')-f_Q\big|^2\,dx'\Big)^{\frac{1}{2}}<+\infty,
\end{equation}
where $\ell(Q)$ stands for the side-length of $Q$ and $f_Q$ denotes the integral average of $f$ over $Q$. 
From \eqref{UUNNpkjgr.13.D+M} we also deduce that if $Q$ is an arbitrary cube in $\mathbb{R}^{n-1}$ then 
\begin{align}\label{UUNNpkjgr.13.D+M.555}
\ell(Q)^{-\eta}\Big(\fint_{Q}\Big(\int_{0}^{\ell(Q)} &\big|(\nabla E^L)\big((x',t)-\overline{y}\big)\big|^2\,t\,dt\Big)\,dx'\Big)^\frac{1}{2}
\nonumber\\[4pt]
&\leq\frac{C\ell(Q)^{-\eta}}{y_n^{n-2+\eta}}\Big(\int_{0}^{\ell(Q)}\frac{1}{t^{2(1-\eta)}}t\,dt\Big)^\frac{1}{2}
=\frac{C}{y_n^{n-2+\eta}},
\end{align}
hence 
\begin{align}\label{UUNNpkjgr.13.D+M.666}
\sup_{Q\subset\mathbb{R}^{n-1}}
\ell(Q)^{-\eta}\Big(\fint_{Q}\Big(\int_{0}^{\ell(Q)}\big|(\nabla E^L)\big((x',t)-\overline{y}\big)\big|^2\,t\,dt\Big)\,dx'\Big)^\frac{1}{2}
<+\infty.
\end{align}
Granted \eqref{UUNNpkjgr.13.D+M.333}, \eqref{TCVV-yr6y.123.FFF}, \eqref{defi-BMO.2b.D+M}, and \eqref{UUNNpkjgr.13.D+M.666}, 
we conclude from the well-posedness result established in \cite[Theorem~1.21]{MMMM16} that the functions 
$u$ and $v$ defined in \eqref{TCVV-yr6y} coincide in ${\mathbb{R}}^n_{+}$. In view of this and \eqref{Ihioh-he2}, 
formula \eqref{GHCewd-2PiK} is now established whenever $n\geq 2$. 

\vskip 0.08in
The proof of Theorem~\ref{ta.av-GGG.2A} is therefore complete. 
\end{proof}

Second, we present the proof of Theorem~\ref{FS-prop.INTR}.

\vskip 0.08in
\begin{proof}[Proof of Theorem~\ref{FS-prop.INTR}]
Work under the assumption that the system $L$ is weakly elliptic and reflection invariant.
Then, as noted in \eqref{eq:edxuye.FDa.4}, the fundamental solution $E^L$ canonically associated with $L$ as in 
Theorem~\ref{FS-prop} is reflection invariant. In such a scenario, having fixed a point $y\in{\mathbb{R}}^n_{+}$, we have
\begin{equation}\label{JKBk-at5G.Faf}
\big[E^L(\cdot-y)-E^L(\cdot-\,\overline{y}\,)\big]\big|_{\partial{\mathbb{R}}^n_{+}}=0
\end{equation}
since $\overline{(x',0)-y}=(x',0)-\overline{y}$ for each $x'\in{\mathbb{R}}^{n-1}$. Consequently, the function 
$G^L(\cdot,\cdot)$ defined in \eqref{JKBvc-ut4.YFav} satisfies \eqref{GHCewd-22.RRe}, \eqref{GHCewd-24.RRe}, and \eqref{GHCewd-23.RRe} 
in Definition~\ref{ta.av-GGG}.

The next step is to show that the function $G^L(\cdot,\cdot)$ defined in \eqref{JKBvc-ut4.YFav} also satisfies 
\eqref{bouMNN.XXX}. To this end, fix a compact set $K$ contained in ${\mathbb{R}}^n_{+}$ and pick a multi-index 
$\alpha\in\mathbb{N}_0^n$. We claim that for each $y\in\mathring{K}$ there exists a constant $C(L,K,\alpha,y)\in(0,\infty)$ 
such that 
\begin{equation}\label{jdhdfasg}
\big|(\partial^\alpha E^L)(x-y)-(\partial^\alpha E^L)(x-\overline{y})\big|
\leq\frac{C(L,K,\alpha,y)}{|x-y|^{n-1+|\alpha|}}\,\,\text{ for all }\,\,x\in{\mathbb{R}}^n_{+}\setminus K.
\end{equation}
To see why this is true, pick $y=(y_1,\dots,y_n)\in\mathring{K}$ along with $x\in{\mathbb{R}}^n_{+}\setminus K$, and consider two cases. 

The first case corresponds to $|x-y|\geq 4y_n$. Since for any $\xi$ between $y$ and $\overline{y}$ we have 
$|x-y|\leq |x-\xi|+|\xi-y|\leq |x-\xi|+2y_n\leq |x-\xi|+\frac{1}{2}|x-y|$, it follows that for all points $\xi$ between 
$y$ and $\overline{y}$ we have $|x-\xi|\geq\frac{1}{2}|x-y|>0$. This, the Mean Value Theorem, and \eqref{fs-est} then imply
\begin{align}\label{ABB-lm-jL.1}
\big|(\partial^\alpha E^L)(x-y)-(\partial^\alpha E^L)(x-\overline{y})\big|
&\leq 2y_n\sup_{\xi\in[y,\overline{y}]}\big|(\nabla\partial^\alpha E^L)(x-\xi)\big|
\nonumber\\[4pt]
&\leq 2y_n\sup_{\xi\in[y,\overline{y}]}\frac{C(L,\alpha)}{|x-\xi|^{n-1+|\alpha|}}
\nonumber\\[4pt]
&\leq\frac{C(L,\alpha,y)}{|x-y|^{n-1+|\alpha|}}.
\end{align}

In the second case, suppose $|x-y|<4y_n$. Since also $|x-y|\geq{\rm dist}(y,\partial K)>0$, it follows that 
\begin{equation}\label{mjxbsh}
\text{$|x-y|\approx 1$ with proportionality constants depending only on $K$ and $y$.}
\end{equation}
Moreover, $|x-\overline{y}|\geq{\rm dist}(\overline{y},{\mathbb{R}}^n_{+})=y_n$ and
$|x-\overline{y}|\leq |x-y|+|y-\overline{y}|<4y_n+2y_n=6y_n$, thus 
\begin{equation}\label{mjxbsh.b}
\text{$|x-\overline{y}|\approx 1$ with proportionality constants depending only on $y$.}
\end{equation}
From \eqref{fs-est} and \eqref{mjxbsh}-\eqref{mjxbsh.b}, we see that there exists 
$C(L,K,\alpha,y)\in(0,\infty)$ such that 
\begin{align}\label{ABB-lm-jL.2}
\big|(\partial^\alpha E^L)(x-y)-(\partial^\alpha E^L)(x-\overline{y})\big|
&\leq\big|(\partial^\alpha E^L)(x-y)\big|+\big|(\partial^\alpha E^L)(x-\overline{y})\big|
\nonumber\\[4pt]
&\leq\frac{C(L,K,\alpha,y)}{|x-y|^{n-1+|\alpha|}}.
\end{align}
Together, \eqref{ABB-lm-jL.1} and \eqref{ABB-lm-jL.2} take care of the estimate in \eqref{jdhdfasg}. 
In turn, \eqref{jdhdfasg} and \eqref{JKBvc-ut4.YFav} imply
\begin{align}\label{ABB-lm-jL.3}
\big|(\partial^\alpha_X G^L)(x,y)\big|\leq\frac{C(L,K,\alpha,y)}{|x-y|^{n-1+|\alpha|}}
\,\,\text{ for all }\,\,x\in{\mathbb{R}}^n_{+}\setminus K.
\end{align}
With this in hand, for each $x'\in{\mathbb{R}}^{n-1}$ we may invoke \eqref{UUNNpkjgr} to estimate
\begin{align}\label{ABB-lm-jL.4}
\Big(\mathcal{N}^{\,{\mathbb{R}}^n_{+}\setminus K}_\kappa(\partial^\alpha_X G^L)(\cdot,y)\Big)(x')
&\leq \Big(\mathcal{N}^{\,{\mathbb{R}}^n_{+}\setminus K}_\kappa\Big(\frac{C_{L,K,\alpha,y}}{|\cdot-y|^{n-1+|\alpha|}}\Big)\Big)(x')
\nonumber\\[4pt]
&\leq\frac{C_{L,K,\alpha,\kappa,y}}{1+|x'|^{n-1+|\alpha|}},
\end{align}
as claimed in \eqref{bouMNN.XXX}. In turn, from \eqref{bouMNN.XXX} (used with $|\alpha|=0$) it clear that 
\eqref{GHCewd-25.RRe} is also satisfied. As such, we may now conclude that $G^L(\cdot,\cdot)$ defined in \eqref{JKBvc-ut4.YFav}
is a Green function for $L$ in $\mathbb{R}^n_{+}$, in the sense of Definition~\ref{ta.av-GGG}.

Moving on, let us prove that there exists precisely one Green function for $L$ in ${\mathbb{R}}^n_{+}$, 
in the sense of Definition~\ref{ta.av-GGG}. The argument so far shows that if 
$L=\Big(a^{\alpha\beta}_{rs}\partial_r\partial_s\Big)_{1\leq\alpha,\beta\leq M}$ is assumed to be weakly elliptic and reflection invariant then it has at least one Green function in ${\mathbb{R}}^n_{+}$, in the sense of Definition~\ref{ta.av-GGG}. 
To establish that this is unique, assume $G_1(\cdot,\cdot)$, $G_2(\cdot,\cdot)$ are two such Green functions. Fix an arbitrary 
point $y\in{\mathbb{R}}^n_{+}$ and define 
\begin{equation}\label{eq:UUU}
u:=G_1(\cdot,y)-G_2(\cdot,y)\,\,\text{ in }\,\,{\mathbb{R}}^n_{+}\setminus\{y\}. 
\end{equation}
Then properties \eqref{GHCewd-22.RRe} and \eqref{GHCewd-23.RRe} guarantee that 
$u\in\big[L^1_{\rm loc}({\mathbb{R}}^n_{+})\big]^{M\times M}$ and $Lu=0$ in 
$\big[{\mathcal{D}}'({\mathbb{R}}^n_{+})\big]^{M\times M}$. Elliptic regularity therefore ensures that 
\begin{equation}\label{naIaPa.4.D+M}
u\in\big[{\mathscr{C}}^\infty({\mathbb{R}}^n_{+})\big]^{M\times M}
\,\,\text{ and }\,\,Lu=0\,\,\text{ pointwise in }\,\,{\mathbb{R}}^n_{+}.
\end{equation}
In addition, \eqref{GHCewd-24.RRe} guarantees that there exists $\kappa>0$ such that 
\begin{equation}\label{naIaPa.5.D+M}
u\big|^{{}^{\kappa-{\rm n.t.}}}_{\partial{\mathbb{R}}^n_{+}}=0\,\,\text{ a.e. in }\,\,{\mathbb{R}}^{n-1}.
\end{equation}
Furthermore, from the finiteness condition in \eqref{GHCewd-25.RRe} and the first property in \eqref{naIaPa.4.D+M} we see that 
$\int_{{\mathbb{R}}^{n-1}}\big({\mathcal{N}}_{\kappa}u\big)(x')\frac{dx'}{1+|x'|^{n-1}}<+\infty$. In particular,
\begin{equation}\label{GHReRFv.BNv.D+M}
\int_{{\mathbb{R}}^{n-1}}\big({\mathcal{N}}_{\kappa}u\big)(x')\frac{dx'}{1+|x'|^{n}}<+\infty.
\end{equation}

Next, we claim that 
\begin{equation}\label{GHUNNDs.D+M}
\parbox{10.00cm}{if $L$ is weakly elliptic and reflection invariant then any function $u$ 
satisfying \eqref{naIaPa.4.D+M}-\eqref{GHReRFv.BNv.D+M} vanishes identically in ${\mathbb{R}}^n_{+}$.}
\end{equation}
To justify this claim, observe from \eqref{JKBvc-ut4.YFav.a.222} that $L$ is reflection invariant if and only if 
$L^\top$ is reflection invariant. Also, it is clear from definitions that $L$ is weakly elliptic if and only if 
$L^\top$ is weakly elliptic. Thus, it makes sense to consider 
$G^{L^\top}(\cdot,\cdot)=\big(G^{L^\top}_{\alpha\gamma}(\cdot,\cdot)\big)_{1\leq\alpha,\gamma\leq M}$, 
the Green function defined as in \eqref{JKBvc-ut4.YFav} in relation to the operator $L^\top$, i.e., 
\begin{equation}\label{JKBvc-ut4.YFav.bxy}
G^{L^\top}(x,y):=E^{L^\top}(x-y)-E^{L^\top}(x-\overline{y})
\,\,\text{ for all }\,\,(x,y)\in{\mathbb{R}}^n_{+}\times{\mathbb{R}}^n_{+}\setminus{\rm diag}.
\end{equation}
To proceed, fix an arbitrary point $y\in{\mathbb{R}}^n_{+}$. Results established in the first part in the current proof
(with $L^\top$ in place of $L$) show that for each compact subset $K$ of ${\mathbb{R}}^n_{+}$ with $y\in\mathring{K}$
there exists a constant $C_{y,K}\in(0,\infty)$ with the property that 
\begin{align}\label{GLa.1.i.D+M}
&\big(\mathcal{N}^{\,\mathbb{R}^{n}_{+}\setminus K}_\kappa G^{L^\top}(\cdot,y)\big)(x')\leq
\frac{C_{y,K}}{1+|x'|^{n-1}}\,\,\text{ for each }\,\,x'\in{\mathbb{R}}^{n-1},
\\[4pt]
&\big({\mathcal{N}}_\kappa^{\,\mathbb{R}^{n}_{+}\setminus K}(\nabla_X G^{L^\top})(\cdot\,,y)\big)(x')
\leq\frac{C_{y,K}}{1+|x'|^{n}}\,\,\text{ for each }\,\,x'\in{\mathbb{R}}^{n-1},
\label{GLa.1.ii.D+M}
\\[4pt]
& G^{L^\top}(\cdot\,,y)\in\big[W^{1,1}_{\rm loc}({\mathbb{R}}^n_{+})\big]^{M\times M},\qquad
G^{L^\top}(\cdot\,,y)\Big|_{\partial\mathbb{R}^{n}_{+}}^{{}^{\kappa-{\rm n.t.}}}=0
\,\,\text{ a.e. on }\,\,{\mathbb{R}}^{n-1},
\label{GLa.2.D+M}
\\[4pt]
&\text{and }\,\,\,\big(\nabla_X G^{L^\top}\big)(\cdot\,,y)\Big|_{\partial\mathbb{R}^{n}_{+}}^{{}^{\kappa-{\rm n.t.}}}
\,\,\text{ exists on }\,\,{\mathbb{R}}^{n-1},
\label{GLa.3.D+M}
\\[4pt]
& L^{\top}\big[G^{L^{\top}}(\cdot\,,y)\big]=\delta_{y}\,I_{M\times M}\,\,\text{ in }\,\,\big[{\mathcal{D}}'({\mathbb{R}}^n_{+})\big]^{M\times M},
\label{GHCewd-23.RRe.TTT}
\end{align}
with the last property visible from \eqref{JKBvc-ut4.YFav.bxy}. 

At this stage, fix an index $\eta\in\{1,\dots,M\}$ and, with $u$ as in \eqref{eq:UUU}, define $w:=(u_{\gamma\eta})_{1\leq\gamma,\eta\leq M}$. 
Next, fix $\gamma\in\{1,\dots,M\}$ and introduce $v:=G^{L^\top}_{\cdot\,\gamma}(\cdot,y)$. 
We apply Lemma~\ref{Lgav-TeD} with $u$ replaced by $w$ and $v$ as above. 
Properties \eqref{naIaPa.4.D+M}-\eqref{GHReRFv.BNv.D+M} and \eqref{GLa.1.i.D+M}-\eqref{GLa.3.D+M} 
ensure that the hypotheses of Lemma~\ref{Lgav-TeD} are verified. As such, we obtain
from \eqref{Ua-eDPa4.L} and \eqref{naIaPa.4.D+M}-\eqref{naIaPa.5.D+M} that for each aperture parameter $\kappa>0$ we have
\begin{equation}\label{Ua-eDPaVVV.D+M}
u_{\gamma\eta}(y)=-\int_{\mathbb{R}^{n-1}}
\Big(u_{\alpha\eta}\big|_{\partial\mathbb{R}^{n}_{+}}^{{}^{\kappa-{\rm n.t.}}}\Big)a^{\beta\alpha}_{nn}\,
\Big[\big(\partial_{X_n} G^{L^\top}_{\beta\gamma}\big)(\cdot,y)
\Big]\Big|_{\partial\mathbb{R}^{n}_{+}}^{{}^{\kappa-{\rm n.t.}}}\,d{\mathscr{L}}^{n-1}=0.
\end{equation}
In view of the fact that $y$ and $\gamma$, $\eta$ are arbitrary, this forces $u\equiv 0$ in ${\mathbb{R}}^n_{+}$, 
finishing the proof of \eqref{GHUNNDs.D+M}. Presently, this further implies $G_1(\cdot,\cdot)=G_2(\cdot,\cdot)$ in 
${\mathbb{R}}^n_{+}\times{\mathbb{R}}^n_{+}\setminus{\rm diag}$, as desired.

Going further, the properties claimed in \eqref{GHCewd-22.RRe.5.D+M} are clear from 
the definition of $G^L(\cdot,\cdot)$ given in \eqref{JKBvc-ut4.YFav}, the fact that the fundamental solution 
$E^L$ canonically associated with $L$ as in Theorem~\ref{FS-prop} is even (cf. \eqref{smmth-odd}), 
reflection invariant, and satisfies the transposition property recorded in item {\it (5)} of Theorem~\ref{FS-prop}.
Finally, that in the case when $L$ is as in \eqref{L-def}-\eqref{L-ell.X} and also reflection invariant the Green function 
constructed in \eqref{JKBvc-ut4.YFav} coincides with the Green function from Theorem~\ref{ta.av-GGG.2A} is clear from the 
uniqueness properties enjoyed by said Green functions. The proof of Theorem~\ref{FS-prop.INTR} is therefore complete. 
\end{proof}

Next we give the proof of Corollary~\ref{y6tFFF.cc}.

\vskip 0.08in
\begin{proof}[Proof of Corollary~\ref{y6tFFF.cc}]
Fix some function $f=(f_\alpha)_{1\leq\alpha\leq M}\in\big[{\mathscr{C}}^\infty_c({\mathbb{R}}^{n-1})\big]^M$, 
along with some point $x=(x',t)\in{\mathbb{R}}^n_{+}$, and some index $\gamma\in\{1,\dots,M\}$. 
Specializing \eqref{Ua-eDPa4.L} to the case when $u(z',s):=(P^L_s\ast f)(z')$ at each 
$(z',s)\in{\mathbb{R}}^n_{+}$, and $v:=\big(G^{L^\top}_{\beta\gamma}\big(\cdot,(x',t)\big)\big)_{1\leq\beta\leq M}$ 
yields the conclusion that for each $\gamma\in\{1,\dots,M\}$ and each 
$(x',t)\in{\mathbb{R}}^n_{+}$ we have (for any chosen aperture parameter $\kappa>0$) 
\begin{align}\label{Ua-eDPa4.LBBw}
\int_{\mathbb{R}^{n-1}} & \big(P^L_{\gamma\alpha}\big)_t(x'-z')f_\alpha(z')\,dz'
=(P^L_t\ast f)_\gamma(x')=u_\gamma(x',t)
\nonumber\\[6pt]
&=-\int_{\mathbb{R}^{n-1}}f_\alpha(z')a^{\beta\alpha}_{nn}
\Big(\Big[\big(\partial_{X_n} G^{L^\top}_{\beta\gamma}\big)\big(\cdot,(x',t)\big)\Big]
\Big|_{\partial\mathbb{R}^{n}_{+}}^{{}^{\kappa-{\rm n.t.}}}\Big)(z',0)\,dz'
\nonumber\\[6pt]
&=-\int_{\mathbb{R}^{n-1}}f_\alpha(z')a^{\beta\alpha}_{nn}
\big(\partial_{X_n} G^{L^\top}_{\beta\gamma}\big)\big((z',0),(x',t)\big)\,dz'
\nonumber\\[6pt]
&=-\int_{\mathbb{R}^{n-1}}f_\alpha(z')a^{\beta\alpha}_{nn}
\big(\partial_{Y_n} G^{L}_{\gamma\beta}\big)\big((x',t),(z',0)\big)\,dz'
\nonumber\\[6pt]
&=-\int_{\mathbb{R}^{n-1}}f_\alpha(z')a^{\beta\alpha}_{nn}
\big(\partial_{Y_n} G^{L}_{\gamma\beta}\big)\big((x'-z',t),0\big)\,dz',
\end{align}
on account of \eqref{GHCewd-22.RRe.5}, \eqref{Fvabbb-7tF}, and \eqref{JKBvc-ut4}.
The arbitrariness of $f$ then forces that for every $\alpha,\gamma\in\{1,\dots,M\}$ we have
\begin{equation}\label{Ua-eD.LanTF}
\big(P^L_{\gamma\alpha}\big)_t(x'-z')=-a^{\beta\alpha}_{nn}
\big(\partial_{Y_n} G^{L}_{\gamma\beta}\big)\big((x'-z',t),0\big)
\end{equation}
for each $x'\in{\mathbb{R}}^{n-1}$, $t>0$, and a.e. $z'\in{\mathbb{R}}^{n-1}$.
Given the continuity properties of the functions involved, it follows that \eqref{Ua-eD.LanTF} actually 
holds for every $z'\in{\mathbb{R}}^{n-1}$. Further specializing this identity to the case when $z':=0'$ 
and $t:=1$ then yields \eqref{Ua-eD.LBBw.3B}. 

Since the Green function for $L$ is unique (cf. Step~6 in the proof of Theorem~\ref{ta.av-GGG.2A}) 
we conclude from \eqref{Ua-eD.LBBw.3B} that the Poisson kernel for $L$ in ${\mathbb{R}}^n_{+}$ 
(whose existence is guaranteed by Theorem~\ref{ya-T4-fav}) is unique as well. 

Finally, the last claim in the statement of Corollary~\ref{y6tFFF.cc} (pertaining to \eqref{Ua-eD.kab}) 
is a consequence of \eqref{Ua-eD.LBBw.3B} and \eqref{JKBvc-ut4.YFav}.
\end{proof}

We continue by proving Theorem~\ref{thm:FP}.

\vskip 0.08in
\begin{proof}[Proof of Theorem~\ref{thm:FP}]
Throughout, assume $L$ is an $M\times M$ system with constant complex coefficients as in \eqref{L-def}-\eqref{L-ell.X}.
In a first stage, suppose $u$ is as in \eqref{jk-lm-jhR-LLL-HM-RN.w} and satisfies the additional assumption  
\begin{eqnarray}\label{Tafva.2222.XXX}
u\big|^{{}^{\kappa-{\rm n.t.}}}_{\partial{\mathbb{R}}^n_{+}}
\,\,\text{ exists at ${\mathscr{L}}^{n-1}$-a.e. point in }\,\,{\mathbb{R}}^{n-1}.
\end{eqnarray}
Then for each $x=(x',t)\in{\mathbb{R}}^n_{+}$ and any $\gamma\in\{1.\dots,M\}$ we have 
\begin{align}\label{Ua-eDPa4.LBBw.XXX}
u_\gamma(x) &=-\int_{\mathbb{R}^{n-1}}\Big(u_\alpha\big|^{{}^{\kappa-{\rm n.t.}}}_{\partial{\mathbb{R}}^n_{+}}\Big)(z')a^{\beta\alpha}_{nn}
\Big(\Big[\big(\partial_{X_n}G^{L^\top}_{\beta\gamma}\big)\big(\cdot,(x',t)\big)\Big]
\Big|_{\partial\mathbb{R}^{n}_{+}}^{{}^{\kappa-{\rm n.t.}}}\Big)(z',0)\,dz'
\nonumber\\[6pt]
&=-\int_{\mathbb{R}^{n-1}}\Big(u_\alpha\big|^{{}^{\kappa-{\rm n.t.}}}_{\partial{\mathbb{R}}^n_{+}}\Big)(z')a^{\beta\alpha}_{nn}
\big(\partial_{Y_n} G^{L}_{\gamma\beta}\big)\big((x'-z',t),0\big)\,dz'
\nonumber\\[6pt]
&=\int_{\mathbb{R}^{n-1}}\big(P^L_{\gamma\alpha}\big)_t(x'-z')
\Big(u_\alpha\big|^{{}^{\kappa-{\rm n.t.}}}_{\partial{\mathbb{R}}^n_{+}}\Big)(z')\,dz'.
\end{align}
Above, the first equality is a consequence of \eqref{Ua-eDPa4.L} (used with $v:=G^{L^\top}_{\cdot\gamma}(\cdot,x)$ and 
$K_2$ a small compact neighborhood of $x$ in ${\mathbb{R}}^n_{+}$), whose applicability in the present context is ensured 
by \eqref{jk-lm-jhR-LLL-HM-RN.w}, \eqref{Tafva.2222.XXX}, as well as \eqref{Fvabbb-7tF} and \eqref{bound-NK-G}-\eqref{bouMNN} 
(written for $L^\top$ in place of $L$). The second equality in \eqref{Ua-eDPa4.LBBw.XXX} may be justified much as in the case 
of (the last part of) \eqref{Ua-eDPa4.LBBw}. Lastly, the final equality in \eqref{Ua-eDPa4.LBBw.XXX} comes from \eqref{Ua-eD.LanTF}.
Having established \eqref{Ua-eDPa4.LBBw.XXX}, all desired conclusions (in \eqref{Tafva.2222}) readily follow. 

To dispense with the additional assumption made in \eqref{Tafva.2222.XXX}, suppose $u$ is precisely as in \eqref{jk-lm-jhR-LLL-HM-RN.w}. 
Running the previous argument with $u(\cdot,\cdot+\varepsilon)$ in place of $u$, where $\varepsilon>0$ is arbitrary, 
we obtain that 
\begin{align}\label{AA-lm-jLL-HM-RN.ppp.wsa}
u(x',t+\varepsilon)=\int_{{\mathbb{R}}^{n-1}}P^L_t(x'-y')f_\varepsilon(y')\,dy'
\,\,\,\text{ for each }\,\,x=(x',t)\in{\mathbb{R}}^n_{+},
\end{align}
where we have abbreviated
\begin{align}\label{AA-lm-jLL-HM-RN.ppp.wsa.f}
f_\varepsilon:=u(\cdot,\varepsilon):{\mathbb{R}}^{n-1}\longrightarrow{\mathbb{C}}^M
\,\,\text{ for each }\,\,\varepsilon>0.
\end{align}
If we also consider the weight 
\begin{equation}\label{eq:16t44.iii.D+M}
\begin{array}{c}
\text{$\omega:{\mathbb{R}}^{n-1}\to(0,\infty)$ defined as} 
\\[4pt]
\text{$\omega(x'):=(1+|x'|^{n-1})^{-1}$ for each $x'\in{\mathbb{R}}^{n-1}$}, 
\end{array}
\end{equation}
then the last condition in \eqref{jk-lm-jhR-LLL-HM-RN.w} entails  
\begin{equation}\label{eq:16t44.iii}
\sup_{\varepsilon>0}|f_\varepsilon|\leq{\mathcal{N}}_{\kappa} u\in 
L^1\big({\mathbb{R}}^{n-1}\,,\,\omega{\mathscr{L}}^{n-1}\big). 
\end{equation}
Granted this, the weak-$\ast$ convergence result from Lemma~\ref{ydadHBB} may be used for the 
sequence $\big\{f_\varepsilon\big\}_{\varepsilon>0}\subset L^1\big({\mathbb{R}}^{n-1}\,,\,\omega{\mathscr{L}}^{n-1}\big)$
to conclude that there exists some $f\in L^1\big({\mathbb{R}}^{n-1}\,,\,\omega{\mathscr{L}}^{n-1}\big)$ 
and some sequence $\{\varepsilon_j\}_{j\in{\mathbb{N}}}\subset(0,\infty)$ which converges to zero 
with the property that 
\begin{equation}\label{eq:16t44.MAD}
\lim_{j\to\infty}\int_{{\mathbb{R}}^{n-1}}\varphi(y')f_{\varepsilon_j}(y')\frac{dy'}{1+|y'|^{n-1}}
=\int_{{\mathbb{R}}^{n-1}}\varphi(y')f(y')\frac{dy'}{1+|y'|^{n-1}}
\end{equation}
for every bounded continuous function $\varphi$ in ${\mathbb{R}}^{n-1}$.
The fact that there exists a constant $C\in(0,\infty)$ for which  
\begin{equation}\label{eq:16t44.iagG}
|P^L(z')|\leq\frac{C}{(1+|z'|^2)^{n/2}}\,\,\text{ for each }\,\,z'\in{\mathbb{R}}^{n-1}
\end{equation}
(see item {\it (a)} of Definition~\ref{defi:Poisson}) 
ensures for each fixed point $(x',t)\in{\mathbb{R}}^n_{+}$ the assignment 
\begin{equation}\label{Fvabbb-7tF.Tda}
\begin{array}{c}
\displaystyle
{\mathbb{R}}^{n-1}\ni y'\mapsto\varphi(y'):=(1+|y'|^{n-1})P^L_t(x'-y')\in{\mathbb{C}}^{M\times M}
\\[6pt]
\text{is a bounded continuous function in ${\mathbb{R}}^{n-1}$}.
\end{array}
\end{equation}
At this stage, from \eqref{AA-lm-jLL-HM-RN.ppp.wsa} and \eqref{eq:16t44.MAD} 
used for the function $\varphi$ defined in \eqref{Fvabbb-7tF.Tda} we obtain 
(bearing in mind that $u$ is continuous in ${\mathbb{R}}^n_{+}$) that 
\begin{align}\label{AA-lm-jLL-HM-RN.ppp.wsa.2}
u(x',t)=\int_{{\mathbb{R}}^{n-1}}P^L_t(x'-y')f(y')\,dy'\,\,\text{ for each }\,\,x=(x',t)\in{\mathbb{R}}^n_{+}.
\end{align}
With this in hand, and since $L^1\big({\mathbb{R}}^{n-1}\,,\,\omega{\mathscr{L}}^{n-1}\big)
\subseteq L^1\Big({\mathbb{R}}^{n-1}\,,\,\displaystyle\frac{dx'}{1+|x'|^{n}}\Big)$, we may invoke 
Proposition~\ref{thm:existence} to conclude that
\begin{equation}\label{exist:u2.b}
\text{$u\big|^{{}^{\kappa-{\rm n.t.}}}_{\partial\mathbb{R}^{n}_{+}}$ exists and equals $f$ 
at ${\mathscr{L}}^{n-1}$-a.e. point in $\mathbb{R}^{n-1}$}.
\end{equation}
Once this has been established, all conclusions in \eqref{Tafva.2222} are implied by 
\eqref{AA-lm-jLL-HM-RN.ppp.wsa.2}-\eqref{exist:u2.b}. 

In the second part of the proof, make the additional assumption that $L$ is reflection invariant  
and assume now that $u$ is as in \eqref{jk-lm-jhR-LLL-HM-RN.w.BIS}. As before, we first work under 
the additional hypothesis made in \eqref{Tafva.2222.XXX}. Granted this, all equalities in \eqref{Ua-eDPa4.LBBw.XXX}
continue to be valid. Most notably, while now $u$ satisfies the weaker integrability property in 
\eqref{jk-lm-jhR-LLL-HM-RN.w.BIS}, the fact that $L$ is reflection invariant ensures that the Green function 
enjoys the stronger property listed in \eqref{bouMNN.XXX}. In concert, these ultimately allow us to invoke 
the integration by parts formula \eqref{Ua-eDPa4.L}. Everything else in \eqref{Ua-eDPa4.LBBw.XXX} is justified as before, 
given the compatibility between the Green functions constructed in Theorems~\ref{ta.av-GGG.2A}-\ref{FS-prop.INTR}.

To dispense with the additional hypothesis made in \eqref{Tafva.2222.XXX}, we employ the same weak-$\ast$ argument as before, 
with the caveat that in place of \eqref{eq:16t44.iii.D+M} we now choose the weight 
\begin{equation}\label{eq:16t44.iii.D+M.222}
\begin{array}{c}
\text{$\omega:{\mathbb{R}}^{n-1}\to(0,\infty)$ defined as} 
\\[4pt]
\text{$\omega(x'):=(1+|x'|^{n})^{-1}$ for each $x'\in{\mathbb{R}}^{n-1}$}.
\end{array}
\end{equation}
Such a choice works well with the last condition in \eqref{jk-lm-jhR-LLL-HM-RN.w.BIS}, since it guarantees that   
\eqref{eq:16t44.iii} continues to be presently valid. This opens the door for applying the weak-$\ast$ convergence result 
from Lemma~\ref{ydadHBB} to the sequence $\big\{f_\varepsilon\big\}_{\varepsilon>0}\subset 
L^1\big({\mathbb{R}}^{n-1}\,,\,\omega{\mathscr{L}}^{n-1}\big)$ and conclude that there exists 
$f\in L^1\big({\mathbb{R}}^{n-1}\,,\,\omega{\mathscr{L}}^{n-1}\big)$ 
along with a sequence $\{\varepsilon_j\}_{j\in{\mathbb{N}}}\subset(0,\infty)$ which converges to zero such that 
\begin{equation}\label{eq:16t44.MAD.D+M}
\lim_{j\to\infty}\int_{{\mathbb{R}}^{n-1}}\varphi(y')f_{\varepsilon_j}(y')\frac{dy'}{1+|y'|^{n}}
=\int_{{\mathbb{R}}^{n-1}}\varphi(y')f(y')\frac{dy'}{1+|y'|^{n}}
\end{equation}
for every bounded continuous function $\varphi$ in ${\mathbb{R}}^{n-1}$. Recall the estimate for the 
Poisson kernel recorded in \eqref{eq:16t44.iagG}. This presently implies that  
for each fixed point $(x',t)\in{\mathbb{R}}^n_{+}$ the assignment 
\begin{equation}\label{Fvabbb-7tF.Tda.D+M}
\begin{array}{c}
\displaystyle
{\mathbb{R}}^{n-1}\ni y'\mapsto\varphi(y'):=(1+|y'|^{n})P^L_t(x'-y')\in{\mathbb{C}}^{M\times M}
\\[6pt]
\text{is a bounded continuous function in ${\mathbb{R}}^{n-1}$}.
\end{array}
\end{equation}
At this stage, from \eqref{AA-lm-jLL-HM-RN.ppp.wsa} and \eqref{eq:16t44.MAD.D+M} 
used for the function $\varphi$ defined in \eqref{Fvabbb-7tF.Tda.D+M} we obtain 
(in view of the continuity of $u$ in ${\mathbb{R}}^n_{+}$) that the Poisson integral representation 
formula from \eqref{AA-lm-jLL-HM-RN.ppp.wsa.2} is presently true. 
Granted this, and since we now actually have $L^1\big({\mathbb{R}}^{n-1}\,,\,\omega{\mathscr{L}}^{n-1}\big)
=L^1\Big({\mathbb{R}}^{n-1}\,,\,\displaystyle\frac{dx'}{1+|x'|^{n}}\Big)$, we may rely on  
Proposition~\ref{thm:existence} to once again conclude \eqref{exist:u2.b}.
This establishes the first property in \eqref{Tafva.2222.BIS}. 

The second property in \eqref{Tafva.2222.BIS} follows from the first and the second line in \eqref{jk-lm-jhR-LLL-HM-RN.w.BIS}
(bearing in mind that $u\big|^{{}^{\kappa-{\rm n.t.}}}_{\partial\mathbb{R}^{n}_{+}}$ is a measurable function, as seen from 
\eqref{exist:u2.b}). The final property in \eqref{Tafva.2222.BIS} is clear from \eqref{exist:u2.b} and \eqref{AA-lm-jLL-HM-RN.ppp.wsa.2}.
\end{proof}

We conclude this section with the proof of Corollary~\ref{Them-Gen}.

\vskip 0.08in
\begin{proof}[Proof of Corollary~\ref{Them-Gen}]
Generally speaking, for each $m\geq 0$ the membership 
$f\in{\mathscr{Z}}_{m}$ entails $f\in L^1\big({\mathbb{R}}^{n-1}\,,\,\tfrac{dx'}{1+|x'|^{m}}\big)$,
since $f$ is measurable and $|f|\leq{\mathcal{M}}f$ a.e. in ${\mathbb{R}}^{n-1}$. Thus, for each $m\geq 0$ we have a 
continuous embedding
\begin{equation}\label{eq:iuwtrGajq.D+M}
{\mathscr{Z}}_{m}\hookrightarrow L^1\big({\mathbb{R}}^{n-1}\,,\,\tfrac{dx'}{1+|x'|^{m}}\big).
\end{equation}

As far as the boundary value problem \eqref{jk-lm-jhR-LLL-HM-RN.w.BVP} is concerned, a solution is obtained 
by defining $u(x):=(P^L_t\ast f)(x')$ for each $x=(x',t)\in{\mathbb{R}}^n_{+}$. 
Since $f\in[{\mathscr{Z}}_{n-1}]^M$, the embedding in \eqref{eq:iuwtrGajq.D+M} (with $m:=n-1$) entails 
$f\in\Big[L^1\big({\mathbb{R}}^{n-1}\,,\,\tfrac{dx'}{1+|x'|^{n-1}}\big)\big]^M$. Granted this, Proposition~\ref{thm:existence}
guarantees that $u$ is a meaningfully defined smooth function in ${\mathbb{R}}^n_{+}$, satisfying $Lu=0$ in ${\mathbb{R}}^n_{+}$ and
\begin{align}\label{hytftrdtrfahafa767fd}
\int_{\mathbb{R}^{n-1}}\big({\mathcal{N}}_{\kappa}u\big)(x')\,\frac{dx'}{1+|x'|^{n-1}}
&\leq C\int_{\mathbb{R}^{n-1}}\big({\mathcal{M}}f\big)(x')\,\frac{dx'}{1+|x'|^{n-1}}
\nonumber\\[4pt]
&=C\|f\|_{[{\mathscr{Z}}_{n-1}]^M}<\infty.
\end{align}
These properties show that $u$ is indeed a solution of \eqref{jk-lm-jhR-LLL-HM-RN.w.BVP}.
The fact that this solution is unique then follows from \eqref{GHUNNDs}.

Under the additional assumption that $L$ is also reflection invariant, the well-posedness of the boundary value problem 
\eqref{jk-lm-jhR-LLL-HM-RN.w.BVP.D+M} is justified in a similar fashion, with uniqueness now guaranteed by \eqref{GHUNNDs.D+M}.
\end{proof}


\begin{thebibliography}{99999}
\parskip=0.1cm


\bibitem{ADNI} S.\,Agmon, A.\,Douglis, and L.\,Nirenberg, {\it Estimates near the
boundary for solutions of elliptic partial differential equations satisfying general
boundary conditions, I}, Comm. Pure Appl. Math., 12 (1959), 623--727.
%
\bibitem{ADNII} S.\,Agmon, A.\,Douglis, and L.\,Nirenberg, {\it Estimates near the
boundary for solutions of elliptic partial differential equations satisfying general
boundary conditions, II}, Comm. Pure Appl. Math., 17 (1964), 35--92.
%
\bibitem{ABR} S.\,Axler, P.\,Bourdon, and W.\,Ramey, {\it Harmonic Function Theory}, 2nd edition,
Graduate Texts in Mathematics, Vol.\,137, Springer-Verlag, New York, 2001.
%
\bibitem{BMMM} K.\,Brewster, D.\,Mitrea, I.\,Mitrea, and M.\,Mitrea, 
{\it Extending Sobolev functions with partially vanishing traces from locally 
$(\varepsilon,\delta)$-domains and applications to mixed boundary problems}, J. Funct. Anal., 266 (2014), 4314--4421.
%
\bibitem{DK21} H.\,Dong and S.\,Kim, {\it Green's function for nondivergence elliptic operators in two dimensions}, 
Siam J. Math. Anal., 53 (2021), no.~4, 4637--4656.
%
\bibitem{GCRF85} J.\,Garc{\'\i}a-Cuerva and J.\,Rubio de Francia, {\it Weighted Norm
Inequalities and Related Topics}, North Holland, Amsterdam, 1985.
%
\bibitem{HK07} S.\,Hofmann and S.\,Kim, {\it The Green function estimates for strongly elliptic systems of second order},
Manuscripta Math., 124 (2007), 139--172.
%
\bibitem{HoMiTa07} S.\,Hofmann, M.\,Mitrea, and M.\,Taylor, {\it Geometric and transformational 
properties of Lipschitz domains, Semmes-Kenig-Toro domains, and other classes of finite perimeter 
domains}, J. Geom. Anal., 17 (2007), no.~4, 593--647.
%
\bibitem{HoMiTa10} S.\,Hofmann, M.\,Mitrea and M.\,Taylor, {\it Singular integrals and elliptic 
boundary problems on regular Semmes-Kenig-Toro domains}, International Mathematics Research Notices, 
Oxford University Press, 2010 (14), 2567--2865.
%
\bibitem{HK20} S.\,Hwang and S.\,Kim, {\it Green's function for second order elliptic equations in non-divergence form}
Potential Analysis, 52 (2020), 27--39.
%
\bibitem{KM2012} G.\,Kresin and V.\,Maz'ya, {\it Maximum Principles and Sharp Constants for Solutions
of Elliptic and Parabolic Systems}, Mathematical Surveys and Monographs, Vol.~183, American Mathematical 
Society, Providence, 2012.
%
\bibitem{KMR2} V.A.\,Kozlov, V.G.\,Maz'ya and J.\,Rossmann, {\it Spectral Problems 
Associated with Corner Singularities of Solutions to Elliptic Equations}, AMS, 2001.
%
\bibitem{LaMi24} M.\,Laurel and M.\,Mitrea, {\it Weighted Morrey Spaces: Calder\'on-Zygmund Theory and 
Boundary Problems}, De Gruyter Studies in Mathematics Vol.\,99, 2024.
%
\bibitem{Lop} Ya.B.\,Lopatinski\u{\i}, {\it On a method of reducing boundary value problems for systems 
of differential equations of elliptic type to regular integral equations}, (Russian) Ukrain. Mat. 
\v{Z}urnal, 5 (1953), no.~2, 123--151. 
%
\bibitem{MMMMM-Fatou} J.J.\,Mar{\'\i}n, J.M.\,Martell, D.\,Mitrea, I.\,Mitrea, and M.\,Mitrea,
{\it A Fatou theorem and Poisson's integral representation formula for elliptic systems in the upper-half space}, 
pp.\,105--124 in ``Topics in Clifford Analysis'', Special Volume in Honor of Wolfgang Spr\"oßig, 
S.\,Bernstein editor, Birkh\"auser, 2019,
%
\bibitem{MMMMM} J.J.\,Mar\'{i}n, J.\,Mar{\'\i}a Martell, D.\,Mitrea, I.\,Mitrea, and M.\,Mitrea, 
{\it Singular Integrals, Quantitative Flatness, and Boundary Problems}, Progress in Mathematics, Vol.~344, Birkh\"auser, 2022.
%
\bibitem{MMMM16} J.M.\,Martell, D.\,Mitrea, I.\,Mitrea, and M.\,Mitrea, {\it The Dirichlet 
problem for elliptic systems with data in K\"othe function spaces}, Revista Mat. Iberoamericana, 
Vol.~32 (2016), 913--970.
%
\bibitem{MaMiSh} V.\,Maz'ya, M.\,Mitrea, and T.\,Shaposhnikova, {\it The Dirichlet
problem in Lipschitz domains with boundary data in Besov spaces for higher order 
elliptic systems with rough coefficients}, Journal d'Analyse Math\'ematique, 
110 (2010), no.\,1, 167--239.
%
\bibitem{DMit18} D.\,Mitrea, {\it Distributions, Partial Differential Equations,
and Harmonic Analysis}, Second edition, Springer Nature Switzerland, 2018.
%
\bibitem{GHA.I} D.\,Mitrea, I.\,Mitrea, M.\,Mitrea, 
{\it Geometric Harmonic Analysis\,-\,Volume~I: A Sharp Divergence Theorem with Nontangential Pointwise Traces}, 
Developments in Mathematics, Vol.\,72, Springer, 2022.
%
\bibitem{GHA.II} D.\,Mitrea, I.\,Mitrea, M.\,Mitrea, 
{\it Geometric Harmonic Analysis\,-\,Volume~II: Function Spaces Measuring Size and Smoothness on Rough Sets}, 
Developments in Mathematics, Vol.\,73, Springer, 2022.
%
\bibitem{GHA.III} D.\,Mitrea, I.\,Mitrea, M.\,Mitrea, 
{\it Geometric Harmonic Analysis\,-\,Volume~III: Integral Representations, Calder\'{o}n-Zygmund Theory, Fatou Theorems, 
and Applications to Scattering}, Developments in Mathematics, Vol.\,74, Springer, 2023.
%
\bibitem{GHA.IV} D.\,Mitrea, I.\,Mitrea, M.\,Mitrea, 
{\it Geometric Harmonic Analysis\,-\,Volume~IV: Boundary Layer Potentials in Uniformly Rectifiable Domains, 
and Applications to Complex Analysis}, Developments in Mathematics, Vol.\,75, Springer, 2023.
%
\bibitem{GHA.V} D.\,Mitrea, I.\,Mitrea, M.\,Mitrea, 
{\it Geometric Harmonic Analysis\,-\,Volume~V: Fredholm Theory and Finer Estimates for Integral Operators, 
with Applications to Boundary Problems}, Developments in Mathematics, Vol.\,76, Springer, 2023.
%
\bibitem{MiTak24} M.\,Mitrea and P.\,Takemura, \textit{Singular Integrals, Herz-type Function Spaces, and Boundary Problems}, book manuscript 2024.
%
\bibitem{OKB15} K.A.\,Ott, S.\,Kim, and R.M.\,Brown, {\it The Green function for the mixed problem for the linear Stokes system in domains in the plane}, 
Mathematische Nachrichten, 288 (2015), no.~4, 452--464.
%
\bibitem{Shap} Z.Ya.\,Shapiro, {\it The first boundary value problem for an elliptic system 
of differential equations}, (Russian) Mat. Sb., 28 (1951), no.~1, 55--78. 
%
\bibitem{Sol} V.A.\,Solonnikov, {\it Estimates for solutions of general boundary
value problems for elliptic systems}, Doklady Akad. Nauk. SSSR, 151 (1963), 783--785
(Russian). English translation in Soviet Math., 4 (1963), 1089--1091.
%
\bibitem{Sol1} V.A.\,Solonnikov, {\it General boundary value problems
for systems elliptic in the sense of A. Douglis and L. Nirenberg. I},
(Russian) Izv. Akad. Nauk SSSR, Ser. Mat., 28 (1964), 665--706.
%
\bibitem{Sol2} V.A.\,Solonnikov, {\it General boundary value problems for
systems elliptic in the sense of A. Douglis and L. Nirenberg. II}, (Russian)
Trudy Mat. Inst. Steklov, 92 (1966), 233--297.
%
\bibitem{St70} E.M.\,Stein, {\it Singular Integrals and Differentiability
Properties of Functions}, Princeton Mathematical Series, No.\,30,
Princeton University Press, Princeton, NJ, 1970.
%
\bibitem{Stein93} E.M.\,Stein, {\it Harmonic Analysis: Real-Variable Methods,
Orthogonality, and Oscillatory Integrals}, Princeton Mathematical Series,
Vol.\,43, Monographs in Harmonic Analysis, III, Princeton University Press,
Princeton, NJ, 1993.
%
\bibitem{StWe71} E.M.\,Stein and G.\,Weiss, {\it Introduction to Fourier Analysis
on Euclidean Spaces}, Princeton Mathematical Series, Princeton University Press, 
Princeton, NJ, 1971.
%
\bibitem{TKB12} J.L.\,Taylor, S.\,Kim, and R.M.\,Brown, {\it The Green function for elliptic systems in two dimensions}, 
Communications in Partial Differential Equations, 38(9) (2012), 1574--1600.
%
\end{thebibliography}
\end{document}